\documentclass[a4paper,11pt]{article}
\usepackage[latin1]{inputenc}
\usepackage[T1]{fontenc}
\usepackage[british,UKenglish,USenglish,american]{babel}
\usepackage{amsmath}
\usepackage{tikz}
\usetikzlibrary{decorations.markings}
\usepackage{mathtools}
\usepackage{amsfonts}
\usepackage{hyperref}
\usepackage{enumitem}
\usepackage{hyperref}
\usepackage{comment}
\usepackage[T1]{fontenc}
\usepackage[latin1]{inputenc}
\usepackage{ntheorem}
\usetikzlibrary{arrows.meta, positioning}
\usepackage{theorem}
\usepackage[normalem]{ulem}
\usepackage{graphics}
\usepackage{makeidx}
\usepackage{fancyhdr}
\usepackage{hyperref}
\usepackage{tikz}
\usepackage{todonotes}
\usepackage{verbatim}
\usepackage{bbm}
\usepackage{bm}
\usepackage{amssymb}
\usepackage{enumitem}
\usepackage{color}
\usepackage{relsize}
\numberwithin{equation}{section}
\usepackage{amssymb}
\newcommand{\be}{\begin{equation}}
	\newcommand{\ee}{\end{equation}}
\newcommand{\benn}{\begin{equation*}}
	\newcommand{\eenn}{\end{equation*}}
\newcommand{\bea}{\begin{eqnarray}}
	\newcommand{\eea}{\end{eqnarray}}

\newcommand{\beann}{\begin{eqnarray*}}
	\newcommand{\eeann}{\end{eqnarray*}}

\newtheorem{theorem}{Theorem}[section]
\newtheorem{proposition}[theorem]{Proposition}
\newtheorem{corollary}[theorem]{Corollary}
\newtheorem{lemma}[theorem]{Lemma}

\newtheorem{definition}[theorem]{Definition}
\theorembodyfont{\rm}
\newtheorem{remark}[theorem]{Remark}
\newtheorem{notation}[theorem]{Notation}

\newtheorem{example}[theorem]{Example}
\newtheorem{assumptions}[theorem]{Assumption}

\newcommand{\proof}{\noindent{\bf Proof.\, }}
\newcommand{\qed}{\hfill $\Box$\smallskip\newline}
\newcommand{\E}{\noindent{$\mathbb{E}$ \ }}

\def\R{\mathbb{R}}

\def\N{\mathbb{N}}
\def\Z{\mathbb{Z}}
\def\P{\mathbb{P}}
\def\E{\mathbb{E}}
\def\T{\mathbb{T}}

\def\P{\mathbb{P}}
\def\cA{\mathcal{A}}
\def\cB{\mathcal{B}}

\def\cF{\mathcal{F}}

\def\cH{\mathcal{H}}

\def\cM{\mathcal{M}}

\def\cQ{\mathcal{Q}}
\def\cR{\mathcal{R}}

\def\cX{\mathcal{X}}

\def\txtd{{\textnormal{d}}}

\allowdisplaybreaks
\newcommand*\samethanks[1][\value{footnote}]{\footnotemark[#1]}

\title{Synchronization by noise for stochastic differential equations driven by fractional Brownian motion}
\author{Alexandra Blessing Neam\c tu~\thanks{  Department of Mathematics and Statistics, University of Konstanz,
		Universit\"atsstra\ss{}e~10, 78464 Konstanz, Germany. E-Mail: alexandra.blessing@uni-konstanz.de,  mazyar.ghani-varzaneh@uni-konstanz.de}~~~~and~~~~Mazyar Ghani Varzaneh \samethanks}
\begin{document}
	\maketitle
	\begin{abstract}
		We investigate synchronization by noise for stochastic differential equations (SDEs) driven by a fractional Brownian motion (fbm) with Hurst index $H\in(0,1)$. Provided that the SDE has a negative top Lyapunov exponent, we show that a weak form of synchronization occurs. To this aim we use tools from stochastic dynamical systems, random dynamical systems and  characterize the support of an invariant measure of a random dynamical system in a non-Markovian setting.
		
	\end{abstract}
	\tableofcontents
	{\bf Keywords:} stochastic dynamical system, random dynamical system, fractional Brownian motion, negative Lyapunov exponent, synchronization by noise.\\
	{\bf MSC (2020):}  60G22, 60F99, 37H30.
	\section{Introduction} 
	The main goal of this work is to prove synchronization by noise for stochastic differential equations (SDEs) driven by fractional Brownian motion (fbm). In particular, we consider SDEs with additive noise
	\begin{align}\label{sde:intro}
		\begin{cases}
			& \txtd Y_ t = F(Y_t)~\txtd t + \sigma\txtd B^H_t\\
			&Y_0=x\in \R^d,
		\end{cases}
	\end{align}
	where $d\geq 1$, $(B^H_t)_{t\geq 0}$ is a $d$-dimensional fractional Brownian motion with Hurst index $H\in(0,1)$, the drift term $F$ satisfies a suitable dissipativity and monotonicity assumption and $\sigma$ denotes the intensity of the noise.\\
	
	Synchronization means that trajectories of \eqref{sde:intro} starting from different initial data converge in a certain sense to a single trajectory given the same realization of the noise.
	In the language of random dynamical systems, this property can be viewed as convergence towards a unique random equilibrium point (called singleton attractor). In the Markovian case, a powerful tool for the analysis of the long-time behavior of \eqref{sde:intro} is to associate to the random dynamical system a Markov semigroup and relate its invariant measure to a statistical equilibrium, see \cite{CF94} and Remark \ref{RESFSS}. This is one of the main challenges which one has to overcome in the non-Markovian setting.  \\
	
	Referring to \cite{FGS16a}, synchronization occurs if there exists a weak attractor (as introduced in Subsection \ref{sec:p:attractors}) for the random dynamical system generated by \eqref{sde:intro} consisting of a single point.~As argued in \cite{FGS16a}, it is meaningful to weaken this concept and impose the existence of a minimal weak point attractor consisting of a single point. This is called weak synchronization and can be verified under natural assumptions on the underlying dynamical system established in \cite{FGS16a,FGS17}. In the Markovian case, if the associated Markov semigroup is strongly mixing, there is a connection between its invariant measure and the weak point attractor \cite{FGS16a} which is not available in our setting.  \\
	
	In this work, we also focus on a modified concept of weak synchronization tailored to the non-Markovian dynamics induced by \eqref{sde:intro}. To this aim, we follow the recent approach developed in \cite{BNGH26} that
	constructs a random dynamical system (RDS) given a stochastic dynamical system (SDS) as introduced in~\cite{Hai05}.~In this framework, \cite{BNGH26} established that \eqref{sde:intro} has a negative top Lyapunov exponent when increasing the noise intensity $\sigma$. Here we go one step further and prove that the trajectories of \eqref{sde:intro} converge in a certain sense to a random fixed point independently of the initial datum.~To the best of our knowledge, this is the first result on synchronization by noise for SDEs driven by fbm in dimensions higher than one, i.e.~beyond the order-preserving case investigated in~\cite[Subsection 4.1]{FGS17}.~Notably, our concept of weak synchronization is natural in a non-Markovian setting but is slightly different than the one defined in \cite{FGS16a,FGS17}.~As already stated, this means that the minimal weak point attractor of the RDS associated to~\eqref{sde:intro} is a singleton. \\

	In the following, we briefly explain our concept of synchronization and its connection to \cite{FGS16a,FGS17}. To this aim, we first recall that a major step for the analysis of non-Markovian dynamics was introduced in~\cite{Hai05}, where it was shown that enlarging the state space incorporating the noise, one can obtain a Markov process on the extended space. Using tools from Markov processes, one can establish the existence of a unique invariant measure on this extended space. We refer to \cite{Hai05, HO07, HP11, HP13} for more details on this procedure and further results. Under certain assumptions, the projection of this measure onto the original state space is mixing, which was used in \cite{FGS16a} to prove synchronization for one-dimensional SDEs driven by fbm. Moreover, a key feature of the projected measure is that it admits a density with respect to Lebesgue measure and satisfies Gaussian-type upper and lower bounds \cite{LPS23}. Such bounds were used in \cite{BNGH26} to prove that the top Lyapunov exponent of \eqref{sde:intro} is negative. \\
	
	In this work, we follow a complementary approach. We do not consider the projected measure but instead we disintegrate it with respect to the Wiener measure. This leads to a family of random measures $\{\mu_\omega\}_{\omega\in\Omega}$, (where $(\Omega,\cF,\P)$ is a suitable probability space) on the original state space which are shown to be invariant under the underlying dynamics.~These random measures can be viewed as analogues of statistical equilibria in the Markovian setting.~When the top Lyapunov exponent of~\eqref{sde:intro} is negative, we show that these measures are discrete similar to the Markovian situation discussed in \cite{FGS16a}. For a strongly mixing, white-noise RDS, it is known that the family $\{\mu_\omega\}_{\omega\in\Omega}$ constitutes a weak point attractor as established in \cite{FGS16a}. In the absence of the Markov property, we cannot prove an analogous result.~However we are able to show an averaged weak attraction in Proposition~\ref{POINT} which seems to be natural in a non-Markovian setting.~Furthermore, we show that the support of these measures is contained in the weak point attractor.~This motivates us to introduce the following formal notion of synchronization, see Definition \ref{DEF:WEAK} for the precise statement. 
	\begin{definition}\label{def:intro}
		We say that weak synchronization occurs if there exists a random point $b:\Omega\to \R^d$ such that 
		\[ \text{\em supp}(\mu_\omega) = \{ b(\omega) \}, \hspace*{1 mm} \P-a.e. \hspace*{1 mm}  \omega\in\Omega. \]
	\end{definition}

	\paragraph{Main result}
	At an informal level, our main result Theorem  \ref{WESYN} reads as follows.
	
	\begin{theorem}\label{main:info}
		The SDE \eqref{sde:intro} exhibits weak synchronization in the sense of Definition \ref{def:intro} provided that $\sigma$ is large enough and $F$ satisfies Assumption \ref{AASQw98a}.
	\end{theorem}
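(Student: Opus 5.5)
The plan is to follow the classical Markovian route of \cite{FGS16a}, replacing the Markov semigroup by the disintegrated random measures $\{\mu_\omega\}_{\omega\in\Omega}$, and to proceed in three stages.

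\textbf{Stage 1: construct $\mu_\omega$.} Starting from the stochastic dynamical system of \cite{Hai05}, I take the unique invariant measure $\mu$ on the extended space $\R^d\times\mathcal{W}$, where $\mathcal{W}$ is the space of past noise increments. I disintegrate $\mu$ with respect to the Wiener measure on the noise component. Via the RDS built from the SDS in \cite{BNGH26}, this yields a measurable family $\omega\mapsto\mu_\omega$. Its invariance $\varphi(t,\omega)_*\mu_\omega=\mu_{\theta_t\omega}$ follows from the invariance of $\mu$ for the extended Markov process and the cocycle property. One point needs care: $\mu_\omega$ must be measurable with respect to the past $\sigma$-algebra $\cF_{-\infty}^0$, so the random measure is a past-measurable invariant measure. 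Ergodicity of $\mu$ (it is unique) transfers to ergodicity of $\mu_\omega$ as an invariant measure of the skew product.

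\textbf{Stage 2: show $\mu_\omega$ is a Dirac mass.} I use the negative top Lyapunov exponent from \cite{BNGH26} for $\sigma$ large. By the multiplicative ergodic theorem and Ruelle's stable manifold theorem, the measure $\mu_\omega\otimes\P$ has, for a.e.\ $(x,\omega)$, a local stable ball $B(x,\delta(x,\omega))$ contracted exponentially onto the orbit of $x$. Exactly as in \cite[Prop.~2.20]{FGS16a}, this implies that $\mu_\omega$ is discrete with finitely many atoms of equal mass, say $N$ of them. To reduce $N$ to $1$, I need a contraction event on which two distinct points are brought together.

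For that step I use the monotonicity/dissipativity in Assumption \ref{AASQw98a} together with large $\sigma$ and the support of fBm. Fix two atoms $a_1(\omega)\neq a_2(\omega)$. By the Cameron--Martin/support theorem for fBm, conditionally on the past, the future noise can follow any smooth control with positive probability. I choose a control that steers both trajectories into a region where $F$ is strictly contracting, for example by driving them far out where dissipativity dominates. After time $T$ the distance is reduced by a factor below $1$, uniformly, with positive conditional probability. Iterating this, and using ergodicity of $\theta$ and the fact that the minimal distance between atoms is an invariant random variable in law, I would contradict $N\ge2$. Here I would look for a minimal pairwise distance $d(\omega)$ satisfying $d(\theta_t\omega)\le d(\omega)$ with strict decrease on a positive-probability set, which is impossible for a stationary quantity unless $N=1$.

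\textbf{Stage 3: identify the atom.} The single atom $b(\omega)$ gives $\mathrm{supp}(\mu_\omega)=\{b(\omega)\}$, which is Definition \ref{DEF:WEAK}.

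The main obstacle is the conditional support argument in Stage 2. Unlike white noise, the future of fBm is not independent of the past, and $\mu_\omega$ depends on the past. I would try to handle this by decomposing $B^H$ on $[0,T]$ into a past-measurable smooth part plus an independent Volterra-type part, the Mandelbrot--van Ness representation. I would then apply the support theorem only to the independent part, treating the smooth past-dependent drift as a perturbation that large $\sigma$ and dissipativity can absorb.
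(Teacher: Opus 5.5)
Stage~1 and the discreteness part of Stage~2 match the paper (Lemma~\ref{RAND_MEASURE}, Proposition~\ref{descrte}). The step that reduces the number of atoms to one, however, has a genuine gap.

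\textbf{The contradiction mechanism fails.} The inequality $d(\theta_t\omega)\le d(\omega)$ is false. Assumption~\ref{Drift} only gives $\langle F(x)-F(y),x-y\rangle\le C_3^F|x-y|^2$ with $C_3^F>0$, and the dissipative bound contracts only at distances above $\sqrt{C_1^F/C_2^F}$. So atoms sitting in $B(0,R)$ can separate at exponential rate $C_3^F$.

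Without monotonicity, a contraction by a fixed factor on an event of positive probability is fully compatible with stationarity of $d$, because expansion at other times can undo it. ``Iterating'' such events via ergodicity or recurrence only shows that favourable stretches recur. That gives at best $\liminf_{n}\mathcal{R}(P_n(\omega^-,\omega^+))=0$. Lemma~\ref{SYNCH}, or any stationarity contradiction, needs the distance to actually tend to zero on a set of positive probability.

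This is exactly the control/support-theorem route the authors sketch in Section~\ref{outlook} and leave open. In addition, the ``uniform'' positive conditional probability you invoke is not available. The atom positions and the past-dependent part of $B^H$ on $[0,T]$ are unbounded past-measurable quantities. Enlarging $\sigma$ scales that part exactly like the independent part, so it cannot be ``absorbed''.

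\textbf{What is missing is a quantitative count of contracting versus expanding time.} This is how the paper proceeds.
\begin{enumerate}
\item Dissipativity and the invariance of $\mu_{\omega^-}\times\mu_{\omega^-}$ give a deterministic bound $\max_{i\neq j}|b_i(\omega^-)-b_j(\omega^-)|\le C$, with $C=\sqrt{C_1^F/C_2^F}+\epsilon$ (Lemma~\ref{JHJHJH}, Corollary~\ref{ATRTASs}).
\item Hence, as soon as one atom lies outside $\overline{B(0,R+C)}$, all atoms lie in $\{|x|\ge R\}$. There the eventual monotonicity of Assumption~\ref{AASQw98a} contracts at rate $C_4^F$. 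This yields $\mathcal{R}(P_n(\omega^-,\omega^+))\le \mathcal{R}(\omega^-)\exp\bigl(-C_4^F\mu_{\mathrm{Leb}}(\mathcal{N}(n,\omega))+C_3^F\mu_{\mathrm{Leb}}(\mathcal{M}(n,\omega))\bigr)$ (Lemma~\ref{ESTSTSY}).
\item Birkhoff's theorem for the ergodic $\theta_1$ bounds the asymptotic fraction of expanding time by $\pi(A_1)$ (Lemmas~\ref{BIRK} and~\ref{BNassssaq}).
\item Therefore $\mathcal{R}(P_n)\to0$ almost surely under \eqref{AHSyss}, and stationarity yields $\mathcal{R}=0$ (Theorem~\ref{WESYN}).
\end{enumerate}
Large $\sigma$ enters precisely to guarantee \eqref{AHSyss}, not through a support theorem. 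After rescaling $Z=\|\sigma\|^{-1}Y$, uniform Gaussian upper bounds on the rescaled invariant density make $\pi^\sigma(\overline{B(0,R+C)})$ arbitrarily small (Proposition~\ref{EXASA}). No conditional support theorem is needed.
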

	
	In this setting we are able to show that \eqref{sde:intro} has a negative top Lyapunov exponent as argued in \cite{BNGH26}. The optimality of the assumptions on $F$ and $\sigma$ ensuring the negativity of the top Lyapunov exponent will be investigated in a future work, in particular an interplay between $F$ and $\sigma$ leading to synchronization, as well as the speed of convergence towards the equilibrium depending on the Hurst parameter $H\in(0,1)$. \\
	
	The proof of Theorem~\ref{main:info} relies on the structure of the disintegrated measures together with the assumption on the drift term which ensures a contraction on large sets.~In particular, the negativity of the top Lyapunov exponent implies that these measures are atomic, as justified in Proposition~\ref{descrte}.~This fact together with the assumption on the drift ensures that the distance between the points in their support is bounded by a deterministic constant, as summarized in Corollary~\ref{ATRTASs}. Furthermore, we impose an assumption on the drift which implies that trajectories of~\eqref{sde:intro} synchronize outside large balls. 
	In order to control the possible expansion within bounded sets, we impose a natural condition requiring that the total mass of the projected invariant measure of~\eqref{sde:intro} in $\mathbb{R}^d$ is large on the complements of these sets. 
	This condition can be verified increasing the intensity of the noise. 
	
	\paragraph{Literature} 
	
	The phenomenon of synchronization (by noise) was investigated for numerous stochastic systems in finite and infinite-dimensions ranging from SDEs on compact manifolds \cite{Baxendale} to singular SPDEs~\cite{GT20,Tommaso}.~A well-known framework for the investigation of synchronization by noise is given by order-preserving, strongly mixing RDS.~This setting has been analyzed e.g.~in~\cite{DP,AC,C,CS,FGS17} and the references specified therein.~For more details on synchronization and an overview of the available results we refer to~\cite{BS}.~We mention that order-preservation holds under restrictive conditions on the drift term in dimensions higher than one.~The tools developed in this manuscript are applicable beyond order-preserving white-noise RDS.~Notably, as already mentioned, synchronization attracted lots of interest also in the context of singular stochastic partial differential equations.~For example~synchronization by noise for the $\Phi^4_2$ and $\Phi^4_3$ equation on the torus was derived in \cite{GT20}.~Moreover, synchronization for the KPZ equation on the torus (referred to as one-force one-solution principle) was proved in \cite{Tommaso}.~We also mention other approaches to synchronization by noise based on large deviations which consider a small noise regime~\cite{T,BeGe:13,BW:17}.\\

	Finally, we point out that synchronization can provide a pathwise coupling mechanism used to establish ergodicity.~However, even for Markovian systems, proving ergodicity only via synchronization is technically challenging. Consequently, many works establish ergodicity using alternative coupling methods \cite{MHS11, E16, KSH18}.~The non-Markovian nature of the systems leads to additional technical difficulties~\cite{Hai05}.\\

Moreover, there has been a growing interest in the dynamics of stochastic systems with fractional Brownian motion or other types of non-Markovian noise. In particular, stationary solutions for SDEs driven by fbm together with Gaussian bounds for their densities have been established in \cite{LPS23} whereas rates of convergence of fractional SDEs towards their equilibria and related aspects have been investigated in  \cite{CP11, CPT14, FP17, DPT19, PTV20}.~Furthermore, several results on fractional stochastic dynamics such as averaging~\cite{averaging}, sample path estimates~\cite{Katharina,Nils}, Lyapunov exponents \cite{BNGH26}, finite-time Lyapunov exponents~\cite{DB}, amplitude equations \cite{AE}, early warning signs for bifurcations~\cite{EWS} or traveling waves~\cite{Stannat} have been established. For results on invariant manifolds and stability, we refer to~\cite{KN23, GVR25, GVR26} and to \cite{GVR25B} for delay equations.~Ergodicity for singular SDEs with fractional Brownian motion was recently proved in \cite{Avi}.~Motivated by these recent developments, we constructed in \cite{BNGH26} random dynamical systems given stochastic dynamical systems generated by SDEs with additive fractional Brownian motion.~This framework enabled us to show in a certain regime that the top Lyapunov exponent of such an SDE is negative.~This assumption is heavily used in this work to show synchronization.~The complementary regime in which the top Lyapunov exponent is positive, is expected to lead to chaotic behavior of the systems together with random measures exhibiting SRB-type properties.~This will be analyzed in a forthcoming work.  \\

\paragraph{Structure of the paper} 
In Section \ref{sec:p} we collect basic concepts from stochastic dynamical systems introduced in \cite{Hai05}, invariant measures, fractional Brownian motion, random dynamical systems and attractors which are used to define (weak) synchronization.~Moreover, we state some results established in~\cite{BNGH26} on the generation of an RDS given an SDS.~In Section \ref{sec:3} we thoroughly discuss the concepts of invariant measures and attractors for RDS. A particular interest is given by the support of the disintegrated invariant measure of a stochastic dynamical system constructed in Lemma \ref{RAND_MEASURE} provided that the top Lyapunov exponent of the underlying system is negative. For Markovian systems, it is known that the corresponding measure is atomic, see \cite[Lemma 2.19]{FGS17}. We establish in Proposition \ref{descrte} an analogue result using a stable manifold theorem. Furthermore, we introduce in Subsection \ref{RAAT} a concept of a random attractor for non-Markovian systems, which we call P-attractor. This depends only on the past of the noise and is motivated by the theory of SDS which filter out the future of the noise. In Theorem \ref{ATTRRP} we provide the existence of such an attractor for the RDS generated by \eqref{sde:intro}. 
Section \ref{sec:main} contains the main result on synchronization by noise for \eqref{sde:intro}. Since in the Markovian case, there exists a one-to-one correspondence between invariant measures of RDS and pullback attractors \cite{CF94}, we investigate such questions for the disintegrated measure of the SDS and the P-attractor. 
Therefore, Subsection \ref{SUPPAS} provides a connection between the support of the invariant measure of the RDS and the introduced weak point attractor. Namely, we show that the support of this measure is contained in the P-attractor, see Proposition \ref{YASHAsd} for a precise statement together with an averaged weak attraction in Proposition~\ref{POINT}.~Based on these insights, Theorem \ref{WESYN} establishes synchronization by fractional noise. 
As already mentioned, one essential ingredient for this argument is that the support of the invariant measure of the RDS contains finitely many discrete points and their distance is always bounded from above by a deterministic constant.~This holds due to the assumptions imposed on the drift, particularly a contraction property on large sets, and due to the additive structure of the noise. We emphasize that the tools for synchronization developed in this work are not restricted to order-preserving random dynamical systems.~Section \ref{sec:ex} provides an example of an SDE for which weak synchronization holds and Section \ref{outlook} points out some related open problems which will be considered in a forthcoming work.~We conclude with Appendix \ref{appendix}, which contains a result from measure theory required for the proof of Corollary \ref{STABSS}.

\subsection*{Acknowledgements}
\label{sec:acknowledgements}
The authors acknowledge funding by the Deutsche Forschungsgemeinschaft (DFG, German Research Foundation) - CRC/TRR 388 "Rough Analysis, Stochastic Dynamics and Related Fields" - Project ID 516748464.

\section{Preliminaries}\label{sec:p}
\subsection{Notations}
In this section, we introduce some notations, definitions and auxiliary results which are frequently used throughout the manuscript.
\begin{lemma}\label{pushforward}
	Let \((X, \sigma(X))\) and \((Y, \sigma(Y))\) be two measurable spaces and let \(T: X \to Y\) be a measurable mapping.  
	Suppose that \(\tilde{\mu}\) is a Borel measure defined on \((X, \sigma(X))\).  
	The pushforward measure of \(\tilde{\mu}\) under \(T\), denoted by \(T_{\ast}\tilde{\mu}\), is defined on \(Y\) by
	\[
	T_{\ast}\tilde{\mu}(E) := \tilde{\mu}(T^{-1}(E)), \qquad \forall E \in \sigma(Y).
	\]
	Furthermore, for every measurable function \(f: Y \to \mathbb{R}^{+}\), the following identity holds
	\[
	\int_{X} f(T(x)) \, \tilde{\mu}(\mathrm{d}x) 
	= \int_{Y} f(y) \, T_{\ast}\tilde{\mu}(\mathrm{d}y).
	\]
\end{lemma}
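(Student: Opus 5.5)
The plan is the standard measure-theoretic bootstrap from indicator functions to nonnegative measurable functions. First I check that $T_\ast\tilde\mu$ is a measure on $(Y,\sigma(Y))$.

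For $E\in\sigma(Y)$, measurability of $T$ gives $T^{-1}(E)\in\sigma(X)$, so $T_\ast\tilde\mu(E)$ is well defined. Moreover $T_\ast\tilde\mu(\emptyset)=\tilde\mu(\emptyset)=0$. Countable additivity follows because preimages preserve disjointness and unions: if $(E_n)$ are pairwise disjoint, then the sets $T^{-1}(E_n)$ are pairwise disjoint and $T^{-1}(\bigcup_n E_n)=\bigcup_n T^{-1}(E_n)$. Countable additivity of $\tilde\mu$ then gives that of $T_\ast\tilde\mu$.

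Next I prove the integral identity in stages.

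\begin{itemize}
\item \emph{Indicators.} For $f=\mathbbm{1}_E$ with $E\in\sigma(Y)$ we have $f\circ T=\mathbbm{1}_{T^{-1}(E)}$. Both sides equal $\tilde\mu(T^{-1}(E))=T_\ast\tilde\mu(E)$ by definition.
\item \emph{Simple functions.} For nonnegative simple $f=\sum_{i} a_i\mathbbm{1}_{E_i}$ with $a_i\ge 0$, the identity follows by linearity of the integral on both sides.
\item \emph{General nonnegative $f$.} Let $f:Y\to\R^+$ be measurable. Choose simple functions $0\le f_n\uparrow f$ pointwise on $Y$, for instance the standard dyadic truncations. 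Then $f_n\circ T$ are simple, measurable on $X$, and satisfy $f_n\circ T\uparrow f\circ T$ pointwise on $X$. Applying the monotone convergence theorem on both sides passes the identity for $f_n$ to the limit. This covers the case where both sides equal $+\infty$.
\end{itemize}

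There is no real obstacle. The only point needing care is the final limiting step: the approximating sequence on $X$ must be exactly $f_n\circ T$, so that the two monotone limits correspond term by term, and the measurability of $f\circ T$ must be noted, which holds as a composition of measurable maps.
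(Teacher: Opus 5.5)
Your proof is correct. The paper states this lemma without proof, treating it as the standard change-of-variables formula for image measures, and your argument (countable additivity via preimages, then indicators, nonnegative simple functions, and monotone convergence applied to $f_n\circ T\uparrow f\circ T$) is exactly the standard proof it implicitly relies on.
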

\begin{definition}
	Let $(\Omega,\mathcal{F},\mathbb{P})$ be a probability space. 
	The space of random Lipschitz functions denoted by \(\mathrm{BL}_{\Omega}(\mathbb{R}^d) \) consists of all functions \( g : \mathbb{R}^d \times \Omega \rightarrow \mathbb{R} \) satisfying the following properties.
	\begin{itemize}
		\item For every \( \omega \in \Omega \), the function \( x \mapsto g(x,\omega) \) is continuous.
		\item For every \( x \in \R^d \), the function \( \omega \mapsto g(x,\omega) \) is measurable.
		\item There exists a constant \( M > 0 \) such that on a set of full measure in \( \Omega \) 
		\begin{align*}
			\| g(\cdot, \omega) \|_{\mathrm{BL}} < M,
		\end{align*}
		where
		\[
		\| g(\cdot, \omega) \|_{\mathrm{BL}} := \sup_{x \in \mathbb{R}^d} |g(x,\omega)| + \sup_{\substack{x, y \in \mathbb{R}^d \\ x \neq y}} \frac{|g(x,\omega) - g(y,\omega)|}{|x - y|}.
		\]
	\end{itemize}
	Furthermore 
	\[
	\mathrm{Pr}_{\Omega}(\mathbb{R}^d) := \left\{ \nu \text{ is a probability measure on } \mathbb{R}^d \times \Omega \,\middle|\, \left( \Pi_{\Omega} \right)_{\ast} \nu = \mathbb{P} \right\},
	\]
	where $\Pi$ denotes the projection onto $\Omega$. 
\end{definition}
\subsection{Random dynamical systems}\label{sec:p:attractors}
We begin by recalling the basic definitions of the theory of random dynamical systems~\cite{Arn98}, which is largely inspired by the framework of deterministic dynamical systems and ergodic theory.
\begin{definition}\label{SAGBAsd}
	Let \((\Omega, \mathcal{F})\) be a measurable space, and let \(\T\) denote either \(\mathbb{R}\) or \(\mathbb{R}^+\), each equipped with its Borel \(\sigma\)-algebra.  
	A family of mappings \(\theta = \{\theta_t\}_{t \in \T}\) on \(\Omega\) is called a \emph{measurable dynamical system} if the following conditions hold.
	\begin{itemize}
		\item The mapping \((t, \omega) \mapsto \theta_t(\omega)\) is measurable with respect to \(\sigma(\T) \otimes \mathcal{F}\) and \(\mathcal{F}\).
		\item The identity condition is satisfied at \(t=0\), i.e., \(\theta_0 = \mathrm{Id}\).
		\item The semigroup property holds: \(\theta_{s+t} = \theta_s \circ \theta_t\) for all \(s, t \in \T\).
	\end{itemize}
	If there exists a probability measure \(\mathbb{P}\) on \((\Omega, \mathcal{F})\) such that
	\[
	(\theta_t)_{\ast}\mathbb{P} = \mathbb{P}, \quad \forall\, t \in \T,
	\]
	then the quadruple \((\Omega, \mathcal{F}, \mathbb{P}, \theta)\) is called a \emph{measurable metric dynamical system}.  
	Furthermore, \((\Omega, \mathcal{F}, \mathbb{P}, \theta)\) is said to be {ergodic}, if for every \(t \in \T \setminus \{0\}\), the transformation \(\theta_t\) is ergodic, meaning that for any \(A \in \mathcal{F}\),
	\[
	\mathbb{P}\big(A \triangle \theta_t^{-1}(A)\big) = 0
	\quad \Rightarrow \quad 
	\mathbb{P}(A) \in \{0,1\}.
	\]
\end{definition}
Now, we can define the concept of a random dynamical system.
\begin{definition}\label{Olas5d}
	Let \(\mathcal{X}\) be a Polish space equipped with its Borel \(\sigma\)-algebra \(\sigma(\mathcal{X})\).  
	An {(ergodic) measurable random dynamical system} on \((\mathcal{X}, \sigma(\mathcal{X}))\) consists of an (ergodic) measurable metric dynamical system \((\Omega, \mathcal{F}, \mathbb{P}, \theta)\) together with a measurable function
	\[
	\Phi : \mathbb{R}^+ \times \mathcal{X} \times \Omega \to \mathcal{X},
	\]
	satisfying the {cocycle property}.  
	More specifically, for each \((t, x, \omega) \in \mathbb{R}^+ \times \mathcal{X} \times \Omega\), set \(\Phi^t_\omega(x) := \Phi(t,x,\omega)\).  
	Then the following conditions hold:
	\[
	\Phi^0_\omega = \operatorname{Id}_{\mathcal{X}} \quad \text{for all } \omega \in \Omega,
	\]
	and
	\[
	\Phi^{t+s}_\omega(x) = \Phi^s_{\theta_t \omega}\big(\Phi^t_\omega(x)\big),
	\]
	for all \(s, t \in \mathbb{R}^+\), \(x \in \mathcal{X}\), and \(\omega \in \Omega\).  
	The function \(\Phi\) is called a \emph{cocycle}.  
	Given the condition that, for each \(\omega \in \Omega\), the map \((t, x) \mapsto \Phi(t, x, \omega)\) is continuous, the function \(\Phi\) is called a {continuous random dynamical system}.  
	Moreover, for \(k \geq 1\), we call \(\Phi\) a {\(C^k\)-random dynamical system} if for every \(t \geq 0\) and \(\omega \in \Omega\), the function
	\[
	x \mapsto \Phi^t_\omega(x)
	\]
	is \(C^k\)-Fr\'{e}chet differentiable.
\end{definition}
\begin{definition}\label{ARTASS}
	Let \((\Omega, \mathcal{F}, \mathbb{P}, \theta)\) be a measurable metric dynamical system, and let \(\mathcal{X}\) be a Polish space.
	\begin{itemize}
		\item A random variable \(Y : \Omega \rightarrow (0, \infty)\) is called \emph{tempered} if
		\begin{align*}
			\limsup_{|t| \to \infty,\, t \in \mathbb{T}} \frac{1}{|t|} \log^{+} Y(\theta_t \omega) = 0,
			\quad \text{for } \mathbb{P}\text{-almost every } \omega \in \Omega,
		\end{align*}
		where \(\log^{+} x := \max\{ \log x, 0 \}\).
		
		\item Let \(\mathcal{E}=\{\mathcal{E}(\omega)\}_{\omega \in \Omega}\) be a family of non-empty subsets of \(\mathcal{X}\) such that, for every \(x \in \mathcal{X}\), the function \(\omega \mapsto d(x, \mathcal{E}(\omega))\) is measurable, and with probability one each \(\mathcal{E}(\omega)\) is bounded in \(\mathcal{X}\). We call \(\mathcal{E}\) a {tempered set} if the random variable
		\[
		r(\mathcal{E}(\omega)) := \sup_{x \in \mathcal{E}(\omega)} d(x, x_0)
		\]
		is tempered for some fixed reference point \(x_0 \in \mathcal{X}\). The collection of tempered sets in $\mathcal{X}$ is denoted by $\mathcal{D}(\mathcal{X})$.
	\end{itemize}
\end{definition}

\begin{remark}\label{TEMOER}
	According to \cite[Theorem~4.1.3]{Arn98}, the random variable \(Y : \Omega \rightarrow (0, \infty)\) is tempered if
	\begin{align*}
		\mathbb{E}\left[\sup_{t \in \mathbb{T} \cap [0,1]} Y(\theta_t \omega)\right] < \infty.
	\end{align*}
\end{remark}
An important topic in the theory of random dynamical systems is the concept of pullback attractors. Before stating it, we need the following definition. 
\begin{definition}
	Let $\Phi$ be an RDS and \( \mathcal{A}=\{\mathcal{A}(\omega)\}_{\omega \in \Omega} \) be a family of non-empty subsets of  $\mathcal{X}$. 
	\begin{enumerate}
		\item  [1)] The family \( \mathcal{A} \) is called a random closed set (random compact set) if, with probability one, each \( \mathcal{A}(\omega) \) is closed (compact), and for every \( x \in \mathcal{X} \), the function \( \omega \mapsto d(x, \mathcal{A}(\omega)) \) 
		is \( \mathcal{F} \)-measurable.
		\item [2)] We say that $\mathcal{A}$ is invariant under \( \Phi \) if for all \( t \geq 0 \),
		\[
		\Phi^{t}_{\omega}(\mathcal{A}(\omega)) = \mathcal{A}(\theta_t\omega) 
		\]
		holds for \( \mathbb{P} \)-almost every \( \omega \in \Omega \).
		\item [3)] The set $\mathcal{A}$ is called {random absorbing} if, 
		for every $\mathcal{E} \in \mathcal{D}(\mathcal{X})$, there exists a (random) time 
		$T = T(\mathcal{E}, \omega)$ such that
		\[
		\varphi(t, \theta_{-t}\omega, \mathcal{E}(\theta_{-t}\omega)) \subset \mathcal{A}(\omega),
		\quad \forall\, t \ge T(\mathcal{E}, \omega),
		\]
		for $\mathbb{P}$-almost every $\omega \in \Omega$.
		
	\end{enumerate}
\end{definition} 
Now we can give the definition of a pullback attractor.
\begin{definition}\label{ATTRACTOR}
	Let $\Phi$ be an RDS. A random compact set $\cA$ is { called pullback attractor} if 
	\begin{enumerate}
		\item [1)] $\cA$ is invariant under \( \Phi \).
		\item [2)] For every compact set $K\subseteq  \mathcal{X}$ 
		we have
		\[ \lim\limits_{t\to\infty} \sup\limits_{x\in K} d(\Phi^t_{\theta_{-t}\omega}(x), \cA(\omega)  )=0~~~\P-a.s. \]
	\end{enumerate}
	We call $\cA$ a {weak attractor} if the convergence in 2) holds in probability instead of almost surely. The set $\mathcal{A}$ is called a {(weak) point attractor} if it satisfies the properties stated above, assuming that the compact set $K$ in condition 2) consists of a single point.~A (weak) point attractor is said to be {minimal} if it is contained in every other (weak) point attractor.	%
\end{definition}
\begin{remark}
	\begin{itemize}
		\item Obviously, every pullback attractor is a weak attractor but the converse is not true, see \cite{S:Attr}.
		\item In \cite[Definition~3.9]{CF94}, the second property in Definition~\ref{ATTRACTOR} is stated under the stronger assumption that $K$ is bounded. When $\mathcal{X}$ is a finite-dimensional Banach space, these two concepts are  equivalent.
	\end{itemize}
\end{remark}
For more details on the construction and properties of random attractors and related concepts, in particular where the set $K$ above is replaced by a random bounded or compact set, we refer to \cite{CF94, FS96, Gess1, Gess2, Andre}.~Finally, we point out the following well-known fact for the weak point attractor of a white-noise RDS in the Markovian case.    

\subsection{Stochastic dynamical systems}
We now introduce the basic definitions of the theory of stochastic dynamical systems developed in \cite{Hai05}, which has a mostly probabilistic flavor.
\begin{definition}\label{SDS}
	Let $\mathcal{B}$ be a Polish space equipped with the Borel $\sigma$-algebra $\sigma(\mathcal{B})$. 
	A quadruple $\left(\mathcal{B}, \{P_t\}_{t \geq 0}, \mathbf{P}, \{\vartheta_t\}_{t \geq 0}\right)$ is called a {stationary noise process} if the following properties hold.
	\begin{itemize}
		\item [(i)]\label{I} The measure $\mathbf{P}$ is a probability measure on $(\mathcal{B}, \sigma(\mathcal{B}))$. Also, for each $t \geq 0$, the maps 
		$P_t: \mathcal{B} \times \mathcal{B} \to \mathcal{B}$ and 
		$\vartheta_t: \mathcal{B} \to \mathcal{B}$ 
		are measurable, and for every $\omega^- \in \mathcal{B}$, the random variables 
		$P_t(\omega^-, \cdot)$ and $\vartheta_t$ are independent.
		\item [(ii)]\label{II} The family $\{\vartheta_t\}_{t \geq 0}$ forms a semiflow on $\mathcal{B}$, i.e. 
		$\vartheta_{t+s} = \vartheta_t \circ \vartheta_s$, and preserves $\mathbf{P}$, that is $(\vartheta_t)_{\star}\mathbf{P} = \mathbf{P}$.
		\item [(iii)]\label{III} For $\omega^- \in \mathcal{B}$ and $t \geq 0$, define
		\[
		\mathcal{P}_t(\omega^-; \cdot) := (P_t(\omega^-, \cdot))_{\star}(\mathbf{P}).
		\]
		The family $\{\mathcal{P}_t\}_{t \geq 0}$ forms a Feller transition semigroup on $\mathcal{B}$.
		Moreover, $\mathbf{P}$ is the unique invariant measure of $\{\mathcal{P}_t\}_{t \geq 0}$.
		\item [(iv)]\label{IV} For all $\omega^-, \omega^+ \in \mathcal{B}$ and $t,s \geq 0$,
		\begin{align*}
			&\vartheta_t \circ P_t(\omega^-, \omega^+) 
			= P_t(\vartheta_t \omega^-, P_t(\omega^+, \omega^-)) = \omega^-,\\
			&P_{t+s}(\omega^-, \omega^+) 
			= P_t(P_s(\omega^-, \omega^+), \vartheta_s \omega^+).
		\end{align*}
	\end{itemize}
\end{definition}
This leads to the following definition.
\begin{definition}\label{SDSSS}
	Let $\left(\mathcal{B}, \{P_t\}_{t \geq 0}, \mathbf{P}, \{\vartheta_t\}_{t \geq 0}\right)$ be a stationary noise process, and let $\mathcal{X}$ be a Polish space. 
	A map
	\begin{align}
		\begin{split}
			&\varphi: \mathbb{R}^+ \times \mathcal{X} \times \mathcal{B} \to \mathcal{X},\\
			&(t, x, \omega) \mapsto \varphi^t_{\omega}(x),
		\end{split}
	\end{align}
	is called a {continuous stochastic dynamical system} if
	\begin{itemize}
		\item For every $T > 0$, $\omega \in \mathcal{B}$, and $x \in \mathcal{X}$,
		\[
		\tilde{\varphi}_T(x,\omega)(t) := \varphi^{t}_{\vartheta_{T - t}\omega}(x), \quad t \in [0, T],
		\]
		lies in $C([0, T], \mathcal{X})$, and $(x,\omega) \mapsto \tilde{\varphi}_T(x,\omega)$ 
		is continuous from $\mathcal{X} \times \mathcal{B}$ to $C([0, T], \mathcal{X})$.
		
		\item For all $t,s \geq 0$, $\omega \in \mathcal{B}$, and $x \in \mathcal{X}$,
		\begin{align}\label{cocycle_SDS}
			\begin{split}
				&\varphi^{0}_{\omega}(x) = x,\\
				&\varphi^{t+s}_{\omega}(x)
				= \varphi^{s}_{\omega}\big(\varphi^{t}_{\vartheta_{s}\omega}(x)\big).
			\end{split}
		\end{align}
		
		\item If $\mathcal{X}$ is a separable Banach space, the SDS is called $C^k$ if for all $k \geq 1$, $t \geq 0$, and $\omega \in \mathcal{B}$, 
		the map $x \mapsto \varphi^t_{\omega}(x)$ is $C^k$-Fr\'echet differentiable.
	\end{itemize}
\end{definition}
We now focus on the invariant measure of a continuous SDS. The following definition is taken from \cite{Hai05}.
\begin{definition}\label{DEAFRS}
	Let $\varphi$ be a continuous SDS. For $t \geq 0$ and $\Gamma \in \sigma(\mathcal{X}) \otimes \sigma(\mathcal{B})$, define
	\begin{align}\label{Chapman}
		\mathcal{Q}_{t}\left((x,\omega^-); \Gamma\right) 
		:= \int_{\mathcal{B}} 
		\chi_{\Gamma}\!\left( 
		\varphi^{t}_{P_{t}(\omega^-, \omega^+)}(x), 
		P_{t}(\omega^-, \omega^+) 
		\right) 
		\mathbf{P}(\mathrm{d}\omega^+).
	\end{align}
	The family $\{\mathcal{Q}_t\}_{t \geq 0}$ forms a Feller transition semigroup on $\mathcal{X} \times \mathcal{B}$.  
	
	A probability measure $\mu$ on $\mathcal{X} \times \mathcal{B}$ is called an \emph{invariant measure} for $\varphi$ if it satisfies:
	\begin{itemize}
		\item The marginal of $\mu$ on $\mathcal{B}$ coincides with $\mathbf{P}$, i.e.
		\begin{align}\label{UJas}
			(\Pi_\mathcal{B})_{\star}\mu = \mathbf{P}.
		\end{align}
		
		\item The measure $\mu$ is invariant under the semigroup $\{\mathcal{Q}_t\}_{t \geq 0}$, meaning that for all $t \geq 0$ and $\Gamma \in \sigma(\mathcal{X}) \otimes \sigma(\mathcal{B})$,
		\[
		(\mathcal{Q}_t)_{\star}\mu(\Gamma)
		:= \int_{\mathcal{X} \times \mathcal{B}}
		\mathcal{Q}_t\big((x,\omega); \Gamma\big)\,
		\mu(\mathrm{d}(x,\omega))
		= \mu(\Gamma).
		\]
	\end{itemize}
	We also call $\mu$ \emph{ergodic} if it is ergodic with respect to the shift map on 
	$C(\mathbb{R}^+, \mathcal{X} \times \mathcal{B})$; see \emph{\cite[Chapter~2]{DPZ96}} for further details.
\end{definition}
In the next result, we show how one can construct a natural RDS based on a given SDS. 
This construction also yields several consequences that are essential for our subsequent analysis.~The next statement is a summary of the results in~\cite{BNGH26}.

\begin{theorem}\label{RDS_SDS}
	Let $\left(\mathcal{B}, \{P_t\}_{t \geq 0}, \mathbf{P}, \{\vartheta_t\}_{t \geq 0}\right)$ be a stationary noise process and let \( \varphi \) be a continuous SDS in the sense of Definition~\ref{SDSSS}. Then the following statements hold. 
	\begin{enumerate}
		\item  Consider the probability space \((\Omega, \mathcal{F}, \mathbb{P})\), where 
		\[
		\Omega := \mathcal{B} \times \mathcal{B}, \quad 
		\mathcal{F} := \sigma(\mathcal{B}) \otimes \sigma(\mathcal{B}), \quad 
		\mathbb{P} := \mathbf{P} \times \mathbf{P}.
		\]
		Define \( \theta_t : \Omega \to \Omega \) by
		\begin{align}\label{semigr}
			\begin{split}
				\theta_t(\omega^-,\omega^+) :=
				\begin{cases}
					\left(P_{t}(\omega^-,\omega^+), \vartheta_{t}\omega^+\right), & \text{for } t \ge 0,\\[4pt]
					\left(\vartheta_{-t}\omega^-, P_{-t}(\omega^+, \omega^-)\right), & \text{for } t < 0.
				\end{cases}
			\end{split}
		\end{align}
		Then \((\Omega, \mathcal{F}, \mathbb{P}, \theta)\) constitutes a measurable metric dynamical system.
		\item Define
		\begin{align}
			\begin{split}
				&\Phi: \mathbb{R}^+ \times \mathcal{X} \times \Omega \to \mathcal{X},\\
				&(t, x, (\omega^-, \omega^+)) \longmapsto \Phi^{t}_{(\omega^-, \omega^+)}(x) := \varphi^{t}_{P_{t}(\omega^-, \omega^+)}(x).
			\end{split}
		\end{align}
		Then \( \Phi \) is a continuous RDS over \((\Omega, \mathcal{F}, \mathbb{P}, \theta)\). If \( \varphi \) is of class \( C^k \) for any \( k \ge 1 \), then the same regularity holds for \( \Phi \).
		
		\item Suppose that \( \varphi \) admits an invariant measure \( \mu \) in the sense of Definition~\ref{DEAFRS}. For every \( t \ge 0 \), define
		\begin{align*}
			&\Theta_{t}: \mathcal{X} \times \Omega \to \mathcal{X} \times \Omega ,\\
			&(x, \omega^-, \omega^+) \longmapsto \left(\Phi^{t}_{(\omega^-, \omega^+)}(x), P_{t}(\omega^-, \omega^+), \vartheta_{t}\omega^+\right).
		\end{align*}
		Then
		\begin{align}\label{MERTCIDY}
			\left(\mathcal{X} \times\Omega, \; 
			\sigma(\mathcal{X}) \otimes\mathcal{F}, \; 
			\mu \times \mathbf{P}, \; \Theta \right)
		\end{align}
		is a metric dynamical system.
		
		\item Let \(\mu\) be an ergodic invariant measure and suppose that for every \(t > 0\), the map \(\vartheta_t : \mathcal{B} \to \mathcal{B}\) is ergodic with respect to the measure \(\mathbf{P}\). 
		Then for every \(t > 0\), the map \(\Theta_t\) is ergodic with respect to \(\mu \times \mathbf{P}\).
		
		\item Suppose that \( \mathcal{X} \) is a separable Banach space and \( \varphi \) is a \( C^k \)-SDS for some \( k \ge 1 \). Define 
		\begin{align*}
			& \mathcal{T} : \mathbb{R}^{+} \times \mathcal{X} \times 
			\left(\mathcal{X} \times \Omega \right) \to \mathcal{X},\\
			&\left(t, y, (x, \omega^-, \omega^+)\right) \longmapsto 
			\mathcal{T}^{t}_{(x,\omega^-,\omega^+)}(y) := \mathrm{D}_{x}\Phi^{t}_{(\omega^-,\omega^+)}(y).
		\end{align*}
		Then \( \mathcal{T} \) is a linear continuous RDS over the metric dynamical system~\eqref{MERTCIDY}.
	\end{enumerate}
\end{theorem}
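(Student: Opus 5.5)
Most items of Theorem~\ref{RDS_SDS} reduce to the algebraic identities in Definition~\ref{SDS}(iv) and the cocycle property \eqref{cocycle_SDS}; the only genuinely probabilistic inputs are the independence in (i) and the invariance of $\mathbf{P}$ under $\mathcal{P}_t$ in (iii). The plan follows the order of the statement. \emph{Item 1.} I first check the group property of $\theta$. For $s,t\ge0$,
\[
\theta_s\theta_t(\omega^-,\omega^+)=\bigl(P_s(P_t(\omega^-,\omega^+),\vartheta_t\omega^+),\vartheta_{s+t}\omega^+\bigr),
\]
and the second identity in (iv) turns this into $\theta_{s+t}$. For $t\ge0$, the first identity of (iv) gives $\theta_{-t}\theta_t=\mathrm{Id}$ and $\theta_t\theta_{-t}=\mathrm{Id}$. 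Indeed, $\vartheta_tP_t(\omega^-,\omega^+)=\omega^-$, and $P_t(\omega^+... )$ recovers $\omega^+$ by the symmetric relation. Mixed signs then follow by composing. For invariance of $\mathbb{P}$ under $\theta_t$, $t\ge0$, take a product test function $f(\omega^-)g(\omega^+)$. The independence in (i) of $P_t(\omega^-,\cdot)$ and $\vartheta_t$ under $\mathbf{P}$ splits the integral into
\[
\int \mathcal{P}_t(\omega^-;f)\,\mathbf{P}(\txtd\omega^-)\cdot\int g\circ\vartheta_t\,\txtd\mathbf{P},
\]
which equals $\int f\,\txtd\mathbf P\int g\,\txtd\mathbf P$ by the invariance in (iii) and (ii). A monotone class argument extends this to all test functions. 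Invariance for negative $t$ follows because $\theta_{-t}=\theta_t^{-1}$. Measurability is inherited from that of $P_t$ and $\vartheta_t$.

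\emph{Item 2.} The cocycle property of $\Phi$ is the step where the indices must match exactly. I write $\omega=(\omega^-,\omega^+)$ and $\tilde\omega:=P_{t+s}(\omega^-,\omega^+)$. The SDS cocycle gives $\varphi^{t+s}_{\tilde\omega}=\varphi^s_{\tilde\omega}\circ\varphi^t_{\vartheta_s\tilde\omega}$. From (iv),
\[
\tilde\omega=P_s(P_t(\omega^-,\omega^+),\vartheta_t\omega^+),
\]
so $\varphi^s_{\tilde\omega}=\Phi^s_{\theta_t\omega}$. In addition, $\vartheta_s\tilde\omega=P_t(\omega^-,\omega^+)$ by the first identity in (iv) applied to the pair $(P_t(\omega^-,\omega^+),\vartheta_t\omega^+)$, so $\varphi^t_{\vartheta_s\tilde\omega}=\Phi^t_\omega$. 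Continuity in $(t,x)$ for fixed $\omega$ comes from the continuity of $\tilde\varphi_T$ in Definition~\ref{SDSSS}, after rewriting $\varphi^t_{P_t(\omega)}=\varphi^t_{\vartheta_{T-t}P_T(\omega)}$ via the same identities. $C^k$-regularity is inherited pointwise.

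\emph{Items 3--5.} For item 3 I would mirror item 1. By the definition \eqref{Chapman} of $\mathcal{Q}_t$, the image of $\mu\times\mathbf{P}$ under $\Theta_t$, projected to the first two coordinates, is $(\mathcal{Q}_t)_\star\mu=\mu$. The independence of $\vartheta_t\omega^+$ from $P_t(\omega^-,\omega^+)$ supplies the product structure with $\mathbf P$ in the last coordinate. The semiflow property of $\Theta$ then follows from items 1 and 2. Item 5 is the chain rule applied to the cocycle of item 2: differentiating $\Phi^{t+s}_\omega=\Phi^s_{\theta_t\omega}\circ\Phi^t_\omega$ gives $D\Phi^{t+s}_\omega(x)=D\Phi^s_{\theta_t\omega}(\Phi^t_\omega x)\,D\Phi^t_\omega(x)$, which is the linear cocycle property over $\Theta$.

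I expect item 4, ergodicity, to be the main obstacle. My plan is to take a $\Theta_t$-invariant set and show that it is determined, up to null sets, by its trace on $\mathcal X\times\mathcal B$ in the first two coordinates. I would argue via the martingale convergence theorem in $\omega^+$: $\Theta_t$ shifts $\omega^+$ by the ergodic $\vartheta_t$, and $\omega^+$ is independent of the first two coordinates. Once the set depends only on $(x,\omega^-)$, it corresponds to a shift-invariant set for the Markov process with transition semigroup $\{\mathcal Q_t\}$. Ergodicity of $\mu$ (Definition~\ref{DEAFRS}) then forces its measure to be $0$ or $1$.
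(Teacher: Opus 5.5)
Your verifications of items 1, 2, 3 and 5 are correct, and more explicit than the paper, which simply defers to \cite{BNGH26}. In particular, the group property and $\theta$-invariance of $\mathbb{P}$ from (i)--(iv) are right. So are the cocycle identity via $P_{s+t}(\omega)=P_s(P_t(\omega),\vartheta_t\omega^+)$ and $\vartheta_sP_s(a,b)=a$, continuity via $P_t(\omega)=\vartheta_{T-t}P_T(\omega)$, and the factorisation $(\Theta_t)_\star(\mu\times\mathbf{P})=(\mathcal{Q}_t)_\star\mu\times\mathbf{P}$. There are two small omissions. Joint measurability of $(t,\omega)\mapsto\theta_t\omega$ is not inherited from the per-$t$ measurability in Definition~\ref{SDS}. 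Also, continuity of $t\mapsto\mathrm{D}_x\Phi^t_\omega$, which is needed for $\mathcal{T}$ to be a continuous RDS, is not part of Definition~\ref{SDSSS}.

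The genuine gap is the last step of item 4. Fix $t>0$ and grant that a $\Theta_t$-invariant set equals $A'\times\mathcal{B}$ modulo null sets, with $A'\subseteq\mathcal{X}\times\mathcal{B}$. Integrating the invariance over $\omega^+$ yields only $\mathcal{Q}_t\chi_{A'}=\chi_{A'}$ $\mu$-a.e.\ for this single $t$ (and its multiples). That is invariance for the skeleton chain with kernel $\mathcal{Q}_t$. Ergodicity of $\mu$ in Definition~\ref{DEAFRS} is ergodicity of the stationary path-space process under the shift semigroup. It rules out only sets invariant under every $\mathcal{Q}_s$, $s\ge0$, so it does not imply ergodicity of a skeleton, and ergodicity of $\vartheta_t$ cannot make up for this.

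Here is a counterexample. Let $\mathcal{X}=\mathbb{R}/\mathbb{Z}$, take the fBm noise of Lemma~\ref{AJSJAsd}, and set $\varphi^t_\omega(x):=x+t \bmod 1$.
\begin{itemize}
\item This is a continuous SDS, and $\mu:=\mathrm{Leb}\otimes\mathbf{P}$ is invariant.
\item $\mu$ is ergodic for the continuous-time shift, since it is the product of a uniquely ergodic rotation flow with a mixing noise flow.
\item Every $\vartheta_t$ is mixing, hence ergodic.
\item Yet $E:=\{x\in[0,\tfrac12)\}\times\Omega$ satisfies $\Theta_1^{-1}(E)=E$ and $(\mu\times\mathbf{P})(E)=\tfrac12$.
\end{itemize}
So, with ergodicity understood as in Definition~\ref{DEAFRS}, ergodicity of each individual $\Theta_t$ does not follow from the stated hypotheses at all. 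You need an extra input: ergodicity of $\mu$ for the time-$t$ skeleton $\mathcal{Q}_t$. This could come, for example, from a mixing property of $\{\mathcal{Q}_s\}_{s\ge0}$, or from reading Definition~\ref{DEAFRS} as ergodicity of every time-$t$ shift.

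Separately, the role you assign to ergodicity of $\vartheta_t$ in the reduction step is misplaced; that step is independence plus martingale convergence. For bounded $h$ with $h\circ\Theta_t=h$, let $\mathcal{F}_n:=\sigma\bigl(x,\omega^-,P_{nt}(\omega^-,\omega^+)\bigr)$. This filtration increases because $P_{nt}(\omega)=\vartheta_tP_{(n+1)t}(\omega)$. Since $\vartheta_{nt}\omega^+$ is independent of $\mathcal{F}_n$ with law $\mathbf{P}$, you get $\mathbb{E}[h\mid\mathcal{F}_n]=\bar h\circ\Pi_{\mathcal{X}\times\mathcal{B}}\circ\Theta_{nt}$ with $\bar h(x,\omega^-):=\int h(x,\omega^-,z)\,\mathbf{P}(\txtd z)$. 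Invariance of $\mu\times\mathbf{P}$ then gives $\|h-\bar h\circ\Pi_{\mathcal{X}\times\mathcal{B}}\|_{L^1}=\|h-\mathbb{E}[h\mid\mathcal{F}_n]\|_{L^1}\to0$. This requires $\bigvee_n\mathcal{F}_n$ to generate $\omega^+$ up to null sets. That holds for the fBm noise but is not among the axioms of Definition~\ref{SDS}. Applied to sets invariant under all $\Theta_s$, your argument does correctly prove ergodicity of the semiflow $\{\Theta_s\}_{s\ge0}$. That is all Proposition~\ref{descrte} actually uses, since $\mathcal{H}$ there is invariant under every $\Theta_s$.
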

\proof
The proofs are stated in \cite{BNGH26}. The first two statements follow from Theorem 3.2 and Remark 3.3,  the third one from Corollary 3.5, and the last two claims follow from Corollaries 3.6 and 3.7.
\qed
The next result shows that an invariant measure of an SDS (which exists on $\cX\times \cB$) can be disintegrated, yielding a family of random measures on the state space \(\mathcal{X}\).
\begin{lemma}\label{RAND_MEASURE}
	Suppose that $\varphi$ is an SDS with an invariant measure $\mu$.
	Then, there exists a set $\tilde{\mathcal{B}} \subseteq \mathcal{B}$ of full measure and a family of random measures $\{{\mu}_{\omega^-}\}_{\omega^- \in \mathcal{B}}$ such that, for every $A \times C \in \sigma(\mathcal{X}) \otimes \sigma(\mathcal{B})$, the following properties hold:
	\begin{enumerate}
		\item ${\mu}_{\omega^-}(\mathcal{X}) = 1$ for $\mathbf{P}$-almost every $\omega^- \in \mathcal{B}$.
		\item The map $\omega^- \mapsto {\mu}_{\omega^-}(A)$ is measurable.
		\item The measure $\mu$ admits the disintegration
		\[
		\mu(A \times C) = \int_{C} {\mu}_{\omega^-}(A)\, \mathbf{P}(\mathrm{d}\omega^-).
		\]
		\item For all $(\omega^-, \omega^+) \in \tilde{\mathcal{B}} \times \mathcal{B}$,
		\begin{align}\label{eq:invariance}
			\begin{split}
				&P_t(\omega^-, \omega^+) \in \tilde{\mathcal{B}},\\
				&\left(\varphi_{P_t(\omega^-, \omega^+)}^{t}\right)_{\ast} {\mu}_{\omega^-}
				= {\mu}_{P_t(\omega^-, \omega^+)}.
			\end{split}
		\end{align}
		\item \label{PRPR} 
		For every $t \geq 0$,
		\begin{align}
			\mathbb{P}(\tilde{\mathcal{B}} \times \mathcal{B}) = 1, 
			\qquad  
			\theta_{t}(\tilde{\mathcal{B}} \times \mathcal{B}) \subseteq \tilde{\mathcal{B}} \times \mathcal{B}.
		\end{align}
		Moreover, for every $(\omega^-, \omega^+) \in \tilde{\mathcal{B}} \times \mathcal{B}$ and every $t \geq 0$,
		\begin{align*}
			(\Phi^{t}_{(\omega^-, \omega^+)})_{\star} \mu_{\omega^-} 
			= \mu_{P_{t}(\omega^-, \omega^+)}.
		\end{align*}
	\end{enumerate}
\end{lemma}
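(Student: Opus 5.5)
The disintegration is obtained from the standard theorem for measures on products of Polish spaces. The invariance property is then derived from the invariance of $\mu$ under $\mathcal{Q}_t$, combined with the structure of the stationary noise process.

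\textbf{Disintegration (items 1--3).} Since $\mathcal{X}$ and $\mathcal{B}$ are Polish and $(\Pi_{\mathcal{B}})_\star\mu=\mathbf{P}$ by \eqref{UJas}, the disintegration theorem (existence of regular conditional probabilities) provides a kernel $\omega^-\mapsto\mu_{\omega^-}$. It is measurable in $\omega^-$ for every Borel set $A$, satisfies $\mu_{\omega^-}(\mathcal{X})=1$ almost surely, and gives $\mu(A\times C)=\int_C\mu_{\omega^-}(A)\,\mathbf{P}(\mathrm{d}\omega^-)$. This settles items 1--3. The kernel is unique up to a $\mathbf{P}$-null set; this uniqueness is what we exploit below.

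\textbf{Invariance for fixed $t$.} Fix $t\ge0$ and insert the disintegration into $(\mathcal{Q}_t)_\star\mu=\mu$ via \eqref{Chapman}. For $\Gamma=A\times C$ this gives
\[
\int_C\mu_{\eta}(A)\,\mathbf{P}(\mathrm{d}\eta)=\int_{\mathcal{B}}\int_{\mathcal{B}}\chi_C\big(P_t(\omega^-,\omega^+)\big)\,\big(\varphi^t_{P_t(\omega^-,\omega^+)}\big)_\ast\mu_{\omega^-}(A)\,\mathbf{P}(\mathrm{d}\omega^+)\,\mathbf{P}(\mathrm{d}\omega^-).
\]
The key structural input is item (iv) of Definition~\ref{SDS}: $\vartheta_t P_t(\omega^-,\omega^+)=\omega^-$. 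Hence $\omega^-$ is a measurable function of $\eta:=P_t(\omega^-,\omega^+)$, and the map $\eta\mapsto(\varphi^t_\eta)_\ast\mu_{\vartheta_t\eta}$ is a kernel in the variable $\eta$ alone. Moreover, $\eta$ has law $\mathbf{P}$ under $\mathbf{P}\times\mathbf{P}$, by invariance of $\mathbf{P}$ under $\mathcal{P}_t$. Rewriting the right-hand side as $\int_C(\varphi^t_\eta)_\ast\mu_{\vartheta_t\eta}(A)\,\mathbf{P}(\mathrm{d}\eta)$ and invoking uniqueness of the disintegration, we get
\[
(\varphi^t_\eta)_\ast\mu_{\vartheta_t\eta}=\mu_\eta \quad\text{for }\mathbf{P}\text{-a.e. }\eta.
\]
Equivalently, $(\varphi^t_{P_t(\omega^-,\omega^+)})_\ast\mu_{\omega^-}=\mu_{P_t(\omega^-,\omega^+)}$ holds $\mathbf{P}\times\mathbf{P}$-almost surely, for each fixed $t$.

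\textbf{Upgrading to all $t$ and all $(\omega^-,\omega^+)\in\tilde{\mathcal{B}}\times\mathcal{B}$ (items 4--5).} This is the main obstacle. The previous step yields identities only almost surely and only for each fixed $t$, while item 4 demands a full-measure set $\tilde{\mathcal{B}}$ that is forward invariant under $P_t(\cdot,\omega^+)$ for \emph{every} $\omega^+$ and every $t$.

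My first attempt is to \emph{redefine} the kernel rather than shrink the set. Let $G_t$ be the full-measure set of $\eta$ on which the identity of the previous step holds. For rational $t$, take $G=\bigcap_{t\in\mathbb{Q}^+}G_t$. Then set $\tilde{\mathcal{B}}$ to be the set of $\eta$ such that $\vartheta_s\eta\in G$ for all rational $s$, using $\vartheta_s$-invariance of $\mathbf{P}$ so that this is still a full-measure set.

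The alternative is to define $\mu_\eta$ directly as a limit $(\varphi^t_\eta)_\ast\mu_{\vartheta_t\eta}$ along $t\to\infty$. By the almost-sure consistency this limit agrees with the original kernel almost everywhere, and by the cocycle property \eqref{cocycle_SDS} together with $\vartheta_sP_s(\omega^-,\omega^+)=\omega^-$ it is exactly invariant wherever it is defined. On the set where the limit exists, the cocycle identity makes the relation hold for all $(\omega^-,\omega^+)$ and all $t$, and it shows that $P_t(\omega^-,\omega^+)$ stays in that set. Passing from rational to real $t$ uses continuity of $t\mapsto\varphi^t$ from Definition~\ref{SDSSS} together with weak continuity of pushforwards. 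Outside $\tilde{\mathcal{B}}$ one may set $\mu_{\eta}$ to a fixed probability measure.

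Finally, item 5 follows directly. By \eqref{semigr} and the definition of $\Phi$ in Theorem~\ref{RDS_SDS}, $\theta_t(\omega^-,\omega^+)=(P_t(\omega^-,\omega^+),\vartheta_t\omega^+)$ and $\Phi^t_{(\omega^-,\omega^+)}=\varphi^t_{P_t(\omega^-,\omega^+)}$. So item 5 is a restatement of item 4, and $\mathbb{P}(\tilde{\mathcal{B}}\times\mathcal{B})=\mathbf{P}(\tilde{\mathcal{B}})=1$.
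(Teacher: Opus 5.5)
Your treatment of items 1--3 and of the fixed-time identity is correct. Inserting the disintegration into $(\mathcal{Q}_t)_\star\mu=\mu$, using $\vartheta_tP_t(\omega^-,\omega^+)=\omega^-$ and the fact that $P_t(\omega^-,\omega^+)$ has law $\mathbf{P}$, uniqueness of disintegrations gives $(\varphi^t_\eta)_\ast\mu_{\vartheta_t\eta}=\mu_\eta$ for $\eta$ outside a $\mathbf{P}$-null set $N_t$.

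\textbf{The gap is the upgrade to items 4--5.} This is the real content of the lemma, and neither of your attempts closes it.

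\emph{Why shrinking cannot work.} Item 4 demands the identity at every point of $\{P_t(\omega^-,\omega^+):\omega^+\in\mathcal{B}\}\subseteq\vartheta_t^{-1}(\{\omega^-\})$. This set is $\mathbf{P}$-null when $\mathbf{P}$ has no atoms. So item 4 is a statement about the kernel on null sets, and no choice of a full-measure set of $\omega^-$ can enforce it. Concretely, your set $\{\eta:\vartheta_s\eta\in G\ \forall s\in\mathbb{Q}^+\}$ shrinks rather than redefines, and it is not backward invariant: nothing controls $\eta=P_t(\omega^-,\omega^+)$ itself, nor $\vartheta_s\eta$ for $s<t$, nor for $s>t$ when $t\notin\mathbb{Q}$.

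\emph{Why the pointwise limit does not work.} Your second idea, $\mu_\eta:=\lim_{t\to\infty}(\varphi^t_\eta)_\ast\mu_{\vartheta_t\eta}$, does redefine the kernel, but a pointwise limit is the wrong device.
\begin{enumerate}
\item Along a countable set of times the limit exists almost surely, since the values are a.s.\ constant there. But at $\eta=P_t(\omega^-,\omega^+)$ the cocycle relation involves the times $t+m$, which leave that countable set unless $t$ belongs to it. So you get neither existence of the limit at $\eta$ nor invariance for general $t$.
\item Along all real $t$, existence cannot be deduced at all. You only have the identity off uncountably many $t$-dependent null sets $N_t$, and a merely measurable kernel can be altered on a null set that the orbit $t\mapsto\vartheta_t\eta$ hits at unboundedly many times.
\item The appeal to continuity fails for the same reason. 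The map $t\mapsto(\Phi^t_\omega)_\star\mu_{\omega^-}$ is continuous, but $t\mapsto\mu_{P_t(\omega^-,\omega^+)}$ need not be, because $\eta\mapsto\mu_\eta$ is only measurable.
\end{enumerate}

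\textbf{What is missing is the averaging step the paper's sketch refers to.} Replace the pointwise limit by an object that is insensitive to Lebesgue-null sets of times.
\begin{enumerate}
\item Put $\nu_u(\eta):=(\varphi^u_\eta)_\ast\mu_{\vartheta_u\eta}$. Your fixed-time identity and Fubini give, for $\mathbf{P}$-a.e.\ $\eta$, $\nu_u(\eta)=\mu_\eta$ for Lebesgue-a.e.\ $u\ge0$.
\item Let $\tilde{\mathcal{B}}$ be the set of $\eta$ for which $u\mapsto\nu_u(\eta)$ is Lebesgue-a.e.\ constant on some half-line $[T,\infty)$. Let $\hat\mu_\eta$ be this constant (a Ces\`aro-limit definition works as well).
\item By the cocycle property, $\nu_u(\eta)=(\varphi^t_\eta)_\ast\nu_{u-t}(\vartheta_t\eta)$ for $u\ge t$. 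Hence $\vartheta_t\eta\in\tilde{\mathcal{B}}$ implies $\eta\in\tilde{\mathcal{B}}$ and $\hat\mu_\eta=(\varphi^t_\eta)_\ast\hat\mu_{\vartheta_t\eta}$. This holds for every real $t\ge0$, with no exceptional set.
\item Since $\vartheta_tP_t(\omega^-,\omega^+)=\omega^-$, this is exactly item 4.
\item Moreover $\hat\mu=\mu$ $\mathbf{P}$-a.e., so items 1--3 persist, modulo routine measurability checks. Item 5 then follows as you describe.
\end{enumerate}
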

\proof 
We briefly sketch the main idea of the proof and refer to~\cite[Lemma~3.8 and Theorem~3.11]{BNGH26} for more details. First, we use the fact that the measure \(\mu\) can be disintegrated into a family of probability measures \(\{\tilde{\mu}_{\omega^-}\}_{\omega^- \in \mathcal{B}}\) on \(\mathcal{X}\). Then, we apply an averaging argument to modify this family of measures on a set $\tilde{\cB}\subset \cB$ of full measure on which the properties stated above hold. This argument is closely related to the perfection technique in the theory of random dynamical systems.
\qed
In this manuscript, we work with RDS generated by SDEs driven by fractional Brownian motion making their long-time behavior a challenging task.~We provide a short summary of some well-known concepts for Markovian dynamics for a better comprehension of our results. 

\begin{remark}\label{RESFSS}
	For SDEs driven by Brownian motion, the corresponding RDS denoted by $\overline{\varphi}$, 
	where $\overline{\varphi} \colon \mathbb{R}_+ \times \overline{\Omega} \times \R^d \to \R^d$ 
	and $\R^d$ is a separable Banach space, 
	is defined on a measurable metric dynamical system 
	$(\overline{\Omega}, \overline{\mathcal{F}}, \overline{\mathbb{P}}, \overline{\theta})$ 
	associated with the law of the Brownian motion.
	This RDS is a white-noise RDS (see \cite[Section~1.1]{CGS16} for the definition). 
	It is adapted to a suitable filtration and satisfies the usual independence and shift-invariance properties of the Brownian motion,
	see \cite[Section~1.1]{CGS16} for further details. 
	The RDS $\overline{\varphi}$ induces a Markovian (Feller) semigroup $(\overline{P}_t)_{t\geq 0}$ defined by
	\[
	\overline{P}_t f(x) := \overline{\mathbb{E}}\big[f\big(\overline{\varphi}(t, \cdot, x)\big)\big],
	\]
	for every bounded and measurable function $f:\R^d\to \R$. 
	If $(\overline{P}_t)_{t\geq 0}$ admits an invariant probability measure $\overline{\mu}$, 
	by the martingale convergence theorem 
	(see e.g.~\cite[Corollary~4.6]{CF94} and \cite[Chapter~5]{V14}), 
	the corresponding statistical equilibrium $\overline{\mu}_{\overline{\omega}}$ exists. 
	More precisely, for every sequence $t_k \to \infty$, the weak$^\ast$ limit
	\[
	\overline{\mu}_{\overline{\omega}} := 
	\lim_{t_k \to \infty} 
	\big(\overline{\varphi}(t_k, \overline{\theta}_{-t_k}\overline{\omega}, \cdot)\big)_{\ast} \overline{\mu}
	\]
	exists $\overline{\mathbb{P}}$-almost surely and does not depend on the particular choice of the sequence $(t_k)$. 
	Moreover, for every $t \ge 0$,
	\[
	\big(\overline{\varphi}(t, \overline{\omega}, \cdot)\big)_{\ast} \overline{\mu}_{\overline{\omega}} 
	= \overline{\mu}_{\overline{\theta}_t \overline{\omega}},
	\qquad 
	\overline{\mathbb{E}}\,\overline{\mu}_{\overline{\omega}} = \overline{\mu}.
	\]
	The correspondence between invariant measure of a Markov process and statistical equilibrium for the corresponding white-noise RDS does not hold in the non-Markovian case considered here. Therefore we follow the approach developed in Lemma \ref{RAND_MEASURE}. 
	
\end{remark}
\begin{remark}\label{ARATSasd}
	A Markovian semigroup $(\overline{P}_t)_{t \geq 0}$ with invariant measure $\overline{\mu}$ is called {strongly mixing} if
	\[
	\overline{P}_t f(x) \longrightarrow \int_{\cX} f(y)\, \overline{\mu}(dy)
	\quad \text{as } t \to \infty,
	\]
	for every bounded and continuous function $f$ and all $x \in \cX$.
	Similarly, an RDS $\overline{\varphi}$ is called strongly mixing if the law of $\overline{\varphi}(t, \cdot, x)$
	converges to $\overline{\mu}$ for $t \to \infty$ for all $x \in \cX$. For a strongly mixing, white-noise RDS, it is well-known that the family $\{ \overline{\mu}_{\overline{\omega}} \}_{\overline{\omega} \in \overline{\Omega}}$
	constitutes a weak point attractor.
	For more details, see \cite[Proposition~2.20]{FGS16a} and \cite[Theorem~2.4]{KS04}.
\end{remark}
\subsection{Fractional Brownian motion}
The primary motivation of the theory of stochastic dynamical systems is to analyze the long-time behavior of stochastic differential equations driven by fractional Brownian motion. Due to the Mandelbrot-van Ness representation, such equations naturally fall within this framework. We now briefly recall some  results mainly following \cite{Hai05} and \cite{BNGH26}.
\begin{definition}\label{BB_HH}
	For \( 0 < H < 1 \), let \( C_0^\infty(\mathbb{R}^-, \mathbb{R}^d) \) denote the space of smooth, compactly supported functions \( \omega: \mathbb{R}^- \to \mathbb{R}^d \) satisfying \( \omega(0) = 0 \). For \( \omega \in C_0^\infty(\mathbb{R}^-, \mathbb{R}^d) \), define
	\begin{align}\label{norm:BH}
		\Vert \omega \Vert_{\mathcal{B}_H}
		:= \sup_{s,t \in \mathbb{R}^-} 
		\frac{|\omega(t) - \omega(s)|}
		{\sqrt{1 + |t| + |s|}\, |t-s|^{\frac{1-H}{2}}}.
	\end{align}
	The space \( \mathcal{B}_H \) is defined as the closure of \( C_0^\infty(\mathbb{R}^-, \mathbb{R}^d) \) with respect to this norm and is a separable Banach space.
\end{definition}

We can now state the following result.

\begin{lemma}\label{AJSJAsd}
	Let \( H \in (0,1) \). For \( \omega \in C_0^\infty(\mathbb{R}^-, \mathbb{R}^d) \), define
	\begin{align}\label{AAS11}
		\mathcal{D}_H \omega(t)
		:= \frac{1}{\alpha_H} \int_{-\infty}^{0}
		(-r)^{H - \frac{1}{2}}
		\left( \dot{\omega}(t + r) - \dot{\omega}(r) \right)
		\, \mathrm{d}r, 
		\quad t \le 0.
	\end{align}
	This operator extends continuously from \(C_0^\infty(\mathbb{R}^-, \mathbb{R}^d) \) to \( \mathcal{B}_H \) and satisfies the following properties.
	\begin{itemize}
		\item The map
		\begin{align}\label{D_HHF}
			\mathcal{D}_H : \mathcal{B}_H \longrightarrow \mathcal{B}_{1-H}
		\end{align}
		admits a bounded inverse.
		\item There exists a unique Gaussian measure \( \mathbf{P} \) on \( \mathcal{B}_H \) such that the associated canonical process is a time-reversed Brownian motion.
		\item The canonical process associated with the pushforward measure \( (\mathcal{D}_H)_{\ast} \mathbf{P} \) on \( \mathcal{B}_{1-H} \) is a time-reversed fractional Brownian motion with Hurst parameter \( H \).
		\item The quadruple \( \left( \mathcal{B}_H, \{P_t\}_{t \ge 0}, \mathbf{P}, \{\vartheta_t\}_{t \ge 0} \right) \) forms a stationary noise process in the sense of Definition~\ref{SDS}, where:
		\begin{enumerate}
			\item the shift maps \( \{\vartheta_t\}_{t \ge 0} \) on \( \mathcal{B}_H \) are defined by
			\begin{align}\label{PPOOAS1}
				\vartheta_t \omega(s) := \omega(s - t) - \omega(-t), \quad s \le 0.
			\end{align}
			\item for every \( t \ge 0 \), the map \( P_t : \mathcal{B}_H \times \mathcal{B}_H \to \mathcal{B}_H \) is given by
			\begin{align}\label{PPOOAS}
				P_t(\omega^-, \omega^+)(s)
				:= 
				\begin{cases}
					\omega^+(-s - t) - \omega^+(-t), & -t \le s \le 0, \\[4pt]
					\omega^-(s + t) - \omega^+(-t), & s \le -t.
				\end{cases}
			\end{align}
		\end{enumerate}
	\end{itemize}
\end{lemma}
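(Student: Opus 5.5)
The plan is to follow Hairer's construction \cite{Hai05} and check each item of Lemma~\ref{AJSJAsd} in turn. Throughout, the Mandelbrot--van Ness kernel $(-r)^{H-\frac12}$ is treated as a fractional integration/differentiation operator of order $H-\frac12$, acting on the past of a two-sided Wiener path.

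\textbf{Step 1: well-posedness of $\mathcal{D}_H$.} For $\omega\in C_0^\infty(\mathbb{R}^-,\mathbb{R}^d)$, integrate \eqref{AAS11} by parts. This rewrites $\mathcal{D}_H\omega(t)$ as
\[
c_H\int_{-\infty}^0 (-r)^{H-\frac32}\bigl(\omega(t+r)-\omega(t)-\omega(r)\bigr)\,\mathrm{d}r .
\]
The aim is then the weighted H\"older bound $\|\mathcal{D}_H\omega\|_{\mathcal{B}_{1-H}}\lesssim \|\omega\|_{\mathcal{B}_H}$. To get it, estimate $|\mathcal{D}_H\omega(t)-\mathcal{D}_H\omega(s)|$ by splitting the $r$-integral into the region $|r|\le |t-s|$ and its complement. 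On the first region use the H\"older exponent $\frac{1-H}{2}$ of $\omega$. On the second use the growth weight $\sqrt{1+|t|+|s|}$ together with the decay of $(-r)^{H-\frac32}$. The exponents are designed so that these two pieces produce exactly the exponent $\frac{1-(1-H)}{2}=\frac H2$ and the same square-root weight. The bound then extends $\mathcal{D}_H$ by density to all of $\mathcal{B}_H$.

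\textbf{Step 2: bounded inverse.} The formula is symmetric under $H\mapsto 1-H$, and the composition of the kernels of orders $H-\frac12$ and $\frac12-H$ is the identity; this is the Riemann--Liouville semigroup property. I therefore expect $\mathcal{D}_{1-H}\circ\mathcal{D}_H=\mathrm{Id}$ on smooth functions, up to matching the normalising constants $\alpha_H,\alpha_{1-H}$. Combined with Step~1 applied to $1-H$, this gives the bounded inverse. The main obstacle is computing the constant in this Beta-function identity and justifying the interchange of integrals, which is routine once one works on test functions.

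\textbf{Step 3: the measures.} Time-reversed Brownian motion $W$ on $\mathbb{R}^-$ satisfies $|W(t)-W(s)|\lesssim |t-s|^{\frac12-\varepsilon}$ locally, and $W(t)=o(|t|^{\frac12+\varepsilon})$ at infinity, by Kolmogorov's criterion and the law of the iterated logarithm. Since $\frac{1-H}{2}<\frac12$, a scaling argument puts $W$ almost surely in the closure $\mathcal{B}_H$ and not merely in the larger space; this uses slightly better weights and the density of smooth functions in that little-H\"older-type space. Its law $\mathbf{P}$ is a Gaussian measure, and it is unique because the finite-dimensional distributions determine it. The pushforward under $\mathcal{D}_H$ is Gaussian, and its covariance is that of Mandelbrot--van Ness fbm, so the third bullet follows from the classical representation $B^H(t)=\alpha_H^{-1}\int (t-r)_+^{H-\frac12}-(-r)_+^{H-\frac12}\,\mathrm{d}W_r$.

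\textbf{Step 4: stationary noise process.} Verify items (i)--(iv) of Definition~\ref{SDS} directly from \eqref{PPOOAS1}--\eqref{PPOOAS}.
\begin{itemize}
\item Measurability and continuity of $\vartheta_t$ and $P_t$ hold because both are affine and bounded in $\mathcal{B}_H$.
\item Independence of $P_t(\omega^-,\cdot)$ and $\vartheta_t$ holds because the increments of $W$ on $[-t,0]$ and on $(-\infty,-t]$ are independent.
\item Invariance $(\vartheta_t)_\ast\mathbf{P}=\mathbf{P}$ follows from the stationarity of Brownian increments.
\item The identities in (iv) are checked by piecewise substitution.
\item The semigroup $\mathcal{P}_t$ is Feller because $P_t$ is continuous.
\item $\mathbf{P}$ is the unique invariant measure of $\mathcal{P}_t$ because $P_t(\omega^-,\cdot)$ forgets $\omega^-$ up to a shift that is pushed to $-\infty$. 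Concretely, for any two initial pasts the laws under $\mathcal{P}_t$ converge to $\mathbf{P}$, since the contribution of $\omega^-$ sits at times $\le -t$ and the square-root weight kills it as $t\to\infty$.
\end{itemize}
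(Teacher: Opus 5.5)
Your outline reconstructs the construction that the paper only cites (\cite[Lemmas~3.6, 3.8, 3.10]{Hai05} and \cite[Lemma~2.18]{BNGH26}). Steps~2--4 are reasonable in outline. Step~1, however, carries the analytic content of the lemma and does not close as you describe it.

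First, the integrated form $c_H\int_{-\infty}^0(-r)^{H-\frac32}\bigl(\omega(t+r)-\omega(t)-\omega(r)\bigr)\,\mathrm{d}r$ is only correct for $H<\frac12$. For $H>\frac12$ the kernel $(-r)^{H-\frac32}$ is not integrable at $-\infty$, so the $\omega(t)$-term diverges. The correct formula there is $\frac{H-\frac12}{\alpha_H}\int_{-\infty}^0(-r)^{H-\frac32}\bigl(\omega(t+r)-\omega(r)\bigr)\,\mathrm{d}r$.

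More seriously, your tail estimate fails. On $\{|r|>|t-s|\}$ the $\mathcal{B}_H$-norm only gives $|\omega(t+r)-\omega(s+r)|\le\|\omega\|_{\mathcal{B}_H}|t-s|^{\frac{1-H}{2}}\sqrt{1+|t|+|s|+2|r|}$, because the weight sits at $t+r$, not at $t$. This bound is essentially attained. Smooth truncations at $-M$ of $(1+|r|)^{\frac12-\varepsilon}\sin r$ lie in $C_0^\infty$, have uniformly bounded $\mathcal{B}_H$-norm, and have increments of order $|r|^{\frac12-\varepsilon}$ throughout $[-M,0]$. Against $(-r)^{H-\frac32}$ this produces $|r|^{H-1-\varepsilon}$, which for $\varepsilon<H$ gives a tail contribution growing like $M^{H-\varepsilon}$. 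For $H>\frac12$ the estimate also loses a factor $\bigl((1+|t|+|s|)/|t-s|\bigr)^{H-\frac12}$ already on $|t-s|<|r|<1+|t|+|s|$. So no absolute-value bound on the tail can give $\|\mathcal{D}_H\omega\|_{\mathcal{B}_{1-H}}\lesssim\|\omega\|_{\mathcal{B}_H}$; the growth weight together with the decay of the kernel is not enough.

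The missing idea is cancellation: move the increment from $\omega$ onto the kernel. Take $s<t\le 0$ and a cut-off $L$, with $L=1+|t|+|s|$ if $H<\frac12$ and $L=|t-s|$ if $H>\frac12$. Substituting $u=t+r$ and $u=s+r$ gives
\[
\int_{-\infty}^{-L}(-r)^{H-\frac32}\bigl(\omega(t+r)-\omega(s+r)\bigr)\,\mathrm{d}r=\int_{-\infty}^{s-L}\bigl((t-u)^{H-\frac32}-(s-u)^{H-\frac32}\bigr)\omega(u)\,\mathrm{d}u+\int_{s-L}^{t-L}(t-u)^{H-\frac32}\omega(u)\,\mathrm{d}u .
\]
The combined kernel on the right integrates constants to zero, so you may replace $\omega(u)$ by $\omega(u)-\omega(s)$. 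The kernel difference is $\lesssim|t-s|\,(s-u)^{H-\frac52}$, so it gains a full power of decay. You then use $|\omega(u)-\omega(s)|\le\|\omega\|_{\mathcal{B}_H}(s-u)^{\frac{1-H}{2}}\sqrt{1+2|s|+(s-u)}$. With this, both pieces are integrable and bounded by a multiple of $\|\omega\|_{\mathcal{B}_H}|t-s|^{\frac H2}\sqrt{1+|t|+|s|}$. With this repair in Step~1, the rest of your plan goes through.

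There is a smaller point in Step~4. The shifted copy of $\omega^-$ inside $P_t(\omega^-,\omega^+)$ does not become small in $\mathcal{B}_H$ because of the square-root weight alone. For $|u|+|u'|\gg t$ the new weight $\sqrt{1+2t+|u|+|u'|}$ is comparable to the old one. What you need is that elements of the closure $\mathcal{B}_H$ have weighted H\"older quotients tending to zero as $|u|+|u'|\to\infty$. This holds for $C_0^\infty$ and passes to norm limits.
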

\proof This follows from~\cite[Lemmas~3.6, 3.8, and~3.10]{Hai05} and~\cite[Lemma~2.18]{BNGH26}.
\qed
We also have the following result.

\begin{lemma}\label{RR_TT}
	Consider the setting of Lemma~\ref{AJSJAsd}. Then the following statements are valid.
	\begin{itemize}
		\item For every \( \omega^-, \omega^+ \in \mathcal{B}_H \) and all \( 0 \leq \tau \leq s \leq t \), the following identity holds:
		\begin{align}\label{RR_TT1}
			\begin{split}            
				\mathcal{D}_{H} P_{t}(\omega^-, \omega^+)(\tau - t)
				&- \mathcal{D}_{H} P_{t}(\omega^-, \omega^+)(-t)
				\\&= \mathcal{D}_{H} P_{s}(\omega^-, \omega^+)(\tau - s)
				- \mathcal{D}_{H} P_{s}(\omega^-, \omega^+)(-s).
			\end{split}
		\end{align}
		
		\item For all \( r \leq 0 \) and \( \omega^- \in \mathcal{B}_H \), we have 
		\begin{align}\label{RR_TT2}
			(\mathcal{D}_{H} \vartheta_{t} \omega^-)(r)
			= \mathcal{D}_{H} \omega^-(-t + r)
			- \mathcal{D}_{H} \omega^-(-t).
		\end{align}
		
		Define the process \( (B_t^H)_{t \in \mathbb{R}} \) by
		\begin{align*}
			B_t^H =B_{t}^{H}(\omega^-,\omega^+):=
			\begin{cases}
				- \bigl( \mathcal{D}_H P_t(\omega^-, \omega^+) \bigr)(-t), & t \geq 0, \\[6pt]
				\mathcal{D}_H \omega^-(t), & t \leq 0.
			\end{cases}
		\end{align*}
		Then \( (B_t^H)_{t \in \mathbb{R}} \) denotes a two-sided fractional Brownian motion with Hurst parameter \( H \), adapted to the canonical filtration \( (\mathcal{F}_{-\infty}^t)_{t \in \mathbb{R}} \) generated by the underlying Brownian motion appearing in the Mandelbrot-van Ness representation.
	\end{itemize}
\end{lemma}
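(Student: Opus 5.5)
Both identities in Lemma~\ref{RR_TT} come from two facts. First, the explicit formulas \eqref{PPOOAS1} and \eqref{PPOOAS} show that $P_t(\omega^-,\omega^+)$ and $\vartheta_t\omega^-$ are built from $\omega^\pm$ by time shifts and subtraction of a constant. Second, $\mathcal{D}_H$ defined in \eqref{AAS11} commutes with shifts and kills constants, since it only involves $\dot\omega$. We work with $\omega^\pm\in C_0^\infty(\R^-,\R^d)$ and then pass to all of $\mathcal{B}_H$. This is justified because $\mathcal{D}_H:\mathcal{B}_H\to\mathcal{B}_{1-H}$ is continuous (Lemma~\ref{AJSJAsd}), point evaluation is continuous on $\mathcal{B}_{1-H}$ (the norm \eqref{norm:BH} controls $|\omega(t)-\omega(0)|=|\omega(t)|$), and $P_t$, $\vartheta_t$ are continuous (in fact affine and bounded) in their arguments for the weighted norm. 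The only caveat is that $P_t(\omega^-,\omega^+)$ is merely piecewise smooth, with a kink at $s=-t$. Its derivative is bounded and compactly supported, so \eqref{AAS11} still makes sense. Alternatively, one can approximate and use continuity.

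\textbf{Shift identity \eqref{RR_TT2}.} For smooth $\omega^-$ we have $\frac{\mathrm d}{\mathrm ds}\vartheta_t\omega^-(s)=\dot\omega^-(s-t)$. Plugging this into \eqref{AAS11} gives
\[
\alpha_H\,\mathcal{D}_H\vartheta_t\omega^-(r)=\int_{-\infty}^0(-u)^{H-\frac12}\bigl(\dot\omega^-(r+u-t)-\dot\omega^-(u-t)\bigr)\,\mathrm du .
\]
We add and subtract $\dot\omega^-(u)$ inside the integral. The first difference yields $\alpha_H\mathcal{D}_H\omega^-(r-t)$ and the second yields $-\alpha_H\mathcal{D}_H\omega^-(-t)$. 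Each separate integral converges, since $\dot\omega$ has compact support and the kernel is locally integrable because $H-\tfrac12>-1$. This proves \eqref{RR_TT2}.

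\textbf{Consistency identity \eqref{RR_TT1}.} Set $g_t:=P_t(\omega^-,\omega^+)$, which is the concatenation of the reversed future $\omega^+$ on $[-t,0]$ and the past $\omega^-$ before $-t$. By the second relation in Definition~\ref{SDS}(iv) applied with $s$ and $t-s$, we get $g_t=P_{t-s}(g_s,\vartheta_s\omega^+)$. Directly from \eqref{PPOOAS} we also have $\vartheta_{t-s}g_t=g_s$ for $s\le t$, and this can be checked piecewise. Now apply \eqref{RR_TT2} to $g_t$ with shift $t-s$ and argument $r=\tau-s\le0$:
\[
\mathcal{D}_Hg_s(\tau-s)=\mathcal{D}_H\vartheta_{t-s}g_t(\tau-s)=\mathcal{D}_Hg_t(\tau-t)-\mathcal{D}_Hg_t(-(t-s)).
\]
The same identity at $\tau=0$ gives $\mathcal{D}_Hg_s(-s)=\mathcal{D}_Hg_t(-s)-\mathcal{D}_Hg_t(s-t)$. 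Subtracting, and then using \eqref{RR_TT2} once more with shift $t$ to handle the $-t$ evaluation point, yields \eqref{RR_TT1}.

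\textbf{The process $B^H$.} Because $(\mathcal{D}_H)_\ast\mathbf{P}$ is the law of a time-reversed fBm (Lemma~\ref{AJSJAsd}), the negative half $t\le0$ is already an fBm. For $t\ge0$, \eqref{RR_TT1} with $\tau$ fixed shows that the increments $B^H_t-B^H_s$ are consistently defined from the single element $g_T$ for any $T\ge t$. So on $[-T,\infty)\cap(-\infty,T]$ the process is $-\mathcal{D}_Hg_T(\cdot-T)$ recentered at time $0$. Under $\mathbb{P}=\mathbf{P}\times\mathbf{P}$, the concatenation $g_T$ has law $\mathbf{P}$ (the law of $P_T(\omega^-,\cdot)_\ast$ averaged over $\omega^-$ equals $\mathbf{P}$ by invariance in Definition~\ref{SDS}(iii)). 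Hence $g_T$ is a time-reversed two-sided Brownian path, and its $\mathcal{D}_H$-image is a time-reversed fBm. Undoing the reversal and the shift by $T$ gives a two-sided fBm on $[-T,T]$, and letting $T\to\infty$ identifies the finite-dimensional distributions. Adaptedness follows from the Mandelbrot-van Ness form of \eqref{AAS11}, since $\mathcal{D}_Hg_T$ at time $u$ uses only the Brownian increments before $u$.

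\textbf{Main obstacle.} The hardest part is the rigorous extension from smooth to general $\omega^\pm$. This needs care because $P_t$ glues two paths and subtracts $\omega^+(-t)$. We must verify that the map is continuous into $\mathcal{B}_H$ with the weight $\sqrt{1+|t|+|s|}$, and that the kink at $-t$ does not spoil the definition of $\mathcal{D}_H$ on piecewise smooth paths. I would handle this by approximating $\omega^\pm$ in $\mathcal{B}_H$ by smooth functions and using the bounded extension from Lemma~\ref{AJSJAsd}, as in \cite{Hai05}.
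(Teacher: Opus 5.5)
The paper gives no argument for this lemma; it only cites \cite[Lemmas~2.19 and~2.20]{BNGH26}. Your proposal is a self-contained alternative, and its structure is sound. \eqref{RR_TT2} is exactly the commutation $\mathcal{D}_H\circ\vartheta_t=\vartheta_t\circ\mathcal{D}_H$; both sides are continuous on $\mathcal{B}_H$, so density suffices. \eqref{RR_TT1} is this commutation applied to $\vartheta_{t-s}g_t=g_s$. That identity also follows at once from Definition~\ref{SDS}\,(iv): $g_t=P_{t-s}(g_s,\vartheta_s\omega^+)$ and $\vartheta_{t-s}P_{t-s}(g_s,\vartheta_s\omega^+)=g_s$. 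The law of $B^H$ comes from $\int_{\mathcal{B}}\mathcal{P}_T(\omega^-;\cdot)\,\mathbf{P}(\mathrm d\omega^-)=\mathbf{P}$. What your route buys is that everything becomes a structural consequence of shift covariance, with no computation on the glued path.

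Three points need correcting.

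(i) Setting $\tau=0$ in your display gives $\mathcal{D}_Hg_s(-s)=\mathcal{D}_Hg_t(-t)-\mathcal{D}_Hg_t(s-t)$; you wrote $\mathcal{D}_Hg_t(-s)$ on the right. With the correct version, subtracting yields \eqref{RR_TT1} directly. The further appeal to \eqref{RR_TT2} with shift $t$ is then unnecessary, and it could not rescue the version you wrote, since $\mathcal{D}_Hg_t(-s)\neq\mathcal{D}_Hg_t(-t)$ in general.

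(ii) The representation you need is
\[
B^H_t=\mathcal{D}_Hg_T(t-T)-\mathcal{D}_Hg_T(-T),\qquad t\le T,
\]
with a plus sign, not $-\mathcal{D}_Hg_T(\cdot-T)$ recentred. For $0\le t\le T$ it is \eqref{RR_TT1} with $(\tau,s,t)$ replaced by $(t,t,T)$, together with $\mathcal{D}_Hg_t(0)=0$. For $t\le 0$ it requires \eqref{RR_TT2} applied to $\omega^-=g_0=\vartheta_Tg_T$. You did not state this step, but it is what gives the correct joint law across $t=0$.

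(iii) Your adaptedness justification is wrong as stated. By \eqref{AAS11}, $\mathcal{D}_H\omega(u)$ depends on the whole path through the normalising term $\int_{-\infty}^0(-r)^{H-\frac12}\dot\omega(r)\,\mathrm dr$. Only increments $\mathcal{D}_H\omega(u)-\mathcal{D}_H\omega(v)$ are determined by $\omega$ on $(-\infty,\max\{u,v\}]$. Combined with the display in (ii), this gives $\mathcal{F}^t_{-\infty}$-measurability of $B^H_t$ for $t\ge 0$, which is all the SDE uses. For $t<0$, however, $B^H_t=\mathcal{D}_H\omega^-(t)$ contains the term $-\alpha_H^{-1}\int_t^0(-r)^{H-\frac12}\,\mathrm d\omega^-(r)$. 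When $H\neq\frac12$ this term is not determined by the noise before time $t$. So at negative times only the increments of $B^H$ are adapted, and the adaptedness claim in the statement should be read in that sense.
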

\proof
We refer to~\cite[Lemmas~2.19~and~2.20]{BNGH26}.
\qed

\subsection{SDEs driven by fractional Brownian motion}
Finally, we briefly explain how the theory of stochastic dynamical systems can be applied to the SDE 
\begin{align}\label{MAIN}
	\begin{cases}
		\mathrm{d}Y_t = F(Y_t)\, \mathrm{d}t + \sigma\, \mathrm{d}B^H_t, \\
		Y_0 = x \in \mathbb{R}^d.
	\end{cases}
\end{align}
\begin{assumptions}\label{Drift}
	We impose the following conditions on the coefficients.
	\begin{itemize}
		
		\item [1)] The mapping \( \sigma : \mathbb{R}^d \to \mathbb{R}^d \) is an invertible matrix.
		
		\item [2)] \label{DISISV} For all \( \xi_1, \xi_2 \in \mathbb{R}^d \),
		\[
		\langle F(\xi_2) - F(\xi_1),\, \xi_2 - \xi_1 \rangle 
		\leq 
		\min \{ C_1^F - C_2^F |\xi_2 - \xi_1|^2,\; C_3^F |\xi_2 - \xi_1|^2 \},
		\]
		where \( C_i^F > 0 \) for \( i \in \{1,2,3\} \).
		
		\item [3)] The function \( F \) is differentiable and there exist constants \( C_F > 0 \) and \( N \geq 1 \) such that
		\[
		|F(\xi)| \leq C_F(1 + |\xi|)^N, 
		\qquad 
		|\mathrm{D}_\xi F| \leq C_F(1 + |\xi|)^N,
		\quad \text{for all } \xi \in \mathbb{R}^d.
		\]
		\item [4)] If \( \frac{1}{2} < H < 1 \), the derivative of \( F \) is globally bounded.
	\end{itemize}
\end{assumptions}
The next result states how to obtain an RDS (or SDS) from the solution of the SDE \eqref{MAIN}.
\begin{theorem}\label{RDS_SDS_MEAU}
	Under the assumptions of Lemma~\ref{RR_TT}, for every $\omega = (\omega^-, \omega^+) \in \mathcal{B}_H \times \mathcal{B}_H$ and $x \in \mathbb{R}^d$, 
	let us denote the solution of the SDE~\eqref{MAIN} by $(\Phi^{t}_{(\omega^-, \omega^+)}(x))_{t \ge 0}$, i.e.,
	\begin{align*}
		\Phi^{t}_{(\omega^-, \omega^+)}(x) = x + \int_{0}^{t} F\big(\Phi^{s}_{(\omega^-, \omega^+)}(x)\big)\, \mathrm{d}s + \sigma B_{t}^{H}(\omega).
	\end{align*}
	Then the following assertions hold for $t \geq 0$, $x \in \mathbb{R}^d$ and $\omega = (\omega^-, \omega^+) \in \mathcal{B}_H \times \mathcal{B}_H$.
	\begin{enumerate}
		\item  
		The value $\Phi^{t}_{\theta_{-t}(\omega^-, \omega^+)}(x)$ is independent of $\omega^+$ and depends only on $\omega^-$. In particular, setting
		\[
		\varphi^{t}_{\omega^-}(x) := \Phi^{t}_{\theta_{-t}(\omega^-, \omega^+)}(x),
		\]
		where the shift map $(\theta_t)_{t \geq 0}$ is defined in~\eqref{semigr}, entails a \( C^1 \)-SDS on \( \mathcal{X} = \mathbb{R}^d \) with the stationary noise process introduced in Lemma~\ref{AJSJAsd}. 
		
		\item It holds that $\Phi^{t}_{(\omega^-, \omega^+)}(x) = \varphi^{t}_{P_{t}(\omega^-, \omega^+)}(x)$. 
		Moreover, $\Phi$ is a $C^1$-RDS by Theorem~\ref{RDS_SDS}.  
		\item The SDS~$\varphi$ admits a unique invariant (and hence ergodic) measure~$\mu$ in the sense of Definition~\ref{DEAFRS}.  
		In addition, for the law of the solution $(\Phi^{t}_{(\omega^-, \omega^+)}(x))_{t \geq 0}$, we have the convergence
		\begin{align}\label{TOTAL}
			\mathcal{L}\left(\Phi^{t}_{(\omega^-, \omega^+)}(x)\right) \xrightarrow{t \to \infty} (\Pi_{\mathbb{R}^d})_{\star} \mu,
		\end{align}
		in total variation.
	\end{enumerate}
\end{theorem}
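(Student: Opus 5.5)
We prove the three assertions of Theorem~\ref{RDS_SDS_MEAU} in order. The structural ones come from the explicit form of the noise; existence and uniqueness of the invariant measure and the total variation convergence come from Hairer's theory~\cite{Hai05}.

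\textbf{Step 1: well-posedness and regularity.} Fix $\omega=(\omega^-,\omega^+)$. The path $t\mapsto B^H_t(\omega)$ from Lemma~\ref{RR_TT} is continuous. We set $Z_t:=Y_t-\sigma B^H_t$, which turns the integral equation into the random ODE
\[
\dot Z_t=F(Z_t+\sigma B^H_t).
\]
\begin{itemize}
\item Local existence: $F$ is locally Lipschitz, since it is differentiable with polynomially bounded derivative by Assumption~\ref{Drift}~3).
\item Global existence: the one-sided Lipschitz bound $\langle F(\xi_2)-F(\xi_1),\xi_2-\xi_1\rangle\le C_3^F|\xi_2-\xi_1|^2$ gives uniqueness and a Gronwall estimate. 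Together with the dissipativity part $C_1^F-C_2^F|\cdot|^2$ (take $\xi_1=\sigma B^H_t$), it gives an a priori bound on $|Z_t|$, so no blow-up occurs.
\item Differentiability: differentiability in $x$ follows from the variational equation $\dot J_t=\mathrm{D}F(Y_t)J_t$.
\item Continuity: continuity of $(x,\omega)\mapsto Y$ in $C([0,T],\R^d)$ follows from continuity of $\mathcal D_H:\mathcal B_H\to\mathcal B_{1-H}$ (Lemma~\ref{AJSJAsd}) and Gronwall.
\end{itemize}

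\textbf{Step 2: dependence on $\omega^-$ only, and the cocycle identities.} By \eqref{semigr}, $\theta_{-t}(\omega^-,\omega^+)=(\vartheta_t\omega^-,P_t(\omega^+,\omega^-))$. By Definition~\ref{SDS}~(iv), $P_s$ applied to this pair reconstructs, through \eqref{PPOOAS}, a path built from $\omega^-$ alone on $[-t,0]$. Combining with \eqref{RR_TT1}, the increments $B^H_s(\theta_{-t}\omega)$ for $s\in[0,t]$ equal $\mathcal D_H\omega^-(s-t)-\mathcal D_H\omega^-(-t)$, and this involves no $\omega^+$. Hence $\varphi^t_{\omega^-}$ is well defined. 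Its cocycle property \eqref{cocycle_SDS} reduces to the flow property of the random ODE applied to shifted increments, which are described by \eqref{RR_TT2}. The identity $\Phi^t_\omega=\varphi^t_{P_t\omega}$ follows from $\theta_{-t}\theta_t=\mathrm{Id}$ on the relevant components. With $\varphi$ a $C^1$-SDS, Theorem~\ref{RDS_SDS} then shows that $\Phi$ is a $C^1$-RDS. The continuity required in Definition~\ref{SDSSS} was obtained in Step~1.

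\textbf{Step 3: invariant measure and convergence (main obstacle).} This is the substantial part; we would invoke Hairer's results~\cite{Hai05, HO07} rather than reprove them.
\begin{itemize}
\item Existence: the dissipativity in Assumption~\ref{Drift}~2) yields a Lyapunov-type moment bound $\sup_t\E|Y_t|^p<\infty$. Together with Feller continuity of $\mathcal Q_t$, this gives existence of an invariant measure by Krylov--Bogoliubov on $\R^d\times\mathcal B_H$. Tightness in the noise component is automatic because the marginal is $\mathbf P$.
\item Uniqueness and total variation convergence: both follow from Hairer's asymptotic coupling. Invertibility of $\sigma$ lets one steer two solutions together in finite time by a control in the Cameron--Martin space of the driving Brownian motion. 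The cost of this control on future fbm increments decays polynomially, which gives a total variation bound of order $t^{-\gamma}$.
\item Range of $H$: for $H>1/2$, Assumption~\ref{Drift}~4) (bounded $\mathrm{D}F$) is exactly what \cite{Hai05} requires to handle the more singular memory kernel. For $H<1/2$, the polynomial growth condition suffices, as in \cite{HO07}.
\item Ergodicity: uniqueness implies ergodicity, since the invariant measures form a convex set whose extremal points are ergodic.
\end{itemize}
The delicate point, which we take from the literature, is verifying that the hypotheses of these results match Assumption~\ref{Drift} precisely. In particular, one has to check that the one-sided condition with $C_1^F-C_2^F|\cdot|^2$ is the form of dissipativity used there.
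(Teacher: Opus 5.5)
Your proposal is correct and follows essentially the paper's route. The paper gives no independent argument here and only cites \cite{BNGH26}. Your Step~2 is the same increment identity $B^H_s(\theta_{-t}\omega)=\mathcal{D}_H\omega^-(s-t)-\mathcal{D}_H\omega^-(-t)$, $0\le s\le t$, that the paper itself derives in the proof of Theorem~\ref{ATTRRP}, and your Step~3 rests on Hairer's ergodic theory for additive fBm, which is the source of the invariant-measure results behind the cited statements.

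One small correction to your sources: uniqueness and algebraic total-variation convergence for all $H\in(0,1)$ under the hypotheses of Assumption~\ref{Drift} is the main theorem of \cite{Hai05} itself. By contrast, \cite{HO07} treats multiplicative noise with $H>1/2$, so it is not the right reference for the case $H<1/2$.
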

\proof
This was established in~\cite[Lemma~4.4, Corollary~4.8, and Proposition~4.11]{BNGH26}.
\qed
\section{Random invariant measures and attractors for RDS}\label{sec:3}
In this section, we aim to describe the structure of the random measures that constitute the random fibers of the invariant measure \( \mu_{\omega^-} \). Moreover, we prove the existence of random attractors which follows from the dissipativity of \( F \) stated in Assumption~\ref{Drift}. Moreover, we explore the connection between invariant measures and attractors; see also Remarks~\ref{RESFSS} and \ref{ARATSasd}.
We fix here some useful notations that will be used further on.\begin{notation}\label{ASSMM}
	\begin{itemize}
		\item We fix $H \in (0,1)$, write \( \mathcal{B} \) instead of \( \mathcal{B}_H \), and work on the probability space $(\Omega,\cF,\mathbb{P})$, where \( \Omega = \mathcal{B} \times \mathcal{B} \) and \( \mathbb{P} = \mathbf{P} \times \mathbf{P} \) as introduced in Theorem \ref{RDS_SDS}. 
		
		
		
		\item By \( \tilde{\mathcal{B}} \) we refer to the measurable subset of \( \mathcal{B} \) obtained in  Lemma~\ref{RAND_MEASURE}.
	\end{itemize}
\end{notation}  

\subsection{Random invariant measures}
From Lemma \ref{RAND_MEASURE}, one can disintegrate the invariant measure of the SDS to obtain a family of random measures on the phase space $\mathbb{R}^d$. The aim of this subsection is to investigate the structure of these random measures, when the top Lyapunov exponent is negative.
We recall some preliminary results in this direction following \cite{BNGH26}, in particular the existence of a local stable manifold. 
\begin{theorem}\label{ASASiid}
	There exists a sequence of deterministic values, called {Lyapunov exponents}
	\[
	\lambda_k < \cdots < \lambda_1, \qquad \lambda_i \in [-\infty, \infty),
	\]
	and a set of full measure $\tilde{\Omega} \subseteq \mathbb{R}^d \times \mathcal{B} \times \mathcal{B}$ with respect to $\mu \times \mathbf{P}$ such that 
	\begin{enumerate}
		\item The set $\tilde{\Omega}$ is $\Theta_t$-invariant for every $t \ge 0$.  
		For each $\lambda \in \mathbb{R}$, define
		\[
		G_\lambda(\omega,x) := \left\{ z \in \mathbb{R}^d : 
		\limsup_{t \to \infty} \frac{1}{t} \log \big| \mathrm{D}_x \Phi^t_\omega(z) \big| \le \lambda \right\}.
		\]
		Then $G_{\lambda_1}(\omega, x) = \mathbb{R}^d$, and for every $1 \le i < k$,
		\[
		z \in G_{\lambda_i}(\omega,x) \setminus G_{\lambda_{i+1}}(\omega,x)
		\quad \Longleftrightarrow \quad
		\lim_{t \to \infty} \frac{1}{t} \log \big| \mathrm{D}_x \Phi^t_\omega(z) \big| = \lambda_i.
		\]
		Moreover, for all $z \in G_{\lambda_k}(\omega,x) \setminus \{0\}$,
		\[
		\lim_{t \to \infty} \frac{1}{t} \log \big| \mathrm{D}_x \Phi^t_\omega(z) \big| = \lambda_k.
		\]
		\item Suppose $\lambda_1 < 0$ and that for some $r \in (0,1]$ and all $y,z \in \mathbb{R}^d$ there exist constants $\overline{C}_F > 0$ and $p_1 \ge 1$ such that
		\begin{equation}\label{DEEE}
			\| \mathrm{D}_y F - \mathrm{D}_z F \|_{L(\mathbb{R}^d, \mathbb{R}^d)}
			\le \overline{C}_F (1 + |y| + |z|)^{p_1} |y-z|^r.
		\end{equation}
		Then there exists a full-measure set $\mathcal{M} \subseteq \mathbb{R}^d \times \Omega$ such that, for every $0 < \nu < -\lambda_1$, there exists a positive random variable
		\begin{align}\label{RRRNU}
			R^\nu : \mathcal{M} \to (0,\infty)
		\end{align}
		with the property that for all $(x,\omega) \in \mathcal{M}$ and all $y \in \mathbb{R}^d$ satisfying $|y| \le R^\nu(x,\omega)$,
		\[
		\sup_{t \ge 0} \exp(t\nu) \, \big| \Phi^t_\omega(x+y) - \Phi^t_\omega(x) \big| < \infty.
		\]
	\end{enumerate}
\end{theorem}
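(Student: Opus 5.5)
The statement consists of a multiplicative ergodic theorem for the linearised cocycle $\mathcal{T}$ and a local contraction (stable manifold) result when $\lambda_1<0$. I would prove the two parts separately, in that order.

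\emph{Part 1.} By Theorem~\ref{RDS_SDS_MEAU}, $\varphi$ is a $C^1$-SDS with a unique, hence ergodic, invariant measure $\mu$. The shift $\vartheta_t$ on $\mathcal{B}_H$ is ergodic, since it is the shift of a Brownian increment process. Theorem~\ref{RDS_SDS} (items 3--5) then shows that $\mathcal{T}^t_{(x,\omega)}=\mathrm{D}_x\Phi^t_\omega$ is a linear continuous cocycle over the ergodic metric dynamical system $(\mathbb{R}^d\times\Omega,\mu\times\mathbf{P},\Theta)$. It remains to check the Oseledets integrability condition $\sup_{t\in[0,1]}\log^+|\mathrm{D}_x\Phi^t_\omega|\in L^1(\mu\times\mathbf{P})$.

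This condition holds deterministically. Since the noise is additive, $J_t=\mathrm{D}_x\Phi^t_\omega$ solves $\dot J_t=\mathrm{D}F(\Phi^t_\omega(x))J_t$. The one-sided bound $\langle F(\xi_2)-F(\xi_1),\xi_2-\xi_1\rangle\le C_3^F|\xi_2-\xi_1|^2$ in Assumption~\ref{Drift} gives $\langle \mathrm{D}F(\xi)v,v\rangle\le C_3^F|v|^2$. Hence $\tfrac{d}{dt}|J_tv|^2\le 2C_3^F|J_tv|^2$, and so $|J_t|\le e^{C_3^Ft}$.

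Applying the one-sided multiplicative ergodic theorem to the time-one map gives the deterministic exponents, the filtration $G_{\lambda_i}$, and the exponential growth rates along integer times. The same bound $|J_t|\le e^{C_3^F t}$ on $[0,1]$ passes these rates to continuous time. The full-measure set is made $\Theta_t$-invariant in the standard way, by intersecting $\Theta$-preimages over rational times and using the cocycle property.

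\emph{Part 2.} Fix $(x,\omega)$, write $x_t=\Phi^t_\omega(x)$ and $z_t=\Phi^t_\omega(x+y)-x_t$. Then
\[
\dot z_t=\mathrm{D}F(x_t)z_t+\mathcal{R}_t(z_t),\qquad |\mathcal{R}_t(z)|\le \overline{C}_F(1+2|x_t|+|z|)^{p_1}|z|^{1+r},
\]
by \eqref{DEEE} and the mean value theorem. I discretise in time and use the maps $f_n(z)=\Phi^1_{\theta_n\omega}(x_n+z)-x_{n+1}$. These satisfy $f_n(0)=0$ and $\mathrm{D}f_n(0)=\mathcal{T}^1_{\Theta_n(x,\omega)}$. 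Their nonlinear remainders are $C^{1+r}$ with constants $K_n$ that are polynomial in $\sup_{s\in[n,n+1]}|x_s|$ (again using Gronwall with $C_3^F$).

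Since $\lambda_1<0$, Pesin/Ruelle's construction of Lyapunov norms gives, for any $\nu<\nu'<-\lambda_1$, a tempered random constant $C_\varepsilon$ with $|\mathcal{T}^{m}_{\Theta_n(x,\omega)}|\le C_\varepsilon(\Theta_n(x,\omega))e^{-m\nu'}$. Induction over $n$ then shows that $|z_n|\le \tilde C e^{-n\nu}|y|$ as long as $|y|$ lies below a random radius $R^\nu(x,\omega)$. This radius is built from $C_\varepsilon$ and $\sup_n K_ne^{-\varepsilon n}$. Finally, Gronwall on $[n,n+1]$ passes the bound to continuous time $t$.

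The main obstacle is showing that $K_n$ grows subexponentially, i.e. that $g(x,\omega)=\log\bigl(1+\sup_{s\in[0,1]}|\Phi^s_\omega(x)|\bigr)$ is tempered along $\Theta$. My first attempt would be to show $g\in L^1(\mu\times\mathbf{P})$ and apply Birkhoff's theorem, which gives $g\circ\Theta_n/n\to0$. The integrability should follow from the dissipativity constants $C_1^F,C_2^F$, which give moment bounds for $\Pi_{\mathbb{R}^d}\mu$ of the kind in \cite{LPS23}, together with Gaussian moments of $\sup_{[0,1]}|B^H|$ under $\mathbf{P}$ (cf.\ Remark~\ref{TEMOER}).
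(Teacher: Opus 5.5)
Your proposal is correct and follows the route the paper relies on. The paper gives no argument of its own here and cites \cite[Propositions~4.13 and~4.17]{BNGH26}: first the multiplicative ergodic theorem for the linearised cocycle $\mathcal{T}$ over the ergodic skew product $(\mathbb{R}^d\times\Omega,\mu\times\mathbf{P},\Theta)$ of Theorem~\ref{RDS_SDS}, then a local stable-manifold estimate with tempered constants, which is exactly what you reconstruct; your checks of the hypotheses are sound, namely the deterministic bound $|\mathrm{D}_x\Phi^t_\omega|\le e^{C_3^F t}$ from the one-sided Lipschitz constant (which gives integrability and the passage to continuous time) and the subexponential growth of the remainder constants via $\log(1+\sup_{s\le 1}|\Phi^s_\omega(x)|)\in L^1(\mu\times\mathbf{P})$ and Birkhoff's theorem.
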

\proof
See \cite[Propositions~4.13 and~4.17]{BNGH26}.
\qed
This leads to the following consequence.
\begin{corollary}\label{STABSS}
	Let the hypotheses of the second statement of Theorem \ref{ASASiid} hold. Then for every \( \epsilon > 0 \), we can find a random, non-empty, open set \( U(\omega) \subset \mathbb{R}^d \), such that for
	\begin{align}\label{shrink}
		\mathcal{C}:=\left\lbrace \omega : \ \ \lim_{n \to \infty} \text{\emph{diam}}\left(\Phi^{n}_{\omega}(U(\omega))\right) = 0 \right\rbrace,
	\end{align}
	we have \( {\P}(\mathcal{C}) > 1 - \epsilon \), where the diameter of a subset $S\subseteq \R^d$ is defined as \( \text{\emph{diam}}(S)=\sup\{ |x-y| : x,y\in S\} \). 
\end{corollary}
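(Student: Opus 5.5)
The idea is to turn the pointwise local stable manifold statement of Theorem~\ref{ASASiid} into a random open set by picking a measurable center $x(\omega)$. The set $U(\omega)$ will be a small ball around $x(\omega)$ on which the contraction estimate holds uniformly with probability close to one. The appendix result from measure theory mentioned in the introduction is presumably what supplies the measurable selection.

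\textbf{Step 1: measurable center.} Since $\mathcal{M}$ has full $\mu\times\mathbf{P}$-measure, Fubini together with the disintegration of Lemma~\ref{RAND_MEASURE} shows that for $\mathbb{P}$-a.e.\ $\omega=(\omega^-,\omega^+)$ the section $\mathcal{M}_\omega=\{x:(x,\omega)\in\mathcal{M}\}$ has full $\mu_{\omega^-}$-measure, hence is nonempty. By a measurable selection theorem (for instance Aumann/von Neumann, using that $\mathcal{M}$ is measurable in a Polish product), we choose a measurable $x:\Omega\to\mathbb{R}^d$ with $(x(\omega),\omega)\in\mathcal{M}$ almost surely. If needed, we first restrict $\mathcal{M}$ to a $\sigma$-compact subset of nearly full measure so that selection is clean; losing $\epsilon/3$ probability here is harmless.

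\textbf{Step 2: uniform constants.} Fix $\nu\in(0,-\lambda_1)$. Set
\[
K(\omega):=\sup_{t\ge0}\sup_{|y|\le R^\nu(x(\omega),\omega)/2} e^{t\nu}\,\big|\Phi^t_\omega(x(\omega)+y)-\Phi^t_\omega(x(\omega))\big|.
\]
Theorem~\ref{ASASiid} gives finiteness only pointwise in $y$, not a uniform bound. The main obstacle is therefore to obtain a uniform constant on a whole ball.

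My first attempt is to use that the proof of the stable manifold theorem in \cite{BNGH26} really produces a Lipschitz-type estimate
\[
|\Phi^t_\omega(x+y)-\Phi^t_\omega(x)|\le C(x,\omega)\,e^{-\nu t}|y|
\]
for $|y|\le R^\nu$, which would make $K(\omega)<\infty$ directly. If only the pointwise statement is available, I fall back on a Baire/Egorov-type argument. The sets
\[
A_m(\omega)=\Big\{y:|y|\le R^\nu/2,\ \sup_{t}e^{t\nu}|\Phi^t_\omega(x+y)-\Phi^t_\omega(x)|\le m\Big\}
\]
are closed by continuity of $\Phi$ and their union is the closed ball. Baire then gives some $A_m(\omega)$ with nonempty interior. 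Choosing a measurable ball $B(\omega)$ inside it, for example the first rational ball in an enumeration contained in $A_m$, yields an open set on which the sup is at most $m(\omega)<\infty$.

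\textbf{Step 3: conclusion.} Let $U(\omega)$ be this open ball; it is random, nonempty and open. For $y_1,y_2\in U(\omega)$,
\[
|\Phi^n_\omega(x+y_1)-\Phi^n_\omega(x+y_2)|\le 2m(\omega)e^{-n\nu},
\]
so $\operatorname{diam}(\Phi^n_\omega(U(\omega)))\le 4m(\omega)e^{-n\nu}\to0$ on the event where the construction succeeded. This event has probability at least $1-\epsilon$ after discarding the small exceptional sets from Steps 1 and 2. Measurability of $\omega\mapsto m(\omega)$ and of the chosen ball follows by taking suprema over rational $t$ and rational $y$, using continuity. If measurability of the Baire choice is troublesome, we pick a deterministic $m$ large enough that the relevant event has probability exceeding $1-\epsilon$, which is also where the $\epsilon$ in the statement naturally enters.
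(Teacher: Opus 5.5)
Your proposal is correct. Its first step is essentially the paper's, but its second step takes a different and more careful route.

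\textbf{Step 1 (measurable centre).} The paper restricts to compact sets $\mathcal{M}^n\subseteq\{(x,\omega)\in\mathcal{M}: R^\nu(x,\omega)\ge 1/n\}$ whose measure tends to one (by regularity). It then picks a Borel centre $\mathcal{K}(\omega)\in\mathcal{M}^n(\omega)$ via Kuratowski--Ryll-Nardzewski (Lemma~\ref{MESUA}). Your Aumann/von Neumann selection, or your $\sigma$-compact restriction, plays exactly this role.

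\textbf{Step 2 (uniformity over a ball).} Here the routes genuinely differ. The paper takes $U(\omega)$ to be the full ball $B(\mathcal{K}(\omega),R^\nu(\mathcal{K}(\omega),\omega))$ and concludes $\operatorname{diam}(\Phi^n_\omega(U(\omega)))\to0$ directly from Theorem~\ref{ASASiid}. As stated, that theorem only gives $\sup_{t}e^{t\nu}|\Phi^t_\omega(x+y)-\Phi^t_\omega(x)|<\infty$ for each fixed $y$. The paper is therefore silently using a bound uniform over the ball, presumably what the stable manifold construction in \cite{BNGH26} provides. That is your option (a), which relies on an estimate not stated in the paper, so your option (b) is what actually carries your proof.

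Your Baire argument extracts the needed uniformity from the pointwise statement as written:
\begin{itemize}
\item each $A_m(\omega)$ is closed, being an intersection over $t$ of sublevel sets of continuous functions;
\item the $A_m(\omega)$ exhaust the closed ball of radius $R^\nu/2$;
\item hence some $A_m(\omega)$ contains a rational ball, which can be selected measurably by testing rational $t$ and rational $y$, using continuity in both.
\end{itemize}

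\textbf{Trade-offs.} Your $U(\omega)$ is no longer centred at the selected point and has no deterministic lower bound on its radius. The paper's sets $\mathcal{M}^n$ with $R^\nu\ge 1/n$ are reused in Case~II of Proposition~\ref{descrte}, where that lower bound matters. For the corollary itself this is irrelevant. In fact, if you perform the selection on the whole (full-measure) projection of $\mathcal{M}$, your construction gives $\mathcal{C}$ of full measure, not just $\mathbb{P}(\mathcal{C})>1-\epsilon$.
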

\proof
Let \( R^{\nu} \) be the random variable obtained in Theorem \ref{ASASiid}. For each \( n \in \mathbb{N} \), we define
\[
\overline{\mathcal{M}}^{n} := \left\lbrace (x, \omega) \in \mathcal{M} : R^{\nu}(x, \omega) \geq \frac{1}{n} \right\rbrace.
\]
By construction, the set \( \overline{\mathcal{M}}^{n} \) is measurable and satisfies
\[
\lim_{n \to \infty} (\mu \times \mathbf{P})(\overline{\mathcal{M}}^n) = 1.
\]
Since \( \mathbb{R}^d \times \Omega \) is a Polish space, every probability measure on it is regular. Hence, for each \( n \in \mathbb{N} \), there exists a compact subset \( \mathcal{M}^n \subseteq \overline{\mathcal{M}}^n \) such that
\[
\lim_{n \to \infty} (\mu \times \mathbf{P})(\mathcal{M}^n) = 1.
\]
In particular, the projection \( \Pi_\Omega(\mathcal{M}^n) \) is compact and thus a measurable subset of \( \Omega \), so we obtain
\begin{align}\label{YHa}
	\lim_{n \to \infty} \P\left( \Pi_\Omega(\mathcal{M}^n) \right) = 1.
\end{align}
Furthermore, for every \( \omega \in \Pi_\Omega(\mathcal{M}^n) \), the fiber 
\begin{align}\label{GBAs85}
	{\mathcal{M}^n}(\omega) = \left\lbrace x \in \mathbb{R}^d : (x, \omega) \in {\mathcal{M}^n} \right\rbrace
\end{align}
is a compact and measurable subset of \( \mathbb{R}^d \). By Lemma~\ref{MESUA}, there exists a Borel-measurable function \( \mathcal{K} : \Pi_\Omega(\mathcal{M}^n) \to \mathbb{R}^d \) such that \( \mathcal{K}(\omega) \in \mathcal{M}^n(\omega) \) for every \( \omega \in \Pi_\Omega(\mathcal{M}^n) \).
Moreover, by definition
\[
R^\nu(\mathcal{K}(\omega), \omega) \geq \frac{1}{n}, \quad \text{for all } \omega \in \Pi_\Omega(\mathcal{M}^n).
\]
We further define the open ball in $\R^d$
\[
U(\omega) := \left\lbrace x \in \mathbb{R}^d : \left\vert x - \mathcal{K}(\omega) \right\vert < R^\nu(\mathcal{K}(\omega), \omega) \right\rbrace.
\]
Due to \eqref{YHa} 
and  Theorem~\ref{ASASiid}, we conclude that the  claim holds for the family \( \{U(\omega)\}_{\omega \in \Pi_\Omega(\mathcal{M}^n)} \), provided that \( n \) is chosen sufficiently large.
\qed\\
Recalling Remark \ref{RESFSS}, we know that we can construct an invariant measure for a white-noise RDS in the Markovian setting. In that case, the sign of the first Lyapunov exponent determines the nature of the associated random measures: negative exponents yield atomic measures, while positive ones lead to SRB measures. 
Since we assume a negative top Lyapunov exponent, it is natural to expect that the corresponding measure is atomic which is what we prove next. The foundation of our argument originates from~\cite{G87}, albeit with several important modifications. We also refer to~\cite[Lemma 2.19]{FGS16a}, where a related problem is studied in the context of Markov processes based on the techniques from~\cite{G87}.
\begin{proposition}\label{descrte} 
	Let the hypotheses of the second statement in Theorem \ref{ASASiid} hold. Then there exist \( p \in \mathbb{N} \) and random variables \( \{ b_i \}_{1 \leq i \leq p} \), with \( b_i : \tilde{\mathcal{B}} \rightarrow \mathbb{R}^d \), such that
	\[
	\mu_{\omega^-} = \frac{1}{p} \sum_{i=1}^{p} \delta_{\{ b_i(\omega^-) \}},
	\]
	where \( \mu_{\omega^-} \) denotes the disintegrated invariant measure obtained in Lemma~\ref{RAND_MEASURE}. In particular, this representation shows that the disintegrated measures are discrete.
\end{proposition}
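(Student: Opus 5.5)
We follow the scheme of \cite{G87} and \cite[Lemma 2.19]{FGS16a}, replacing the Markovian martingale arguments by the invariance relation of Lemma~\ref{RAND_MEASURE} and the ergodicity of $\Theta_t$ from Theorem~\ref{RDS_SDS}. The key observable is the largest atom
\[
m(\omega^-) := \sup_{x\in\R^d}\mu_{\omega^-}(\{x\}).
\]
The plan has three stages: show $m>0$ almost surely, show $m$ is almost surely a deterministic constant $m_0$, and show that every atom has mass $m_0$. Then $p:=1/m_0$ is an integer, the atoms are finitely many, and measurable enumerations $b_1,\dots,b_p$ are obtained from a measurable selection argument (as in Lemma~\ref{MESUA}).

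\emph{Invariance of $m$.} By \eqref{eq:invariance}, $\mu_{P_t(\omega^-,\omega^+)}=(\Phi^t_{(\omega^-,\omega^+)})_\ast\mu_{\omega^-}$. The map $\Phi^t_\omega$ is a diffeomorphism of $\R^d$, since it is the flow of an ODE with a shifted additive noise and is invertible backward in time. Hence it maps atoms bijectively onto atoms with the same masses. Therefore $m(P_t(\omega^-,\omega^+))=m(\omega^-)$, and more generally the ordered sequence of atom masses is invariant under $\theta_t$. Since $\vartheta_t$ is ergodic for the fBm noise process and $\theta_t$ preserves $\mathbb P$, this sequence is almost surely a deterministic constant.

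\emph{Positivity of $m$ (main step).} Fix $\epsilon>0$ and let $U(\omega)$ be the contracting balls from Corollary~\ref{STABSS}, chosen with centres $\mathcal K(\omega)$ in the $\mu_{\omega^-}$-typical fibres, so that $\mu_{\omega^-}(U(\omega))\ge \delta>0$ on a set $G$ of probability at least $1-2\epsilon$ for some fixed $\delta$. This is where care is needed: the selection must be made with respect to $\mu\times\mathbf P$, which has $\mu_{\omega^-}$ as its fibre law. On $G\cap\mathcal C$ the images $\Phi^n_\omega(U(\omega))$ carry mass at least $\delta$ of $\mu_{P_n(\omega)}$ and have diameter tending to $0$. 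We then define $f_\eta(\omega^-):=\sup_x\mu_{\omega^-}(B(x,\eta))$. By the measure-preserving property of $\theta_n$, $\mathbb P(f_\eta\ge\delta)\ge 1-2\epsilon$ for every $\eta>0$, because the relevant events at time $n$ have the same law as at time $0$. Letting $\eta\downarrow0$, $f_\eta\downarrow m$ monotonically, which gives $\mathbb P(m\ge\delta)\ge1-2\epsilon>0$. By the invariance step, $m\ge\delta>0$ almost surely. I expect this to be the main obstacle: the transfer from ``diameter shrinks along $n$'' to ``mass concentrates at time $0$'' must be done with stationarity of $\mathbb P$ and the full fibre invariance \eqref{eq:invariance}, because no Markov property is available.

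\emph{All atoms have maximal mass.} Let $A(\omega^-)$ be the set of atoms of mass exactly $m$, which is finite since $m>0$. By the invariance step, $\Phi^t$ maps $A$ onto $A$, so the normalized measure $\nu_{\omega^-}:=\mu_{\omega^-}|_{A(\omega^-)}/\mu_{\omega^-}(A(\omega^-))$ is again an invariant family. Integrating it against $\mathbf P$ gives an invariant measure for the SDS with marginal $\mathbf P$. If $\mu_{\omega^-}(A)<1$ on a set of positive measure, then $\mu$ would be a nontrivial convex combination of the measures built from $\mu|_A$ and $\mu|_{A^c}$, both invariant, contradicting the uniqueness and ergodicity of $\mu$ in Theorem~\ref{RDS_SDS_MEAU}. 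Hence $\mu_{\omega^-}=m\sum_{a\in A}\delta_a$, so $p=|A|=1/m$ is constant. Ordering the atoms, for instance lexicographically, yields measurable maps $b_i$.
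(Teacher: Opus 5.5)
Your argument is correct, but it takes a different route from the paper's.

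\textbf{The paper's route.} The paper works with the fibre function $\mathcal{H}(x,\omega)=\mu_{\omega^-}(\{x\})$ on $\mathbb{R}^d\times\Omega$. This function is $\Theta_t$-invariant, so ergodicity of the skew product $\Theta_t$ with respect to $\mu\times\mathbf{P}$ makes it constant. That single step gives the dichotomy ``purely atomic with equal masses $\mathcal{H}=1/p$'' versus ``purely diffuse''. The diffuse case is then excluded in three moves:
\begin{enumerate}
\item import from \cite{FGS16a} a kernel $G(x,y)$ that blows up on the diagonal yet is integrable against $\mathbb{E}[\mu_{\omega^-}\otimes\mu_{\omega^-}]$;
\item transport $\int G\,\mathrm{d}(\mu_{\omega^-}\otimes\mu_{\omega^-})$ along $\Phi^n_\omega$ by stationarity;
\item apply Fatou's lemma to pairs lying in a common local stable ball.
\end{enumerate}

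\textbf{Your route.} You work instead with the base quantity $m(\omega^-)=\sup_x\mu_{\omega^-}(\{x\})$. Its constancy needs only ergodicity of the noise shift: from $m(P_t(\omega^-,\omega^+))=m(\omega^-)$, substitute $(\omega^-,\omega^+)=\theta_{-t}(\eta^-,\eta^+)$ and use (iv) of Definition~\ref{SDS} to get $m\circ\vartheta_t=m$ almost surely. You get positivity of atoms directly from the concentration functional $f_\eta$, stationarity of $\theta_n$, and $f_\eta\downarrow m$ (the last needs a short tightness argument). This is more elementary than the singular-kernel construction and makes the role of the local stable balls explicit. You then replace ergodicity of $\Theta_t$ by an observation: the fibrewise-normalized restriction of $\mu$ to the maximal atoms is again an invariant measure for the SDS, so uniqueness (or ergodicity) of $\mu$ forces $\mu_{\omega^-}(A(\omega^-))=1$.

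\textbf{Trade-off.} The paper obtains equal masses and absence of a diffuse part in one stroke, at the price of the auxiliary kernel. Your proof splits these into two steps, but each is more transparent.

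\textbf{One point you must make precise.} Corollary~\ref{STABSS} as stated gives no lower bound on $\mu_{\omega^-}(U(\omega))$. The selected centre $\mathcal{K}(\omega)\in\mathcal{M}^n(\omega)$ need not lie in $\mathrm{supp}(\mu_{\omega^-})$, so $U(\omega)$ could carry zero mass. Intersect $\overline{\mathcal{M}}^n$ with the full-measure set $\{(x,\omega): x\in\mathrm{supp}(\mu_{\omega^-})\}$ before the inner-regularity step. Then $U(\omega)$ is an open ball of radius at least $1/n$ around a support point, hence has positive mass, and your choice of $\delta$ goes through. Incidentally, pointwise contraction inside $U(\omega)$ together with dominated convergence already gives $\liminf_n f_\eta(P_n(\omega^-,\omega^+))\ge \mu_{\omega^-}(U(\omega))$. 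So you do not even need the uniform shrinking of diameters.
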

\proof
For $(x,\omega)\in\R^d\times\Omega$ , we define
\begin{equation}\label{SSSdsd}
	\mathcal{H}(x, \omega^-, \omega^+) := \mu_{\omega^-}(\{x\}).
\end{equation}
Note that since \(\Phi^{t}_{\omega}\) is injective, we have $  (\Phi^{t}_{\omega})^{-1}\big(\{\Phi^{t}_{\omega}(x)\}\big) = \{x\}$. Thus, by property \ref{PRPR} 
of Lemma \ref{RAND_MEASURE},
\begin{align*}
	\mathcal{H}\left(\Theta_{t}(x,\omega)\right)&=\mu_{P_{t}(\omega^-,\omega^+)}\left(\{\Phi^{t}_{\omega}(x)\}\right)=\left(\Phi^{t}_{\omega}\right)_{\star}\mu_{\omega^-}\left(\{\Phi^{t}_{\omega}(x)\}\right)\\&=\mu_{\omega^-}(\Phi^{t}_{\omega})^{-1}(\{\Phi^{t}_{\omega}(x)\})=\mathcal{H}(x,\omega).
\end{align*} 
Since for every $t>0$, \( \Theta_t\) is ergodic, the function \( \mathcal{H} \) must be constant on a set of full measure with respect to \( \mu \times \mathbf{P} \). There are two possible scenarios: \( \mathcal{H} \neq 0 \) or \( \mathcal{H} = 0 \). We treat these two cases separately. First, we show that assuming \( \mathcal{H} \neq 0 \) proves our claim and second we demonstrate that the case \( \mathcal{H} = 0 \) can be ruled out. \\
\textbf{Case I}. This is similar to the argument provided in \cite[Lemma 2.19]{FGS16a}. However, since in our setting \( \Phi \) is not Markov, we provide more details for the sake of completeness. We prove the statement by contradiction and 
assume that \( \mathcal{H} \neq 0 \). Then, almost surely for \( \omega^- \in \mathcal{B} \), the support of \( \mu_{\omega^-} \) must consist of finitely many discrete points. To see this, we define
\[
N(\omega^-) :=
\begin{cases}
	\# \operatorname{supp}(\mu_{\omega^-}), & \text{if } \# \operatorname{supp}(\mu_{\omega^-}) < \infty \\[6pt]
	+\infty, & \text{otherwise}.
\end{cases}
\]
We have
\begin{align*}
	\mathcal{H} = \int_{\mathbb{R}^d \times \Omega} \mathcal{H}(x,\omega) \, (\mu \times \mathbf{P})(\mathrm{d}(x,\omega)) = \int_{\mathcal{B}} \int_{\mathbb{R}^d} \mu_{\omega^-}(\{x\}) \, \mu_{\omega^-}(\mathrm{d}x) \, \mathbf{P}(\mathrm{d}\omega^-).
\end{align*}
Since \( \mathcal{H}(x,\omega) \) is constant \( \mu \times \mathbf{P} \)-almost surely, we have
\begin{align*}
	\mathcal{H} = \int_{\mathcal{B}} \int_{\mathbb{R}^d} \mu_{\omega^-}(\{x\}) \, \mu_{\omega^-}(\mathrm{d}x) \, \mathbf{P}(\mathrm{d}\omega^-) = \mathcal{H}^2 \int_{\mathcal{B}} N(\omega^-) \, \mathbf{P}(\mathrm{d}\omega^-).
\end{align*}
We know that \(\mathcal{H} \neq 0\). Thus
\[
\int_{\mathcal{B}} N(\omega^-) \, \mathbf{P}(\mathrm{d}\omega^-) = \frac{1}{\mathcal{H}} < \infty.
\]
Moreover by Fubini's theorem, the function \( \omega^- \mapsto N(\omega^-) \) is measurable. Since \( \mathcal{H}(x,\omega) \) is constant with respect to \( \mu \times \mathbf{P} \), this indicates that \( N(\omega^-) \) is \( \mathbf{P} \)-almost surely equal to some \( p \in \mathbb{N} \) and so $p=\frac{1}{\mathcal{H}}$.
Consequently, on a set of full measure in \( \mathcal{B} \), we have
\[
\mu_{\omega^-} = \frac{1}{p} \sum_{i=1}^{p} \delta_{\{ b_{i}(\omega^-) \}},
\]
where \( (b_{i}(\omega^-))_{1 \leq i \leq p} \) are points in the support of \( \mu_{\omega^-} \).

\textbf{Case II.} Now, we consider the case $\cH= 0$.   Then, almost surely, all random measures contain no point masses and are therefore diffuse. More precisely, there exists a set of full measure in \( \tilde{\mathcal{B}} \) such that for every \( x \in \mathbb{R}^d \), we have
\begin{align*}
	\mu_{\omega^-}\left(\{x\}\right) = 0.
\end{align*}
We prove that this leads to a contradiction. Consequently, the assertion follows from the previous case.
Let 
\begin{align*}
	\Delta: = \left\lbrace (x,x) \mid x \in \mathbb{R}^d \right\rbrace
\end{align*}
be the diagonal in $\R^d \times \R^d$.
Then we can find a measurable function 
\[
G: \mathbb{R}^d \times \mathbb{R}^d \setminus \Delta \to [0, \infty)
\]
such that  
\begin{align*}
	\begin{split}
		&\int_{\mathcal{B}}\int_{(\mathbb{R}^d \times \mathbb{R}^d) \setminus \Delta} G(x,y) \, \mu_{\omega^-}(\mathrm{d}x) \mu_{\omega^-}(\mathrm{d}y)\ \mathbf{P}(\mathrm{d}\omega^-) \\&=\int_{\mathcal{B}}\int_{\tilde{\mathcal{B}}}\int_{(\mathbb{R}^d \times \mathbb{R}^d) \setminus \Delta} G(x,y) \, \mu_{\omega^-}(\mathrm{d}x) \mu_{\omega^-}(\mathrm{d}y)\ \mathbf{P}(\mathrm{d}\omega^-)\mathbf{P}(\mathrm{d}\omega^+)< \infty
	\end{split}
\end{align*}
and  
\begin{align}\label{GBAss}
	G(x,y) \to \infty \quad \text{as} \quad \vert x - y \vert \to 0.
\end{align}
For the construction of such a function we refer to \cite[Lemma 2.19]{FGS16a}. 
We further set
\begin{align*}
	R(\omega^-,\omega^+):=\int_{(\mathbb{R}^d \times \mathbb{R}^d) \setminus \Delta} G(x,y) \, \mu_{\omega^-}(\mathrm{d}x) \mu_{\omega^-}(\mathrm{d}y).
\end{align*}
Then, from Lemma \ref{pushforward} and the fact that $(\theta_t)_{t\geq 0}$ is $\mathbf{P}\times\mathbf{P}$ invariant, 
it follows for every \( t > 0 \) that
\begin{align}\label{TGB2}
	\begin{split}
		&\int_{\mathcal{B}}\int_{\tilde{\mathcal{B}}}\int_{(\mathbb{R}^d \times \mathbb{R}^d) \setminus \Delta} G(x,y) \, \mu_{\omega^-}(\mathrm{d}x) \mu_{\omega^-}(\mathrm{d}y)\ \mathbf{P}(\mathrm{d}\omega^-)\mathbf{P}(\mathrm{d}\omega^+)\\&=\int_{\mathcal{B}}\int_{\tilde{\mathcal{B}}}R(\omega^-,\omega^+)\mathbf{P}\times \mathbf{P}\left(\mathrm{d}(\omega^-,\omega^+)\right)=\int_{\mathcal{B}}\int_{\tilde{\mathcal{B}}}R\left(\theta_{t}(\omega^-,\omega^+)\right)\mathbf{P}\times \mathbf{P}\left(\mathrm{d}(\omega^-,\omega^+)\right).
	\end{split}
\end{align}
From Lemma \ref{RAND_MEASURE} we have $(\Phi^{t}_{\omega})_{\star}\mu_{\omega^-} \;=\; \mu_{P_{t}(\omega^-,\omega^+)}.
$
With a slight abuse of notation, this yields
\[
(\Phi^{t}_{\omega})_{\star} \left(\mu_{\omega^-} \times \mu_{\omega^-} \right) = \mu_{P_{t}(\omega^-,\omega^+)} \times \mu_{P_{t}(\omega^-,\omega^+)}.
\]
Therefore
\begin{align*}
	\begin{split}
		&R\left(\theta_{t}(\omega^-,\omega^+)\right)=\int_{(\mathbb{R}^d \times \mathbb{R}^d) \setminus \Delta} G(x,y) \, (\Phi^{t}_{\omega})_{\star}\left(\mu_{\omega^-}(\mathrm{d}x) \mu_{\omega^-}(\mathrm{d}y)\right)\\&=\int_{(\mathbb{R}^d \times \mathbb{R}^d) \setminus \Delta} G\left(\Phi^{t}_{\omega}(x),\Phi^{t}_{\omega}(y)\right)\mu_{\omega^-}(\mathrm{d}x) \mu_{\omega^-}(\mathrm{d}y).
	\end{split}
\end{align*}
Consequently, from 
\eqref{TGB2} we deduce that 
\begin{align}\label{GABSs}
	\begin{split}
		&\int_{\mathcal{B}}\int_{\tilde{\mathcal{B}}}\int_{(\mathbb{R}^d \times \mathbb{R}^d) \setminus \Delta} G(x,y) \, \mu_{\omega^-}(\mathrm{d}x) \mu_{\omega^-}(\mathrm{d}y)\ \mathbf{P}(\mathrm{d}\omega^-)\mathbf{P}(\mathrm{d}\omega^+)\\&=\quad \int_{\mathcal{B}}\int_{\tilde{\mathcal{B}}}\int_{(\mathbb{R}^d \times \mathbb{R}^d) \setminus \Delta} G\left(\Phi^{t}_{\omega}(x),\Phi^{t}_{\omega}(y)\right)\mu_{\omega^-}(\mathrm{d}x) \mu_{\omega^-}(\mathrm{d}y)\ \mathbf{P}(\mathrm{d}\omega^-)\mathbf{P}(\mathrm{d}\omega^+)<\infty.
	\end{split}
\end{align}
In particular, for every $\epsilon > 0$, we can choose $n \in \mathbb{N}$ sufficiently large such that for $\mathcal{M} := \mathcal{M}^n$, with $\mathcal{M}^n$ constructed in the proof of Corollary~\ref{STABSS}, we have 
$(\mu \times \mathbf{P})(\mathcal{M}) > 1 - \epsilon$. Note that we have:
\begin{enumerate}
	\item $R^{\nu}(x, \omega) \geq \frac{1}{n}$ for every $(x, \omega) \in \mathcal{M}$,
	\item $\Pi_{\Omega}(\mathcal{M})$ is measurable.
\end{enumerate}
Thus, by Fubini's theorem, we obtain
\begin{align}\label{8sdde}
	\int_{\Pi_{\Omega}(\mathcal{M})} \mu_{\omega^-}\left(\mathcal{M}(\omega)\right)\ \mathbf{P} \times \mathbf{P}(\mathrm{d}(\omega^-, \omega^+)) > 1 - \epsilon.
\end{align}
Since \( \mu_{\omega^-} \) does not have point masses, it follows once again from Fubini's theorem that
\[
(\mu_{\omega^-} \times \mu_{\omega^-})(\Delta) = 0 \quad \mathbf{P}\text{-a.s.}
\]
Let \( \{D_i\}_{i \geq 1} \) be a countable family of rectangles covering \( \mathbb{R}^d \times \mathbb{R}^d \), each with diameter less than \( \frac{1}{n} \) such that 
\begin{align*}
	D_i \cap D_j \subset \partial D_i \cap \partial D_j, \quad i \neq j.
\end{align*}
Since \( \mu_{\omega^-} \) has no atoms, it follows from Fubini's theorem for every \( i \geq 1 \) that

\begin{align}\label{sddsssw}
	(\mu_{\omega^-} \times \mu_{\omega^-})\left(\partial D_i\right) = 0 \quad {\mathbf{P}}\text{-a.s.}
\end{align}
We set for $i\geq 1$
\begin{align*}
	H_i(x,y,\omega):=\chi_{D_i}(x,y)\liminf_{n\rightarrow \infty} G\left(\Phi^{n}_{\omega}(x),\Phi^{n}_{\omega}(y)\right).
\end{align*}
Note that $R^{\nu}(x, \omega) \geq \frac{1}{n}$. Thus, for every $(x, \omega) \in \mathcal{M}$, it follows from the second assertion of Theorem~\ref{ASASiid}  and~\eqref{GBAss} that
\begin{align}\label{5sadfsr}
	H_i(x,y,\omega)=
	\begin{cases}
		\begin{split}
			& \infty, \quad \text{if} \quad x, y \in D_i,\\
			&0, \quad \text{otherwise}.
		\end{split}
	\end{cases}
\end{align}
From \eqref{GABSs}, \eqref{sddsssw}, Fatou's Lemma and the monotone convergence theorem, we deduce that
\begin{align*}
	\sum_{i\geq 1}\int_{\Pi_{\Omega}(\mathcal{M})}\int_{(\mathcal{M}(\omega) \times \mathcal{M}(\omega)) \setminus \Delta} H_{i}(x,y,\omega)\mu_{\omega^-}(\mathrm{d}x) \mu_{\omega^-}(\mathrm{d}y)\ \mathbf{P}\times\mathbf{P}(\mathrm{d}(\omega^-,\omega^+))<\infty.
\end{align*}
From \eqref{5sadfsr} and using again the fact that almost surely $\mu_{\omega^-}$ has no point masses, we conclude that for \( \mathbf{P}\times\mathbf{P} \)-almost every \( \omega \in \Pi_{\Omega}(\mathcal{M}) \), we have
\begin{align}\label{sayds}
	(\mu_{\omega^-}\times \mu_{\omega^-})\left(\left((\mathcal{M}(\omega) \times \mathcal{M}(\omega)) \setminus \Delta\right)\cap D_i\right)=(\mu_{\omega^-}\times \mu_{\omega^-})\left(\left(\mathcal{M}(\omega) \times \mathcal{M}(\omega)\right)\cap D_i\right)=0.
\end{align}
Recalling \eqref{sddsssw} and the fact that $\bigcup_{i \geq 1} D_i = \mathbb{R}^d \times \mathbb{R}^d$, this further implies that 
\begin{align*}
	(\mu_{\omega^-}\times \mu_{\omega^-})\left(\left(\mathcal{M}(\omega) \times \mathcal{M}(\omega)\right)\right)=\mu_{\omega^-}^2(\mathcal{M}(\omega))=0 \quad {\mathbf{P}\times\mathbf{P}}\text{-a.s.}
\end{align*}
which contradicts \eqref{8sdde}. In conclusion, it is not possible for the almost surely constant function \( \mathcal{H} \) to be equal to zero. Therefore, the claim follows from \textbf{Case I}. 
\qed 
\subsection{Random attractors}\label{RAAT}
We now establish the existence of pullback attractors. To this end, we introduce the following definition tailored to the construction of a random dynamical system provided in~\cite{BNGH26}. 
To distinguish between this definition and  Definition~\ref{ATTRACTOR}, we use the term \( \mathrm{P} \)-pullback attractor to emphasize that this depends only on the past of the noise, which is natural since the SDS filters out the future of the noise.
\begin{definition}\label{attractor}  
	A random compact set $\mathcal{A} = \{ \mathcal{A}(\omega^-) \}_{\omega^- \in \mathcal{B}}$ is called a \(\mathrm{P}\)-pullback attractor for the SDS \(\varphi\) if the following conditions are satisfied.
	\begin{enumerate}
		\item For every $\omega^-, \omega^+ \in \mathcal{B}$, we have
		\[
		\Phi^{t}_{(\omega^-,\omega^+)}(\mathcal{A}(\omega^-)) = \mathcal{A}(P_t(\omega^-,\omega^+)).
		\]
		
		\item For every compact set 
		\( K \subset \mathbb{R}^d \)
		\[
		\lim_{t \to \infty} \sup_{x \in K} d(\varphi^{t}_{\omega^-}(x), {\mathcal{A}}(\omega^-)) = 0 \quad \mathbf{P}\text{-a.s.}
		\]
	\end{enumerate}
\end{definition}
\begin{remark}
	\begin{enumerate}
		\item Similar to Definition~\ref{ATTRACTOR}, other notions such as \(\mathrm{P}\)-weak attractor or \(\mathrm{P}\)-weak point attractor can be analogously defined.
		\item 
		Note that the definition of the \( \mathrm{P} \)-pullback attractor is essentially the same as the definition of a pullback attractor given in Definition~\ref{ATTRACTOR}. 
		In particular, one can see that
		\begin{align}\label{ABNYAUse}
			\Phi^{t}_{\theta_{-t}(\omega^{-}, \omega^{+})}(x)
			= \varphi^{t}_{\omega^{-}}(x),
		\end{align}
		meaning that the second property in Definition \ref{attractor} is the pullback attraction as stated in Definition \ref{ATTRACTOR}.
		However, the main motivation for using this definition is the fact that the pullback attractor \( \mathcal{A}(\omega^{-}, \omega^{+}) \) depends only on \( \omega^{-} \), in the spirit of \cite[Corollary~4.5]{HDS09}.  
		Thus, thanks to \eqref{ABNYAUse} and to the fact that the SDS filters out the future of the noise, it is natural to formulate the invariance property in Definition~\ref{attractor} in terms of the SDS and not of the RDS as in Definition \ref{ATTRACTOR}.
		\item For the uniqueness of attractors, we refer to \cite[Section~5]{C99}, in particular \cite[Corollary~5.8]{C99}.
	\end{enumerate}
\end{remark}
We can prove the following statement. 
\begin{theorem}\label{ATTRRP}
	There exists a \(\mathrm{P}\)-pullback attractor \(\mathcal{A}\) for the SDS \(\varphi\) generated by the SDE \eqref{MAIN}.
\end{theorem}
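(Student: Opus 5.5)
The plan is the standard absorbing-set route: build a tempered random ball that absorbs compact sets in the pullback sense, and take as attractor its omega-limit set. Since everything should depend only on $\omega^-$, I will work directly with the SDS $\varphi^t_{\omega^-}(x)=\Phi^t_{\theta_{-t}(\omega^-,\omega^+)}(x)$ from \eqref{ABNYAUse}.

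\textbf{Step 1: an a priori estimate.} Set $Z_t:=\sigma B^H_t$ and write $Y_t=\Phi^t_\omega(x)=V_t+Z_t$. Then $V$ solves the random ODE $\dot V_t=F(V_t+Z_t)$. Assumption~\ref{Drift} 2) with $\xi_2=V_t+Z_t$, $\xi_1=Z_t$ gives
\[
\tfrac{\mathrm{d}}{\mathrm{d}t}|V_t|^2\le 2C_1^F-2C_2^F|V_t|^2+2|F(Z_t)||V_t|
\le 2C_1^F-C_2^F|V_t|^2+\tfrac{1}{C_2^F}|F(Z_t)|^2 .
\]
Gronwall then yields
\[
|V_t|^2\le e^{-C_2^F t}|x|^2+\int_0^t e^{-C_2^F(t-s)}\bigl(2C_1^F+C^{-1}|F(Z_s)|^2\bigr)\,\mathrm{d}s .
\]
Evaluating this in pullback form along $\theta_{-t}(\omega^-,\omega^+)$, the noise increments are $\sigma(B^H_{s-t}-B^H_{-t})$. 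By Lemma~\ref{RR_TT} and \eqref{RR_TT2}, these are expressed through $\mathcal{D}_H\omega^-$ on $[-t,0]$ only.

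\textbf{Step 2: a tempered absorbing ball.} The estimate gives $|\varphi^t_{\omega^-}(x)|\le e^{-C_2^Ft/2}|x|+\rho(\omega^-)$, where
\[
\rho(\omega^-)^2:=C+C\int_{-\infty}^0 e^{C_2^F r}\bigl(1+|\mathcal{D}_H\omega^-(r)|\bigr)^{2N}\,\mathrm{d}r+C\sup_{r\le0}\ldots
\]
plus the usual correction from the shift $-B^H_{-t}$; it is cleanest to center at $\sigma B^H$ and add $|\sigma\mathcal{D}_H\omega^-(0)|=0$. Finiteness and temperedness of $\rho$ follow from the polynomial growth in Assumption~\ref{Drift} 3), from the sublinear growth $|\mathcal{D}_H\omega^-(r)|\lesssim(1+|r|)^{1/2+\delta}$ implied by $\mathcal{D}_H\omega^-\in\mathcal{B}_{1-H}$ (Definition~\ref{BB_HH}), and from Gaussian moments together with Remark~\ref{TEMOER}. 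Hence the ball $\mathcal{E}(\omega^-):=\bar B(0,2\rho(\omega^-))$ absorbs every compact $K$ in the sense that $\varphi^t_{\omega^-}(K)\subset \mathcal{E}(\omega^-)$ for all $t\ge T(K,\omega^-)$.

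\textbf{Step 3: definition of the attractor and its properties.} Define
\[
\mathcal{A}(\omega^-):=\bigcap_{s\ge0}\overline{\bigcup_{t\ge s}\varphi^t_{\vartheta_t\omega^-}\bigl(\mathcal{E}(\vartheta_t\omega^-)\bigr)} .
\]
The factor $\vartheta_t$ appears because pulling back by time $t$ moves the past noise to $\vartheta_t\omega^-$, using the SDS cocycle \eqref{cocycle_SDS}. The required properties then follow by the classical Crauel--Flandoli argument.

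\emph{Nonemptiness and compactness.} $\mathcal{A}(\omega^-)$ is a decreasing intersection of nonempty compact sets inside $\mathcal{E}(\omega^-)$. For $t$ large, absorption gives $\varphi^t_{\vartheta_t\omega^-}(\mathcal{E}(\vartheta_t\omega^-))\subset\mathcal{E}(\omega^-)$, which uses temperedness of $\rho$ against the exponential contraction $e^{-C_2^Ft/2}$.

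\emph{Attraction of compact $K$.} This follows by the standard contradiction argument using sequential compactness in $\mathcal{E}(\omega^-)$.

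\emph{Measurability.} Use continuity of $\varphi$ in $(x,\omega^-)$ from Definition~\ref{SDSSS} and a countable dense set of times.

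\emph{Invariance.} Write $\omega=(\omega^-,\omega^+)$, use $\Phi^t_\omega=\varphi^t_{P_t(\omega^-,\omega^+)}$ and the identity $\vartheta_t P_t(\omega^-,\omega^+)=\omega^-$ from Definition~\ref{SDS}(iv). Then show $\varphi^t_{P_t\omega}(\mathcal{A}(\omega^-))=\mathcal{A}(P_t\omega)$ via the cocycle property. The inclusion ``$\subseteq$'' comes from continuity of $\varphi^t$. The inclusion ``$\supseteq$'' comes from extracting convergent subsequences of preimages in the compact absorbing set.

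\textbf{Main obstacle.} I expect the hardest step to be Step 2, together with the invariance identity in Step 3 for \emph{every} $(\omega^-,\omega^+)$ rather than almost surely. For the first, one must control the pullback noise path with only the regularity of $\mathcal{B}_{1-H}$ and ensure temperedness along $\vartheta_t$. For the second, one likely has to restrict to a $\vartheta$-forward-invariant full-measure set of $\omega^-$, as in Lemma~\ref{RAND_MEASURE}, and redefine $\mathcal{A}$ off this set.
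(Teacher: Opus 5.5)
Your strategy (dissipative a priori bound, then a tempered absorbing ball, then the pullback $\omega$-limit set) is viable and differs from the paper's. However, two steps fail as written.

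\textbf{Centering.} Step 1 subtracts $\sigma B^H_t$ normalised by $B^H_0=0$ at the \emph{initial} time. Along $\theta_{-t}\omega$ this process equals $\sigma(\mathcal{D}_H\omega^-(s-t)-\mathcal{D}_H\omega^-(-t))$. Hence $\varphi^t_{\omega^-}(x)=V_t-\sigma\mathcal{D}_H\omega^-(-t)$, and the Gronwall integral contains $|F(\sigma(\mathcal{D}_H\omega^-(r)-\mathcal{D}_H\omega^-(-t)))|^2$ for $r$ near $0$. Since $|\mathcal{D}_H\omega^-(-t)|$ is a fractional Brownian motion and is unbounded in $t$, the bound is not uniform in $t$. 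It gives no absorbing ball, and a ``correction'' does not repair this.

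Your formula for $\rho$ actually presupposes centering at the two-sided path. For $r\in[-t,0]$ let $y(r):=\Phi^{r+t}_{\theta_{-t}\omega}(x)$ and $v(r):=y(r)-\sigma\mathcal{D}_H\omega^-(r)$. Then $\dot v=F(v+\sigma\mathcal{D}_H\omega^-(r))$, $v(-t)=x-\sigma\mathcal{D}_H\omega^-(-t)$, and $v(0)=\varphi^t_{\omega^-}(x)$ because $\mathcal{D}_H\omega^-(0)=0$. Your Step 1 inequality holds for any continuous centering path, so it gives
\[
|\varphi^t_{\omega^-}(x)|\le e^{-C_2^Ft/2}\bigl(|x|+|\sigma\mathcal{D}_H\omega^-(-t)|\bigr)+\Bigl(\int_{-\infty}^0 e^{C_2^Fr}\bigl(2C_1^F+\tfrac{1}{C_2^F}|F(\sigma\mathcal{D}_H\omega^-(r))|^2\bigr)\,\mathrm{d}r\Bigr)^{1/2}.
\]
This is the real a priori estimate and should be stated explicitly.

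\textbf{Indexing of the attractor.} By \eqref{ABNYAUse}, $\varphi^t_{\omega^-}=\Phi^t_{\theta_{-t}\omega}$ is already the pullback map, and the shift enters only through the fibre of the absorbing set. The correct object is therefore $\bigcap_{s\ge0}\overline{\bigcup_{t\ge s}\varphi^t_{\omega^-}(\mathcal{E}(\vartheta_t\omega^-))}$, as in \eqref{ASSQ5sd}.

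Your map $\varphi^t_{\vartheta_t\omega^-}$ instead transports the fibre over $\vartheta_{2t}\omega^-$ to the fibre over $\vartheta_t\omega^-$. Your own Step 2 bound controls its image of $\mathcal{E}(\vartheta_t\omega^-)$ only by $\rho(\vartheta_t\omega^-)$, not by $\rho(\omega^-)$. So the claimed inclusion into $\mathcal{E}(\omega^-)$ does not follow. The cocycle also cannot telescope in the invariance step, because the inner map is $\varphi^s_{\vartheta_s\omega^-}$ rather than $\varphi^s_{\omega^-}$.

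With the fix, set $\omega':=P_t(\omega^-,\omega^+)$, so that $\vartheta_t\omega'=\omega^-$. Then $\varphi^t_{\omega'}\circ\varphi^s_{\omega^-}=\varphi^{t+s}_{\omega'}$ and $\mathcal{E}(\vartheta_s\omega^-)=\mathcal{E}(\vartheta_{t+s}\omega')$, and your continuity/compactness argument goes through.

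Your worry about exceptional sets is unnecessary. By \eqref{RR_TT2} and the weight $\sqrt{1+|t|+|s|}$ in the $\mathcal{B}_{1-H}$-norm, $\rho(\vartheta_t\omega^-)$ grows at most polynomially in $t$ for \emph{every} $\omega^-$. Hence absorption, compactness and invariance hold for all $(\omega^-,\omega^+)$.

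\textbf{Comparison with the paper.} Once repaired, your argument differs from the paper's in the process you subtract. The paper removes the stationary fractional Ornstein--Uhlenbeck process \eqref{stationary}. This costs an improper Young integral, the estimate \eqref{TRANST}, and an extra term $\langle Z+F(Z),\kappa\rangle$, but stationarity makes the pulled-back process $\tilde Z$ depend only on $\omega^-$ automatically. You remove the noise path itself, which needs no integration against the fBm. You use only the polynomial growth of $F$ and $|\mathcal{D}_H\omega^-(r)|\le\|\mathcal{D}_H\omega^-\|_{\mathcal{B}_{1-H}}(1+|r|)^{(1+H)/2}$. Your leftover term $e^{-C_2^Ft/2}|\sigma\mathcal{D}_H\omega^-(-t)|$ plays the role of the paper's $e^{-C_2^Ft/2}|\tilde Z_{-t}(\omega^-)|$. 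You also spell out the $\omega$-limit argument, where the paper only cites \cite[Theorem~3.5]{FS96}.
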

\proof
From Theorem~\ref{RDS_SDS_MEAU}, it follows that for all $t \geq 0$ and $x \in \mathbb{R}^d$, we have
\begin{align*}
	\Phi^{t}_{(\omega^-,\omega^+)}(x)=\int_{0}^{t}F\left(\Phi^{s}_{(\omega^-,\omega^+)}(x)\right)\mathrm{d}s+x-\sigma (	\mathcal{D}_{H}P_{t}(\omega^-,\omega^+))(-t),
\end{align*}
where we recall that $B^{H}_{t}\bigl(P_{t}(\omega^-, \omega^+)\bigr) = - \bigl(\mathcal{D}_{H} P_{t}(\omega^-, \omega^+)\bigr)(-t)$.
We define the stationary fractional Ornstein-Uhlenbeck process
\begin{align} \label{stationary}
	Z_{t}(\omega):=\sigma\int_{-\infty}^{0}\exp(-(t-\tau))\mathrm{d}(\mathcal{D}_{H}\omega^-)(\tau)+\sigma\int_{0}^{t}\exp(-(t-\tau))\mathrm{d}B^{H}_{\tau}(P_{\tau}(\omega^-,\omega^+)).
\end{align}
To see that this is well-defined as a Young integral on a fixed time interval, we use Definition~\ref{BB_HH} together with Lemma~\ref{AJSJAsd} which ensure that the mapping  
\[
\tau \mapsto B^{H}_{\tau}\bigl(P_{\tau}(\omega^-, \omega^+)\bigr)
\]  
is \(\frac{H}{2}\)-H\"older continuous. 
In~\eqref{TRANST}, we show that the improper integral in~\eqref{stationary} is well defined. Keeping these facts in mind, it follows from~\eqref{stationary} that $Z$ is the stationary solution of the SDE
\begin{align*}
	\mathrm{d}Z_t(\omega) = -Z_t(\omega)\,\mathrm{d}t + \sigma\,\mathrm{d}B^H_t\big(P_t(\omega^-, \omega^+)\big).
\end{align*}
Consequently, for every $0 \leq s \leq t$
\begin{align*}
	Z_t(\omega) - Z_s(\omega) = -\int_s^t Z_\tau(\omega)\,\mathrm{d}\tau + \sigma\left( B^H_t\left(P_{t}(\omega^-,\omega^+)\right) - B^H_s\left(P_{s}(\omega^-,\omega^+)\right)\right).
\end{align*}
Recall that for $t \geq 0$, we have $\theta_{-t}\omega = \left(\vartheta_t \omega^-, P_t(\omega^+, \omega^-)\right)$. Thus, from \eqref{RR_TT1} and \hyperref[IV]{$(iv)$} in Definition~\ref{SDS}, it follows that for all $0 \leq \tau \leq t$
\begin{align*}
	\begin{split}
		&B^{H}_{\tau}\left(P_{\tau}\left(\vartheta_t \omega^-, P_t(\omega^+, \omega^-)\right)\right)=- \left(\mathcal{D}_{H}P_{\tau}\left(\vartheta_t \omega^-, P_t(\omega^+, \omega^-)\right)\right)(-\tau)\\&=\mathcal{D}_{H} P_{t}\left(\vartheta_t \omega^-, P_t(\omega^+, \omega^-)\right)(\tau - t) - \mathcal{D}_{H} P_{t}\left(\vartheta_t \omega^-, P_t(\omega^+, \omega^-)\right)(-t)\\&=(\mathcal{D}_{H}\omega^-)(\tau-t)-(\mathcal{D}_{H}\omega^-)(-t).
	\end{split}
\end{align*}
This further implies using \eqref{RR_TT2} that
\begin{align}
	Z_{s}(\theta_{-t}\omega)&=\sigma\int_{-\infty}^{0}\exp(-(s-\tau))\mathrm{d}(\mathcal{D}_{H}\vartheta_t\omega^-)(\tau)\nonumber\\&+\sigma\int_{0}^{s}\exp(-(s-\tau))\mathrm{d}B^{H}_{\tau}\left(P_{\tau}(\vartheta_t \omega^-, P_t(\omega^+, \omega^-))\right)=\sigma\int_{-\infty}^{s-t}\exp(t-s+\tau)\mathrm{d}(\mathcal{D}_{H}\omega^-)(\tau) \nonumber\\&=\sigma\int_{-\infty}^{0}\exp(\tau)\mathrm{d}(\mathcal{D}_{H}\vartheta_{t-s}\omega^-)(\tau)=:\tilde{Z}_{s-t}(\omega^-).\label{tilde}
\end{align}
which shows the stationarity of $Z$. 
From \eqref{norm:BH}, \eqref{RR_TT2}, and Lemma~\ref{AJSJAsd}, we infer that for every \(n\in\mathbb{N}_0\)
\begin{align*}
	\|\mathcal{D}_H \vartheta_{t-s}\omega^-\|_{\frac H2;[-n-1,-n]}
	&= \sup_{u,v\in[-n-1,-n]} \frac{\bigl|\mathcal{D}_H \omega^-(s-t+u) - \mathcal{D}_H \omega^-(s-t+v)\bigr|}{|u-v|^{\frac H2}} \\
	&\lesssim \sqrt{1 + n + t - s}\,\|\mathcal{D}_H \omega^-\|_{\mathcal{B}_{1-H}}.
\end{align*}
Hence, we can estimate the Young integral by
\begin{align}\label{TRANST}
	\begin{split}
		\left|\int_{-\infty}^0 e^{\tau}\,\mathrm{d}\bigl(\mathcal{D}_H \vartheta_{t-s}\omega^-\bigr)(\tau)\right|
		&\le \sum_{n=0}^\infty \left|\int_{-n-1}^{-n} e^{\tau}\,\mathrm{d}\bigl(\mathcal{D}_H \vartheta_{t-s}\omega^-\bigr)(\tau)\right| \\
		&\lesssim \|\mathcal{D}_H \omega^-\|_{\mathcal{B}_{1-H}} \sum_{n=0}^\infty e^{-n}\sqrt{1 + n + t - s}.
	\end{split}
\end{align}

Now for $\kappa_t(\omega, x) := \Phi^t_{(\omega^-, \omega^+)}(x) - Z_t(\omega)$, we have
\begin{align}\label{ATTRRATT}
	\begin{split}
		\frac{\mathrm{d}}{\mathrm{d} t}\left|\kappa_t(\omega, x)\right|^2 &= 2\left\langle F\left(\kappa_t(\omega, x)+Z_{t}(\omega)\right) - F(Z_{t}(\omega)), \kappa_t(\omega, x) \right\rangle \\
		& + 2\left\langle Z_{t}(\omega) + F(Z_{t}(\omega)), \kappa_t(\omega, x) \right\rangle \\
		&\leq 2C_{1}^{F}-2C_{2}^{F}\left|\kappa_t(\omega, x)\right|^2+\frac{1}{C_2^F} \left|Z_{t}(\omega) + F(Z_{t}(\omega))\right|^2+C_2^F \left|\kappa_t(\omega, x)\right|^2\\
		&= -C_2^F \left|\kappa_t(\omega, x)\right|^2 +2C_1^F + \frac{1}{C_2^F} \left|Z_{t}(\omega) + F(Z_{t}(\omega))\right|^2\\
		&\leq -C_2^F \left|\kappa_t(\omega, x)\right|^2 + \bar{C}^F (1 + |Z_{t}(\omega)|)^{2N},
	\end{split}
\end{align}
for an arbitrary constant \(\bar{C}^F\). Hence, by Gr\"onwall's inequality
\begin{align*}
	\left|\kappa_t(\omega, x)\right|^2\leq \exp(-C_{2}^{F}t)\vert x-Z_{0}(\omega)\vert^2+\bar{C}^F\int_{0}^{t}\exp\left(-C_{2}^{F}(t-\tau)\right)\left(1 + |Z_{\tau}(\omega)|\right)^{2N}\mathrm{d}\tau.
\end{align*}
Thus replacing $\omega$ by \( \theta_{-t}\omega \), we obtain based on~\eqref{tilde} 
\begin{align}\label{YHAs96}
	\begin{split}
		&\left\vert\phi^{t}_{\omega^-}(x)\right\vert\leq \exp(-\frac{C_{2}^{F}}{2}t)\left(\vert x\vert+\vert Z_{0}(\theta_{-t}\omega)\vert\right)\\&+\vert Z_{t}(\theta_{-t}\omega)\vert+\sqrt{\bar{C}^F}\sqrt{\int_{0}^{t}\exp\left(-C_{2}^{F}(t-\tau)\right)\left(1 + |Z_{\tau}(\theta_{-t}\omega)|\right)^{2N}\mathrm{d}\tau}\\
		& = \exp(-\frac{C_{2}^{F}}{2}t)\left(\vert x\vert+\vert \tilde{Z}_{-t}(\omega^-)\vert\right)+\vert \tilde{Z}_{0}(\omega^-)\vert+\sqrt{\bar{C}^F}\sqrt{\int_{-t}^{0}\exp\left(C_{2}^{F}\tau\right)\left(1 + |\tilde{Z}_{\tau}(\omega^-)|\right)^{2N}\mathrm{d}\tau}\\
		&\leq \exp(-\frac{C_{2}^{F}}{2}t)\left(\vert x\vert+\vert Z_{-t}(\omega^-)\vert\right)+\vert \tilde{Z}_{0}(\omega^-)\vert+\sqrt{\bar{C}^F}\sqrt{\int_{-\infty}^{0}\exp\left(C_{2}^{F}\tau\right)\left(1 + |\tilde{Z}_{\tau}(\omega^-)|\right)^{2N}\mathrm{d}\tau},
	\end{split}
\end{align}
where we used the following elementary inequalities 
\begin{align*}
	\lvert X \rvert - \lvert Y \rvert \leq \lvert X - Y \rvert \leq \lvert X \rvert + \lvert Y \rvert, \quad \text{and} \quad \sqrt{\lvert X \rvert + \lvert Y \rvert} \leq \sqrt{\lvert X \rvert} + \sqrt{\lvert Y \rvert}.
\end{align*}
We further set 
\begin{align*}
	\tilde{\rho}(\omega)=\rho(\omega^-):=\vert \tilde{Z}_{0}(\omega^-)\vert+\sqrt{\int_{-\infty}^{0}\bar{C}^F\exp\left(C_{2}^{F}\tau\right)\left(1 + |\tilde{Z}_{\tau}(\omega^-)|\right)^{2N}\mathrm{d}\tau}.
\end{align*}
Note that $\tilde{\rho}(\omega)$ is tempered. To see this, by Remark~\ref{TEMOER}, it suffices to verify that  
\[
\E\!\left[\sup_{t \in [0,1]} \tilde{\rho}(\theta_t \omega)\right] < \infty.
\]
This follows directly from~\eqref{TRANST} together with Fernique's theorem~\cite[Theorem~4]{Led96}.
Since for the first term in~\eqref{YHAs96} we have for every $\omega^- \in \mathcal{B}$ that
\begin{align*}
	\lim_{t\rightarrow \infty} \exp(-\frac{C_{2}^{F}}{2}t)\left(\vert x\vert+\vert \tilde{Z}_{-t}(\omega^-)\vert\right) =0,
\end{align*}
we get that
\( B(0, \epsilon + \rho(\omega^-) ) \)  is a random absorbing set for $\phi$. Then according to~\cite[Theorem~3.5]{FS96}, the pullback attractor is given by 
\begin{align}\label{ASSQ5sd}
	\cA(\omega^-)=\bigcap_{s\geq 0}\bigcup_{t\geq s}\Phi^{t}_{\theta_{-t}\omega}\left(B(0, \epsilon + \tilde{\rho}(\theta_{-t}\omega) )\right)=\bigcap_{s\geq 0}\bigcup_{t\geq s}\phi^{t}_{\omega^-}\left(B(0, \epsilon + \rho(\vartheta_{t}\omega^-) )\right).
\end{align}
\qed
\begin{remark}
	It is straightforward to verify that the collection of tempered sets $\mathcal{D}(\mathbb{R}^d)$ is inclusion-closed \cite[Definition~3.2]{FS96}. Consequently, for the $\mathrm{P}$-pullback attractor established in Theorem~\ref{ATTRRP}, we conclude that it attracts tempered sets and not only deterministic compact sets.
\end{remark}
\section{Synchronization by noise}\label{sec:main}
We are now ready to present the main result of this manuscript. We first show that the support of the disintegrated measures $(\mu_{\omega^{-}})_{\omega^{-}\in \tilde{B}}$ is contained in any $\mathrm{P}$-weak point attractor. We then explore this relation further with the minimal $\mathrm{P}$-weak point attractor. Finally, we prove weak synchronization under certain conditions on $F$ and $\sigma$.
\subsection{Support of the invariant measure of the RDS}\label{SUPPAS}
We begin with the following useful lemma, which relates time averages to ensemble averages in our setting.
\begin{lemma}\label{YAs85sad}
	Let $x_0\in\R^d$ and $G\in  \mathrm{BL}_{\mathcal{B}}(\mathbb{R}^d)$. Then
	\[
	\lim_{t\rightarrow\infty}\frac{1}{t}\int_{0}^{t}\int_{\mathcal{B}}G(\varphi^{s}_{\omega^-}(x_0),\omega^-)\ \mathbf{P}(\mathrm{d\omega^-})\mathrm{d}s=\int_{\R^d\times \mathcal{B}}G(x,\omega^-)\mu\left(\mathrm{d}(x,\omega^-)\right).
	\]
\end{lemma}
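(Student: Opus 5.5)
The plan is to rewrite the left-hand side through the Feller semigroup $\mathcal{Q}_t$ of Definition~\ref{DEAFRS} on the extended space $\mathbb{R}^d\times\mathcal{B}$. Then we use the convergence of the law of the solution towards the invariant measure $\mu$, in a form that also tracks the noise component.

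\textbf{Step 1 (identifying the integrand).} Fix $x_0$ and $s\ge0$. By the pullback representation \eqref{ABNYAUse}, $\varphi^{s}_{\omega^-}(x_0)=\Phi^{s}_{\theta_{-s}(\omega^-,\omega^+)}(x_0)$. Since $\theta_{-s}$ preserves $\mathbb{P}=\mathbf{P}\times\mathbf{P}$, and since $\theta_s(\theta_{-s}\omega)=\omega$ carries first component $\omega^-$, we get
\[
\int_{\mathcal{B}}G(\varphi^{s}_{\omega^-}(x_0),\omega^-)\,\mathbf{P}(\mathrm{d}\omega^-)=\int_{\Omega}G\bigl(\Phi^{s}_{(\omega^-,\omega^+)}(x_0),P_s(\omega^-,\omega^+)\bigr)\,\mathbb{P}(\mathrm{d}\omega).
\]
By \eqref{Chapman} and Theorem~\ref{RDS_SDS_MEAU}(2), this equals $\int_{\mathcal{B}}\mathcal{Q}_s G(x_0,\omega^-)\,\mathbf{P}(\mathrm{d}\omega^-)=\int G\,\mathrm{d}\bigl((\mathcal{Q}_s)_\star(\delta_{x_0}\times\mathbf{P})\bigr)$. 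The statement therefore reduces to showing that the Ces\`aro averages $\nu_t:=\frac1t\int_0^t(\mathcal{Q}_s)_\star(\delta_{x_0}\times\mathbf{P})\,\mathrm{d}s$ satisfy $\int G\,\mathrm{d}\nu_t\to\int G\,\mathrm{d}\mu$.

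\textbf{Step 2 (tightness and identification of limits).} The $\mathcal{B}$-marginal of every $\nu_t$ is $\mathbf{P}$, because $\mathbf{P}$ is invariant for $\mathcal{P}_s$ by Definition~\ref{SDS}(iii). The $\mathbb{R}^d$-marginals are tight by the dissipative estimate \eqref{ATTRRATT}, which bounds $\mathbb{E}|\Phi^s(x_0)|^2$ uniformly in $s$ through the stationary process $Z$. Hence $(\nu_t)$ is tight. Since $\mathcal{Q}$ is Feller, the Krylov--Bogoliubov argument shows that every weak limit point is $\mathcal{Q}$-invariant with $\mathcal{B}$-marginal $\mathbf{P}$, i.e.\ an invariant measure for $\varphi$. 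Uniqueness in Theorem~\ref{RDS_SDS_MEAU}(3) then gives $\nu_t\to\mu$ weakly.

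\textbf{Step 3 (passing to random test functions).} This is the main obstacle. The function $G$ is only measurable in $\omega^-$, so weak convergence does not apply to it directly. I would exploit that every $\nu_t$ and $\mu$ share the fixed $\mathcal{B}$-marginal $\mathbf{P}$. By Lusin's theorem, $\omega^-\mapsto G(\cdot,\omega^-)$, viewed as a map into the bounded Lipschitz functions with the topology of local uniform convergence (which is separable on balls), is continuous on a compact set $K_\epsilon\subset\mathcal{B}$ with $\mathbf{P}(K_\epsilon)>1-\epsilon$. Extend this restriction by Tietze/Dugundji to a jointly continuous $G_\epsilon$ bounded by $M$ and agreeing with $G$ on $\mathbb{R}^d\times K_\epsilon$. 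Then
\[
\Bigl|\int (G-G_\epsilon)\,\mathrm{d}\nu_t\Bigr|\le 2M\,\mathbf{P}(K_\epsilon^c)\le 2M\epsilon,
\]
uniformly in $t$, and the same bound holds for $\mu$. Step 2 applied to $G_\epsilon$ and letting $\epsilon\to0$ conclude.

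A shortcut is the total-variation convergence \eqref{TOTAL}. It only controls the $\mathbb{R}^d$-marginal, however, so the joint statement still needs Steps 2 and 3.
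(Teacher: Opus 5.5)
Your proof is correct, and its skeleton matches the paper's. Both rewrite the left-hand side as $\int G\,\mathrm{d}\nu_t$ for the Ces\`aro averages of $(\mathcal{Q}_s)_\star(\delta_{x_0}\times\mathbf{P})$, establish tightness, use the Krylov--Bogoliubov identity \eqref{OLAS5sdwa} to show limit points are invariant, and conclude by uniqueness of $\mu$. The paper phrases this last step as a contradiction along a subsequence, which is only a cosmetic difference.

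You differ in two ingredients. For tightness, the paper uses the total-variation convergence \eqref{TOTAL} of the $\mathbb{R}^d$-marginals. You use the uniform second-moment bound from \eqref{ATTRRATT} and the Gaussian stationary process $Z$; either works.

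The more substantive difference is how the random test function $G$ is handled. The paper works from the start in the narrow topology on $\mathrm{Pr}_{\mathcal{B}}(\mathbb{R}^d)$. It invokes Prokhorov's theorem for random measures and the fact that this topology is generated by random bounded Lipschitz functions \cite[Theorem 4.4, Corollary 4.10]{CH02}, so convergence against $G$ is built into the topology. You instead take ordinary weak convergence on the Polish space $\mathbb{R}^d\times\mathcal{B}$ and upgrade it by Lusin's theorem plus an extension, using that every $\nu_t$ and $\mu$ has the same marginal $\mathbf{P}$. In effect you reprove that, for measures with a fixed marginal over a Polish base, weak and narrow convergence coincide.

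Your route is self-contained and makes explicit where the Feller property of $\mathcal{Q}_t$ enters. The paper's step from \eqref{OLAS5sdwa} to invariance of a narrow limit point implicitly needs that property too. The paper's route is shorter but relies on the random-measure machinery.

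One simplification: once $\omega^-\mapsto G(\cdot,\omega^-)$ is locally uniformly continuous on $K_\epsilon$, $G$ is jointly continuous and bounded by $M$ on the closed set $\mathbb{R}^d\times K_\epsilon$. Choose $K_\epsilon$ inside the full-measure set where $\|G(\cdot,\omega^-)\|_{\mathrm{BL}}<M$. Then the bounded Tietze extension theorem already gives $G_\epsilon$, and Dugundji is not needed.
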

\proof
We prove this by contradiction. Assume that for some fixed $x_0 \in \mathbb{R}^d$ and $\tilde{G} \in \mathrm{BL}_{\mathcal{B}}(\mathbb{R}^d)$, there exists an increasing sequence $\{t_n\}_{n \geq 0}$ converging to infinity and a constant $\delta > 0$ such that for every $t_n$ 
\begin{align}\label{AOOS}
	\left| \frac{1}{t_n} \int_{0}^{t_n} \int_{\mathcal{B}} \tilde{G}(\varphi^{s}_{\omega^-}(x_0), \omega^-) \, \mathbf{P}(\mathrm{d}\omega^-) \, \mathrm{d}s 
	- \int_{\mathbb{R}^d \times \mathcal{B}} \tilde{G}(x, \omega^-) \, \mu(\mathrm{d}(x, \omega^-)) \right| > \delta.
\end{align}
From \eqref{TOTAL}, we have for $t\geq 0$
\begin{align*}
	\mathcal{L}\left(\Phi^{t}_{(\omega^-,\omega^+)}(x_0)\right) \xrightarrow{t \to \infty} \left(\Pi_{\mathbb{R}^d}\right)_{\star} \mu,
\end{align*}
in total variation. Since for every \( t \geq 0 \), the random variables \( \Phi^{t}_{(\omega^-,\omega^+)}(x_0) \) and \( \Phi^{t}_{\theta_{-t}(\omega^-,\omega^+)}(x_0) = \varphi^{t}_{\omega^-}(x_0) \) have the same law, it holds that 
\begin{align}\label{YH5286as}
	\mathcal{L}\left(\varphi^{t}_{\omega^-}(x_0)\right) \xrightarrow{t \to \infty} \left(\Pi_{\mathbb{R}^d}\right)_{\star} \mu,
\end{align}
in total variation. Let \( \{ \nu_{t_n, x_0} \}_{t_n \geq 0} \) be a family of measures on \( \mathbb{R}^d \times \mathcal{B} \) defined by 
\begin{align*}
	\begin{split}
		\nu_{t_n,x_0}(\Gamma) := \frac{1}{t_n} \int_{0}^{t_n} \int_{\mathcal{B} \times \mathcal{B}} \chi_{\Gamma} \left( \varphi^{s}_{P_{s}(\omega^-,\omega^+)}(x_0), P_{s}(\omega^-,\omega^+) \right) \mathbb{P}(\mathrm{d}(\omega^-,\omega^+)) \, \mathrm{d}s \\
		= \frac{1}{t_n} \int_{0}^{t_n} \int_{\mathcal{B}} \chi_{\Gamma}(\varphi^{s}_{\omega^-}(x_0), \omega^-) \, \mathbf{P}(\mathrm{d}\omega^-) \, \mathrm{d}s,
	\end{split}
\end{align*}
for \( \Gamma \in \sigma(\mathbb{R}^d) \otimes \sigma(\mathcal{B}) \). Equivalently, this can be written using the transition semigroup $\{\cQ_t\}_{t>0}$ as
\begin{align*}
	\nu_{t_n,x_0}(\Gamma) = \frac{1}{t_n} \int_{0}^{t_n} \int_{\mathcal{B}} \mathcal{Q}_{s}\left((x_0,\omega^-); \Gamma \right) \, \mathbf{P}(\mathrm{d}\omega^-) \, \mathrm{d}s.
\end{align*}
From Definition \ref{DEAFRS}, we conclude that for every \( t \geq 0 \),
\begin{align}\label{OLAS5sdwa}
	\begin{split}
		(\mathcal{Q}_t)_{\star} \nu_{t_n,x_0}(\Gamma) &= \frac{1}{t_n} \int_{0}^{t_n} \int_{\mathcal{B}} \mathcal{Q}_{t+s}\left((x_0,\omega^-); \Gamma \right) \, \mathbf{P}(\mathrm{d}\omega^-) \, \mathrm{d}s \\
		&= \nu_{t_n,x_0}(\Gamma) + \frac{1}{t_n} \int_{t_n}^{t_n + t} \int_{\mathcal{B}} \mathcal{Q}_{s}\left((x_0,\omega^-); \Gamma \right) \, \mathbf{P}(\mathrm{d}\omega^-) \, \mathrm{d}s \\
		&\quad - \frac{1}{t_n} \int_{0}^{t} \int_{\mathcal{B}} \mathcal{Q}_{s}\left((x_0,\omega^-); \Gamma \right) \, \mathbf{P}(\mathrm{d}\omega^-) \, \mathrm{d}s.
	\end{split}
\end{align}
Note that \( \left(\Pi_{\mathbb{R}^d}\right)_{\star} \mu \) is a probability measure and hence regular. Thus, from \eqref{YH5286as}, we deduce that for every \( \epsilon > 0 \), there exists a compact subset \( K_{\epsilon} \subset \mathbb{R}^d \) and a time \( t_{\epsilon} > 0 \) such that for all \( t \geq t_{\epsilon} \),
\begin{align}\label{YH986a}
	\nu_{t,x_0}(K_{\epsilon} \times \mathcal{B}) > 1 - \epsilon.
\end{align}
This implies that the family \( \{ \nu_{t_n, x_0} \}_{n \geq 0} \) is tight in \( \mathrm{Pr}_{\mathcal{B}}(\mathbb{R}^d) \) according to \cite[Definition 4.2]{CH02}. By Prokhorov's theorem for random measures \cite[Theorem 4.4]{CH02}, it follows that this family is relatively compact with respect to the narrow topology on \( \mathrm{Pr}_{\mathcal{B}}(\mathbb{R}^d) \) which 
by \cite[Corollary 4.10]{CH02} is generated by the class of mappings
\begin{align*}
	\mathrm{Pr}_{\mathcal{B}}(\mathbb{R}^d) &\longrightarrow \mathbb{R}, \\
	\nu &\mapsto \int_{\mathbb{R}^d \times \mathcal{B}} G(x, \omega^-) \, \nu(\mathrm{d}(x, \omega^-)),
\end{align*}
for all \( G \in \mathrm{BL}_{\mathcal{B}}(\mathbb{R}^d) \).
Let \( \nu \) be a limit point of \( \{ \nu_{t_n, x_0} \} \). From \eqref{OLAS5sdwa}, it follows that \( \nu \) is invariant under the transition semigroup \( \{ \mathcal{Q}_t \}_{t > 0} \). Therefore, by the uniqueness of the invariant measure of the SDS $\mu$, we conclude that \( \nu = \mu \).
Since \( \mu \) is the unique limit point of the family \( \{ \nu_{t_n, x_0} \} \) with respect to the narrow topology, we deduce that for a subsequence \( \{ t_{n_k} \}_{k \geq 0} \subseteq \{ t_n \}_{n \geq 0} \),
\begin{align*}
	\lim_{t_{n_k} \to \infty}\int_{\R^d\times \mathcal{B}}\tilde{G}(x,\omega^-)\nu_{t_{n_k},x_0}\left(\mathrm{d}(x,\omega^-)\right) &=	\lim_{t_{n_k} \to \infty} \frac{1}{t_{n_k}} \int_{0}^{t_{n_k}} \int_{\mathcal{B}} \tilde{G}(\varphi^{s}_{\omega^-}(x_0), \omega^-) \, \mathbf{P}(\mathrm{d}\omega^-) \, \mathrm{d}s 
	\\&= \int_{\mathbb{R}^d \times \mathcal{B}} \tilde{G}(x, \omega^-) \, \mu(\mathrm{d}(x, \omega^-)).
\end{align*}
This contradicts \eqref{AOOS}.
\qed
\begin{remark}\label{RRFFDASd}
	Note that time averaging is a standard technique used to ensure that the family of tight measures \( \nu_{t_n,x_0} \) converges to \( \mu \). Although the family of measures
	\[
	\int_{\mathcal{B}} \mathcal{Q}_{s}\left((x_0,\omega^-); \Gamma \right) \, \mathbf{P}(\mathrm{d}\omega^-), \quad s \geq 0,
	\]
	is tight, it is not evident that \( \mu \) arises as a limit point of this family.
	For a strongly mixing, white-noise random dynamical system this holds true, see \cite{KS04} for details.
	We expect that an analogous result should be valid in our setting as well, although the proof seems challenging in the absence of the Markov property.
\end{remark}
\begin{proposition}\label{YASHAsd}
	Let \( \mathcal{A} \) be a \( \mathrm{P} \)-weak point attractor. Then
	\begin{align}\label{AUSJl63a}
		\mu_{\omega^-}\big({\mathcal{A}}(\omega^-)\big) = 1  \quad \mathbf{P}\text{-a.s.},
	\end{align}
	which in particular implies that
	\begin{align}\label{asdf32as}
		\mathrm{supp}(\mu_{\omega^-})\subseteq {\mathcal{A}}(\omega^-)  \quad \mathbf{P}\text{-a.s.}
	\end{align}
\end{proposition}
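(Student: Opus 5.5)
The idea is to test the measure $\mu$ against the random Lipschitz function $G(x,\omega^-):=\min\{1,d(x,\mathcal{A}(\omega^-))\}$. This function lies in $\mathrm{BL}_{\mathcal{B}}(\mathbb{R}^d)$, since it is bounded by $1$ and $1$-Lipschitz in $x$. It is also measurable in $\omega^-$, because $\mathcal{A}$ is a random closed set. We then compare the two sides of Lemma~\ref{YAs85sad}.

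Fix $x_0\in\mathbb{R}^d$. The $\mathrm{P}$-weak point attraction of $\mathcal{A}$ gives $d(\varphi^s_{\omega^-}(x_0),\mathcal{A}(\omega^-))\to 0$ in $\mathbf{P}$-probability as $s\to\infty$. Since $G\le 1$, dominated convergence in the form of convergence in probability of bounded variables yields
\[
\int_{\mathcal{B}} G(\varphi^{s}_{\omega^-}(x_0),\omega^-)\,\mathbf{P}(\mathrm{d}\omega^-)\longrightarrow 0,\qquad s\to\infty.
\]
Hence the Cesàro means $\frac1t\int_0^t(\cdots)\,\mathrm{d}s$ also tend to $0$. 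By Lemma~\ref{YAs85sad}, the left-hand side converges to $\int G\,\mathrm{d}\mu$. Using the disintegration from Lemma~\ref{RAND_MEASURE}, this gives
\[
0=\int_{\mathcal{B}}\int_{\mathbb{R}^d}\min\{1,d(x,\mathcal{A}(\omega^-))\}\,\mu_{\omega^-}(\mathrm{d}x)\,\mathbf{P}(\mathrm{d}\omega^-).
\]
The integrand is nonnegative, so for $\mathbf{P}$-a.e.\ $\omega^-$ we have $d(x,\mathcal{A}(\omega^-))=0$ for $\mu_{\omega^-}$-a.e.\ $x$. Because $\mathcal{A}(\omega^-)$ is closed (indeed compact), this means $x\in\mathcal{A}(\omega^-)$, and therefore $\mu_{\omega^-}(\mathcal{A}(\omega^-))=1$, which is \eqref{AUSJl63a}.

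The support statement \eqref{asdf32as} follows immediately. The support is the smallest closed set of full measure, and $\mathcal{A}(\omega^-)$ is a closed set of full $\mu_{\omega^-}$-measure.

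The main technical point is checking that $G$ is admissible, i.e.\ jointly measurable in the sense required by Lemma~\ref{YAs85sad}. The measurability of $(x,\omega^-)\mapsto d(x,\mathcal{A}(\omega^-))$ comes from measurability in $\omega^-$ for each fixed $x$ together with continuity in $x$ (Carathéodory). A second point to verify is that the $\mathrm{P}$-weak point attraction is formulated for $\varphi^t_{\omega^-}(x_0)$ with the same fibre $\mathcal{A}(\omega^-)$, which is exactly Definition~\ref{attractor} with convergence in probability.
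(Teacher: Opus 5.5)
Your proof is correct and follows essentially the same route as the paper: you test $\mu$ against a bounded Lipschitz random function built from $d(x,\mathcal{A}(\omega^-))$, combine $\mathrm{P}$-weak point attraction with Lemma~\ref{YAs85sad} and the disintegration, and conclude via closedness of $\mathcal{A}(\omega^-)$. The only difference is cosmetic. The paper uses the complementary function $1-\min\{d/\epsilon,1\}$, sandwiches it between indicators of the neighbourhoods $\mathcal{N}(\omega^-,\epsilon\delta)$ and $\mathcal{N}(\omega^-,2\epsilon)$, and then intersects over $\epsilon=1/n$; your single test function $\min\{1,d\}$, whose integral against $\mu$ vanishes, reaches the same conclusion in one step.
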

\proof
For an arbitrary $\epsilon > 0$ we define
\begin{align}\label{GG_EE}
	\begin{split}
		G_{\epsilon} \colon \R^d \times \tilde{\mathcal{B}} &\longrightarrow [0,1],\\
		(x, \omega^-) &\longrightarrow 1 - \min\left\lbrace \frac{d(x, \mathcal{A}(\omega^-))}{\epsilon}, 1 \right\rbrace.
	\end{split}
\end{align}
Here we recall that $\tilde{\mathcal{B}}$ is the measurable subset of $\mathcal{B}$ obtained in Lemma~\ref{RAND_MEASURE}. It is straightforward to verify that \( G_{\epsilon} \in \mathrm{BL}_{\mathcal{B}}(\mathbb{R}^d) \). Hence, by Lemma~\ref{YAs85sad}, we obtain
\begin{align}\label{Ola63sfsa}
	\lim_{t \rightarrow \infty} \frac{1}{t} \int_{0}^{t} \int_{\mathcal{B}} G_{\epsilon}(\varphi^{s}_{\omega^-}(x_0), \omega^-) \, \mathbf{P}(\mathrm{d} \omega^-) \, \mathrm{d}s 
	= \int_{\mathbb{R}^d \times \mathcal{B}} G_{\epsilon}(x, \omega^-) \, \mu\left( \mathrm{d}(x, \omega^-) \right).
\end{align}
Let \( 0 < \delta < 1 \) be arbitrarily small. Then it is clear that
\begin{align}\label{OALssdvz}
	(1 - \delta)\chi_{\mathcal{N}(\omega^-, \epsilon \delta)}(x) \leq G_{\epsilon}(x, \omega^-) \leq \chi_{\mathcal{N}(\omega^-, 2\epsilon)}(x),
\end{align}
where \( \mathcal{N}(\omega^-, \alpha) = \left\lbrace x \in \mathbb{R}^d : d(x, \mathcal{A}(\omega^-)) < \alpha \right\rbrace \) for \( \alpha > 0 \).  
Therefore, combining \eqref{Ola63sfsa} and \eqref{OALssdvz} and using that $\mathcal{A}$ is a $\mathrm{P}$-weak point attractor, we obtain for $x_0 \in \mathbb{R}^d$ that 
\begin{align}\label{KJ145}
	\begin{split}   
		(1 - \delta) 
		&= (1 - \delta) \lim_{t \rightarrow \infty} \frac{1}{t} \int_{0}^{t} \left(1 - \mathbf{P}\left(d\left(\varphi^{s}_{\omega^-}(x_0), \mathcal{A}(\omega^-)\right) \geq \epsilon \delta\right)\right) \mathrm{d}s \\
		&\leq \int_{\tilde{\mathcal{B}}} \mu_{\omega^-} \left( \mathcal{N}(\omega^-, 2\epsilon) \right) \, \mathbf{P}(\mathrm{d} \omega^-) \leq 1.
	\end{split}
\end{align}
Thus, since \( 0 < \delta < 1 \) can be chosen arbitrarily small, we obtain
\[
\int_{\tilde{\mathcal{B}}} \mu_{\omega^-} \left( \mathcal{N}(\omega^-, 2\epsilon) \right) \, \mathbf{P}(\mathrm{d} \omega^-) = 1.
\]
Obviously \( \mu_{\omega^-} \left( \mathcal{N}(\omega^-, 2\epsilon) \right) \leq 1 \) and $\mathbf{P}(\tilde{B})=1$. Hence 
\begin{align}\label{HAKsqq}
	\mu_{\omega^-} \left( \mathcal{N}(\omega^-, 2\epsilon) \right) = 1, \quad \mathbf{P}\text{-a.s.}
\end{align}
Since \( \epsilon > 0 \) is arbitrary and using the fact that \( \mathcal{A}(\omega^-) \) is closed, it follows that
\begin{align}\label{asdadcswf}
	\mathcal{A}(\omega^-) = \bigcap_{n \geq 1} \mathcal{N} \left( \omega^-, \tfrac{1}{n} \right).
\end{align}
This together with \eqref{HAKsqq} implies \eqref{AUSJl63a}. 
\qed

\begin{remark} 
	From the definition, the $\mathrm{P}$-pullback attractor is also a $\mathrm{P}$-weak point attractor. Therefore, the claim of Proposition \ref{JHJHJH} is in particular valid for the $\mathrm{P}$-pullback attractor.~However, since we plan to show a weak type of synchronization, we further focus on weak point attractors.
\end{remark}

Let us continue with the following lemma.
\begin{lemma}\label{ILAS9sdzw}
	Let $\epsilon > 0$ and define
	\[
	G_{\epsilon} : \mathbb{R}^d \times \mathcal{B} \longrightarrow [0,1]
	\]
	by
	\[
	G_\epsilon(x, \omega^-): =
	\begin{cases}
		\min\left\{ \dfrac{d\left(x,\mathrm{supp}(\mu_{\omega^-})\right)}{\epsilon}, 1 \right\}, & \text{if } (x, \omega^-) \in \mathbb{R}^d \times \tilde{\mathcal{B}}, \\
		0, & \text{otherwise}.
	\end{cases}
	\]
	Then \( G_\epsilon \in \mathrm{BL}_{\mathcal{B}}(\mathbb{R}^d) \) and is jointly measurable.
\end{lemma}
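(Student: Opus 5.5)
The plan is to check the three defining properties of $\mathrm{BL}_{\mathcal{B}}(\mathbb{R}^d)$ one at a time, and then to prove joint measurability separately. The natural route to measurability is to write $G_\epsilon$ as a composition of the map $(x,\omega^-)\mapsto d(x,\mathrm{supp}(\mu_{\omega^-}))$ with the continuous function $r\mapsto\min\{r/\epsilon,1\}$, extended by $0$ off $\mathbb{R}^d\times\tilde{\mathcal{B}}$. Since $\tilde{\mathcal{B}}$ is measurable, everything reduces to the distance function on $\mathbb{R}^d\times\tilde{\mathcal{B}}$.

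\textbf{Bounds and continuity in $x$.} For fixed $\omega^-\in\tilde{\mathcal{B}}$, the support $\mathrm{supp}(\mu_{\omega^-})$ is a nonempty closed set, because $\mu_{\omega^-}$ is a probability measure by Lemma~\ref{RAND_MEASURE}. Hence $x\mapsto d(x,\mathrm{supp}(\mu_{\omega^-}))$ is $1$-Lipschitz, and composing with $r\mapsto\min\{r/\epsilon,1\}$ makes $G_\epsilon(\cdot,\omega^-)$ Lipschitz with constant $1/\epsilon$ and bounded by $1$. For $\omega^-\notin\tilde{\mathcal{B}}$ the function is identically $0$. This gives $\|G_\epsilon(\cdot,\omega^-)\|_{\mathrm{BL}}\le 1+1/\epsilon$ for every $\omega^-$, so $M=2+1/\epsilon$ works, and it also gives continuity in $x$.

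\textbf{Measurability in $\omega^-$.} This is the main step. I would choose a countable dense set $\{q_k\}\subset\mathbb{R}^d$ and, for rationals $r>0$, use
\[
d(x,\mathrm{supp}(\mu_{\omega^-}))<r \iff \exists k:\ |x-q_k|<r \text{ and } \mu_{\omega^-}(B(q_k,r-|x-q_k|))>0 .
\]
To make the sets countable, I would replace the ball radii by rationals $s<r-|x-q_k|$, which is possible by openness. The set $\{(x,\omega^-): |x-q_k|<r-s,\ \mu_{\omega^-}(B(q_k,s))>0\}$ is a product of a Borel set in $x$ with the set $\{\omega^-:\mu_{\omega^-}(B(q_k,s))>0\}$. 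The latter is measurable by property 2 of Lemma~\ref{RAND_MEASURE}. Taking countable unions gives joint measurability of $\{(x,\omega^-): d(x,\mathrm{supp}(\mu_{\omega^-}))<r\}$ for each rational $r$, and hence of the distance function itself. Measurability in $\omega^-$ for fixed $x$ follows as a special case.

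The justification of the displayed equivalence uses the characterization $y\in\mathrm{supp}(\mu)$ iff every ball around $y$ has positive mass. If $d(x,\mathrm{supp})<r$, pick $y$ in the support with $|x-y|<r$, and then pick $q_k$ close to $y$ with a small rational $s$ so that $y\in B(q_k,s)$ and $|x-q_k|+s<r$; then $B(q_k,s)$ has positive mass. Conversely, positive mass of $B(q_k,s)$ yields a support point $y$ in that ball, and then $|x-y|<|x-q_k|+s<r$.

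Since these steps are largely routine, the only genuine subtlety I expect is this joint measurability step. The Lipschitz bound is immediate, and joint measurability also follows by the standard fact that a Carathéodory function (continuous in $x$, measurable in $\omega^-$) is jointly measurable.
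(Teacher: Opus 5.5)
Your proof is correct and follows essentially the same route as the paper. Both use the $1/\epsilon$-Lipschitz bound in $x$, and both get measurability from the fact that a ball meets $\mathrm{supp}(\mu_{\omega^-})$ iff it has positive $\mu_{\omega^-}$-mass, which the paper applies in the simpler fixed-$x$ form $\{\omega^-\in\tilde{\mathcal{B}} : d(x,\mathrm{supp}(\mu_{\omega^-}))<r\}=\{\omega^-\in\tilde{\mathcal{B}} : \mu_{\omega^-}(B(x,r))>0\}$. The only difference is that you obtain joint measurability directly by writing $\{d<r\}$ as a countable union of measurable rectangles (dense set plus rational radii), whereas the paper deduces it from the Carath\'eodory-type result \cite[Lemma 1.1]{CH02}, which is exactly the alternative you mention at the end.
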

\proof
First, it is not hard to see that for every $x, y \in \mathbb{R}^d$, we have
\[
|G_\epsilon(x, \omega^-) - G_\epsilon(y, \omega^-)| \leq \frac{\lvert x - y \rvert}{\epsilon}.
\]
To prove the measurability of $G_\epsilon$ with respect to $\omega^-$, it suffices to show that for every \( r > 0 \),
\[
\left\{ \omega^- \in \tilde{\mathcal{B}} : d\left(x, \mathrm{supp}(\mu_{\omega^-})\right) < r \right\} \in \sigma(\mathcal{B}).
\]
To see this, note that by the definition of the support of a measure and Lemma~\ref{RAND_MEASURE}, we have
\[
\left\{ \omega^- \in \tilde{\mathcal{B}} : d\left(x,\mathrm{supp}(\mu_{\omega^-})\right) < r \right\} = \left\{ \omega^- \in \tilde{\mathcal{B}} : \mu_{\omega^-}(B(x, r)) > 0 \right\} \in \sigma(\mathcal{B}).
\]
Therefore, \( G_\epsilon \in \mathrm{BL}_{\mathcal{B}}(\mathbb{R}^d) \). The joint measurability of $G_\epsilon$ follows from \cite[Lemma 1.1]{CH02}.
\qed
\begin{remark}\label{UJASS} 
	From Proposition~\ref{YASHAsd}, for every \(\mathrm{P}\)-weak point attractor \(\mathcal{A}\), it follows that
	\[
	\mathrm{supp}(\mu_{\omega^-}) \subseteq \mathcal{A}(\omega^-).
	\]
	A natural question is whether the family 
	\(\{ \mathrm{supp}(\mu_{\omega^-}) \}_{\omega^- \in \tilde{\mathcal{B}}}\) 
	itself constitutes a \(\mathrm{P}\)-weak point attractor. 
	According to Remark~\ref{ARATSasd}, once we have a strongly mixing white-noise RDS, this property holds.
\end{remark}
The following proposition provides a partial answer to this question.
\begin{proposition}\label{POINT}
	Let \( \overline{\mathcal{A}} = \left\{ \overline{\mathcal{A}}(\omega^-) \right\}_{\omega^- \in \mathcal{B}} \) be defined by
	\[
	\overline{\mathcal{A}}(\omega^-) :=
	\begin{cases}
		\mathrm{supp}(\mu_{\omega^-}) & \text{if } \omega^- \in \tilde{\mathcal{B}}, \\
		\{0\} & \text{otherwise}.
	\end{cases}	
	\]
	Then, for every $\epsilon>0$ and $x_{0}\in\R^d$
	\begin{align}\label{ASOASAsw}
		\lim_{t\rightarrow \infty}\frac{1}{t}\int_{0}^{t}\mathbf{P}\left(d\left(\varphi^{s}_{\omega^-}(x_0),\overline{\mathcal{A}}(\omega^-)\right)\geq\epsilon\right)\mathrm{d}s=0.
	\end{align}
\end{proposition}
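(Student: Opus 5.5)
The natural route is to apply Lemma~\ref{YAs85sad} to the test function $G_\epsilon$ from Lemma~\ref{ILAS9sdzw}. This function measures the normalized distance to $\mathrm{supp}(\mu_{\omega^-})$, truncated at $1$, and is set to $0$ off $\tilde{\mathcal{B}}$. By Lemma~\ref{ILAS9sdzw}, $G_\epsilon\in\mathrm{BL}_{\mathcal{B}}(\mathbb{R}^d)$ and is jointly measurable, so Lemma~\ref{YAs85sad} gives for every $x_0\in\mathbb{R}^d$
\[
\lim_{t\to\infty}\frac1t\int_0^t\int_{\mathcal{B}}G_\epsilon(\varphi^s_{\omega^-}(x_0),\omega^-)\,\mathbf{P}(\mathrm{d}\omega^-)\,\mathrm{d}s=\int_{\mathbb{R}^d\times\mathcal{B}}G_\epsilon(x,\omega^-)\,\mu(\mathrm{d}(x,\omega^-)).
\]
The right-hand side vanishes. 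Indeed, by the disintegration in Lemma~\ref{RAND_MEASURE} it equals $\int_{\tilde{\mathcal{B}}}\int_{\mathbb{R}^d}G_\epsilon(x,\omega^-)\,\mu_{\omega^-}(\mathrm{d}x)\,\mathbf{P}(\mathrm{d}\omega^-)$. Here we use $\mathbf{P}(\tilde{\mathcal{B}})=1$, and we extend the disintegration formula from rectangles to nonnegative jointly measurable functions by a monotone class argument. Since $\mathbb{R}^d$ is separable, $\mu_{\omega^-}(\mathrm{supp}(\mu_{\omega^-}))=1$, and $G_\epsilon(\cdot,\omega^-)$ vanishes on $\mathrm{supp}(\mu_{\omega^-})$. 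Hence the inner integral is $0$ for every $\omega^-\in\tilde{\mathcal{B}}$. Proposition~\ref{descrte} is not needed here, although under its hypotheses the support is simply the finite set $\{b_i(\omega^-)\}$.

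To pass from the averaged integral to the averaged probability, fix $\omega^-\in\tilde{\mathcal{B}}$ with $d(\varphi^s_{\omega^-}(x_0),\overline{\mathcal{A}}(\omega^-))\ge\epsilon$. Then $G_\epsilon(\varphi^s_{\omega^-}(x_0),\omega^-)=1$. This gives the pointwise bound
\[
\chi_{\{d(\varphi^s_{\omega^-}(x_0),\overline{\mathcal{A}}(\omega^-))\ge\epsilon\}}\,\chi_{\tilde{\mathcal{B}}}(\omega^-)\le G_\epsilon(\varphi^s_{\omega^-}(x_0),\omega^-).
\]
Integrating over $\mathbf{P}$ and using $\mathbf{P}(\mathcal{B}\setminus\tilde{\mathcal{B}})=0$, the probability in \eqref{ASOASAsw} is bounded by $\int_{\mathcal{B}}G_\epsilon(\varphi^s_{\omega^-}(x_0),\omega^-)\,\mathbf{P}(\mathrm{d}\omega^-)$ for every $s$. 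Time-averaging and letting $t\to\infty$ then yields \eqref{ASOASAsw}, since the averaged probabilities are nonnegative.

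The main point of care is measurability. We need $\omega^-\mapsto d(\varphi^s_{\omega^-}(x_0),\overline{\mathcal{A}}(\omega^-))$ to be measurable, so that the probability in \eqref{ASOASAsw} makes sense, and we need $(s,\omega^-)\mapsto G_\epsilon(\varphi^s_{\omega^-}(x_0),\omega^-)$ to be jointly measurable, so that Fubini applies in the time average. Both follow from the joint measurability in Lemma~\ref{ILAS9sdzw} (via \cite[Lemma 1.1]{CH02}) composed with the continuity of the SDS in $(s,\omega^-)$. On $\tilde{\mathcal{B}}$ the distance to $\overline{\mathcal{A}}$ coincides with $\epsilon\,G_\epsilon$ wherever the latter is less than $1$, so its measurability follows as well.
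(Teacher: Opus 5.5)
Your proposal is correct and follows the paper's own proof essentially step for step. You apply Lemma~\ref{YAs85sad} to the test function $G_\epsilon$ of Lemma~\ref{ILAS9sdzw} and note that $\int G_\epsilon\,\mathrm{d}\mu=0$ because $G_\epsilon(\cdot,\omega^-)$ vanishes on $\mathrm{supp}(\mu_{\omega^-})$; you then bound the probability pointwise by $G_\epsilon$ on $\tilde{\mathcal{B}}$. Your extra remarks on the disintegration of general integrands and on measurability fill in details the paper leaves implicit, with one small caveat: joint measurability in $(s,\omega^-)$ is more cleanly obtained from $\varphi^s_{\omega^-}(x_0)=\Phi^s_{\theta_{-s}\omega}(x_0)$ together with the measurability of $\Phi$ and $\theta$, since Definition~\ref{SDSSS} does not literally give continuity of the SDS in $(s,\omega^-)$.
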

\proof
Let $G_\epsilon$ be as in Lemma \ref{ILAS9sdzw}. From Lemma~\ref{YAs85sad}, we have
\begin{align}
	\begin{split}
		&\lim_{t\rightarrow\infty}\frac{1}{t}\int_{0}^{t}\int_{\mathcal{B}}G_{\epsilon}(\varphi^{s}_{\omega^-}(x_0),\omega^-)\ \mathbf{P}(\mathrm{d\omega^-})\mathrm{d}s =\int_{\R^d\times \mathcal{B}}G_{\epsilon}(x,\omega^-)\mu\left(\mathrm{d}(x,\omega^-)\right)\\
		&=\int_{\tilde{\mathcal{B}}}\int_{\mathrm{supp}(\mu_{\omega^-})}G_{\epsilon}(x,\omega^-)\mu_{\omega^-}(\mathrm{d}x)\mathbf{P}(\mathrm{d}\omega^-)=0.
	\end{split}
\end{align}
Thus, from the definition of $G_\epsilon$ 
\begin{align*}
	&\lim_{t\rightarrow \infty}\frac{1}{t}\int_{0}^{t}\mathbf{P}\left(d\left(\varphi^{s}_{\omega^-}(x_0),\overline{\mathcal{A}}(\omega^-)\right)\geq\epsilon\right)\mathrm{d}s\\&\quad \leq \lim\limits_{t\to\infty} \frac{1}{t}\int_{0}^{t}\int_{\mathcal{B}}G_{\epsilon}(\varphi^{s}_{\omega^-}(x_0),\omega^-)\ \mathbf{P}(\mathrm{d\omega^-})\mathrm{d}s=0.
\end{align*}
This concludes the proof.
\qed
\begin{remark}\label{AASASA}
	Unfortunately, Proposition~\ref{POINT} does not allow us to conclude that \(\overline{\mathcal{A}}\) is a \(\mathrm{P}\)-weak point attractor. Nevertheless, it is evident that every \(\mathrm{P}\)-weak point attractor satisfies the condition \eqref{ASOASAsw}. This observation motivates the conjecture that \(\overline{\mathcal{A}}\) is a \(\mathrm{P}\)-weak point attractor and hence minimal.
\end{remark}
Let us now examine how Proposition~\ref{POINT} provides a partial answer to the fact that $\overline{\mathcal{A}}$ is the minimal \( \mathrm{P} \)-weak point attractor.
\begin{proposition}\label{AUIASLc}
	Consider the setting of Proposition~\ref{POINT}. Let \( (x_i)_{i \geq 1} \) be an arbitrary sequence in \( \mathbb{R}^d \). Then there exists an increasing sequence \( (s_n)_{n \geq 1} \) with \( s_n \to \infty \) as \( n \to \infty \), such that
	\begin{align*}
		\lim_{s_n \rightarrow \infty} \mathbf{P}\left(d\left(\varphi^{s_n}_{\omega^-}(x_i), \overline{\mathcal{A}}(\omega^-)\right) \geq t \right) = 0,
	\end{align*}
	for all \( i \in \mathbb{N} \) and \( t > 0 \).
\end{proposition}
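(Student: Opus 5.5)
The plan is to upgrade the Cesàro convergence of Proposition~\ref{POINT} to convergence along a single subsequence that works simultaneously for all starting points $x_i$ and all thresholds $t>0$. For fixed $i$ and $\epsilon>0$ set
\[
f_{i,\epsilon}(s):=\mathbf{P}\left(d\left(\varphi^{s}_{\omega^-}(x_i),\overline{\mathcal{A}}(\omega^-)\right)\geq\epsilon\right)\in[0,1].
\]
By Proposition~\ref{POINT}, $\frac1T\int_0^T f_{i,\epsilon}(s)\,\mathrm{d}s\to 0$ for every $i$ and $\epsilon$.

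The first step reduces to countably many functions. Since $f_{i,t}\le f_{i,1/m}$ whenever $1/m\le t$, it suffices to treat $\epsilon\in\{1/m: m\in\mathbb{N}\}$. Enumerate the pairs $(i,m)$ and form the weighted combination
\[
g(s):=\sum_{i,m\geq1}2^{-i-m}f_{i,1/m}(s)\in[0,1].
\]
By dominated convergence for the series, justified by uniform boundedness and summability of the weights, $\frac1T\int_0^T g(s)\,\mathrm{d}s\to0$. Measurability of $s\mapsto f_{i,\epsilon}(s)$ follows from joint measurability of $(s,\omega^-)\mapsto d(\varphi^s_{\omega^-}(x_i),\overline{\mathcal{A}}(\omega^-))$. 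This uses the continuity of the SDS in time and the measurability of $\omega^-\mapsto d(x,\mathrm{supp}(\mu_{\omega^-}))$ established in the proof of Lemma~\ref{ILAS9sdzw}, extended by $\{0\}$ off $\tilde{\mathcal B}$.

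The second step extracts the subsequence. A nonnegative function with vanishing Cesàro means must be small on most of $[0,T]$. By Markov's inequality for Lebesgue measure, $\mathrm{Leb}\{s\in[T/2,T]: g(s)\geq\eta\}\le \eta^{-1}\int_0^T g\,\mathrm{d}s=o(T)$ for each fixed $\eta>0$. Hence, choosing $\eta_n=1/n$ and $T_n$ large enough that $\frac1{T_n}\int_0^{T_n}g<\frac1{4n}$, the set $[T_n/2,T_n]\cap\{g<1/n\}$ has positive measure and is nonempty. Pick $s_n$ from it, taking $T_n$ increasing with $T_{n}/2>s_{n-1}$. This gives $s_n$ increasing to infinity with $g(s_n)<1/n\to0$.

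Finally, $f_{i,1/m}(s_n)\le 2^{i+m}g(s_n)\to0$ for each $i,m$, and by monotonicity in the threshold the conclusion holds for every $t>0$. The only step needing care is the measurability of $f_{i,\epsilon}$ and the reduction to countably many thresholds. I expect the measurability to be the main (mild) obstacle. If joint measurability in $s$ is delicate, an alternative is to replace $\mathbf{P}(\cdot\ge\epsilon)$ by the Lipschitz surrogate $\int G_\epsilon(\varphi^s_{\omega^-}(x_i),\omega^-)\,\mathbf{P}(\mathrm{d}\omega^-)$ from Lemma~\ref{ILAS9sdzw}. That surrogate is continuous in $s$ by continuity of the flow and dominated convergence, dominates the probability, and has vanishing Cesàro means by the proof of Proposition~\ref{POINT}. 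Running the above argument with it gives the claim.
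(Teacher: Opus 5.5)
Your proposal is correct and follows essentially the paper's route: reduce to the thresholds $1/m$, use the vanishing Ces\`aro means from Proposition~\ref{POINT}, and extract one common subsequence. The only difference is that your weighted series $g$ replaces the paper's finite sums $\mathcal{R}_m$ followed by a diagonal argument, and your Markov-inequality extraction and measurability remarks supply details the paper leaves implicit.
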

\proof
First, note that Proposition~\ref{POINT} implies that for 
\[
\mathcal{R}_{m}(s,\omega^-) := \sum_{\substack{1 \leq j,k \leq m }} \mathbf{P}\left( d\big(\varphi^{s}_{\omega^-}(x_{k}), \overline{\mathcal{A}}(\omega^-)\big) \geq \frac{1}{j} \right)
\]
we have
\[
\lim_{t\rightarrow\infty}\frac{1}{t}\int_{0}^{t}\mathcal{R}_{m}(s,\omega^-)\,\mathrm{d}s = 0.
\]
Therefore, we can find a sequence \( (s_n^{m})_{n \geq 1} \) such that
\[
\lim_{s_n^{m} \rightarrow \infty} \mathcal{R}_{m}(s_n^m,\omega^-) = 0.
\]
Now, applying a standard diagonal argument to the sequence of increasing functions \( (\mathcal{R}_{m})_{m \geq 1} \) completes the proof.
\qed
We further investigate the structure of $\overline{\cA}$. 
\begin{lemma}\label{JHJHJH}
	Consider the setting of Proposition~\ref{POINT}. Then there exists a deterministic constant \( \overline{C} > 0 \) such that
	\begin{align*}
		\int_{\tilde{\mathcal{B}}} \left(\mu_{\omega^-} \times \mu_{\omega^-}\right)(\Gamma_{\overline{C}}) \, \mathbf{P}(\mathrm{d}\omega^-)=0,
	\end{align*}
	where
	\[
	\Gamma_{\overline{C}} := \left\{ (x, y) \in \mathbb{R}^d \times \mathbb{R}^d : |x - y| > \overline{C} \right\}.
	\]
\end{lemma}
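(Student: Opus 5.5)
We want to show that the atoms of $\mu_{\omega^-}$ are uniformly close: there is a deterministic $\overline{C}$ such that, for $\mathbf{P}$-a.e.\ $\omega^-$, no two atoms of $\mu_{\omega^-}$ are at distance larger than $\overline{C}$.

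By Proposition~\ref{descrte}, on a full-measure set $\mu_{\omega^-}=\frac1p\sum_{i=1}^p\delta_{b_i(\omega^-)}$. It therefore suffices to show that $|b_i(\omega^-)-b_j(\omega^-)|\le \overline{C}$ for all $i,j$, $\mathbf{P}$-a.s. The key input is the additive structure of the noise. For two solutions driven by the same $\omega$, the difference $\Delta_t:=\Phi^t_\omega(x)-\Phi^t_\omega(y)$ solves the random ODE $\dot\Delta_t=F(\Phi^t_\omega(x))-F(\Phi^t_\omega(y))$, in which the noise cancels. Assumption~\ref{Drift}~2) then gives
\[
\frac{\mathrm d}{\mathrm dt}|\Delta_t|^2\le 2C_1^F-2C_2^F|\Delta_t|^2 .
\]
By Gr\"onwall,
\[
|\Delta_t|^2\le e^{-2C_2^Ft}|x-y|^2+\frac{C_1^F}{C_2^F},
\]
uniformly in $\omega$. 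Moreover, whenever $|\Delta_t|^2>C_1^F/C_2^F$, the quantity $|\Delta_t|$ is strictly decreasing.

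Next we use invariance to transport the atoms. By Lemma~\ref{RAND_MEASURE} and injectivity of $\Phi^t_\omega$, the atoms of $\mu_{P_t(\omega^-,\omega^+)}$ are exactly $\Phi^t_\omega(b_i(\omega^-))$. Set
\[
D(\omega^-):=\max_{i,j}|b_i(\omega^-)-b_j(\omega^-)|,
\]
which is measurable, being obtained from the support as in Lemma~\ref{ILAS9sdzw}. Then
\[
D(P_t(\omega^-,\omega^+))^2\le e^{-2C_2^Ft}D(\omega^-)^2+\frac{C_1^F}{C_2^F},
\]
where the maximum over pairs is handled pair by pair. Since $\theta_t$ preserves $\mathbb P$, the law of $D\circ\Pi_1\circ\theta_t$ equals the law of $D$ under $\mathbf P$.

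Fix $\overline{C}^2>C_1^F/C_2^F$ and choose $\kappa$ with $C_1^F/C_2^F<\kappa<\overline{C}^2$. We argue that $\mathbf P(D^2>\overline{C}^2)=0$ by contradiction. Suppose $\mathbf P(D^2>\kappa)>0$. Consider the function $g(s)=\mathbf P(D^2>s)$, which is nonincreasing. Stationarity and the pathwise bound give
\[
g(s)=\mathbb P\bigl(D(P_t)^2>s\bigr)\le \mathbf P\Bigl(e^{-2C_2^Ft}D^2>s-\tfrac{C_1^F}{C_2^F}\Bigr),
\]
that is, $g(s)\le g\bigl(e^{2C_2^Ft}(s-C_1^F/C_2^F)\bigr)$. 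Letting $t\to\infty$ with $s>C_1^F/C_2^F$ fixed, the argument tends to $+\infty$, so the right-hand side tends to $\mathbf P(D=\infty)=0$, because $D<\infty$ a.s. Hence $g(s)=0$ for every $s>C_1^F/C_2^F$. Any $\overline{C}>\sqrt{C_1^F/C_2^F}$ therefore works. This yields $(\mu_{\omega^-}\times\mu_{\omega^-})(\Gamma_{\overline C})=0$ for $\mathbf P$-a.e.\ $\omega^-\in\tilde{\mathcal B}$, and integrating gives the claim.

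The main obstacle I expect is the measure-theoretic bookkeeping in this final step. One must make sure that the pushforward identity holds for all $t$ on the invariant full-measure set $\tilde{\mathcal B}\times\mathcal B$ (Lemma~\ref{RAND_MEASURE}, item 5). One must also check that the law of $D(P_t(\omega^-,\omega^+))$ under $\mathbb P$ is the law of $D$ under $\mathbf P$, which follows from $(\theta_t)_\ast\mathbb P=\mathbb P$ together with the first component of $\theta_t$ being $P_t$. Everything else is the elementary Gr\"onwall contraction supplied by the dissipativity in Assumption~\ref{Drift}.
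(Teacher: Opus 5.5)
Your argument is correct once $\mu_{\omega^-}$ is known to be atomic. As a proof of the lemma as stated, however, it has a gap. The lemma is posed in the setting of Proposition~\ref{POINT}, which assumes neither $\lambda_1<0$ nor the H\"older condition \eqref{DEEE}. So Proposition~\ref{descrte} is not available; note that Corollary~\ref{ATRTASs} adds these hypotheses separately, ``together with Proposition~\ref{POINT}''.

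The atomicity is not cosmetic in your proof. It is exactly what gives $D(\omega^-)<\infty$ a.s., which you need in the last step $g(u)\to\mathbf P(D=\infty)=0$. Without atoms, the natural replacement $D(\omega^-)=\operatorname{diam}\operatorname{supp}(\mu_{\omega^-})$ may a priori equal $+\infty$ on a set of positive measure. In that case the pairwise Gr\"onwall bound gives no information on $D(P_t(\omega^-,\omega^+))$, and your tail inequality only yields $g(s)\le\mathbf P(D=\infty)$. Since a.s.\ boundedness of the support is essentially the content of the lemma, assuming it would be circular.

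The repair is to run your tail argument on $|x-y|$ under the product measure, instead of on a supremum over the support. Set $h(s):=\int_{\tilde{\mathcal B}}(\mu_{\omega^-}\times\mu_{\omega^-})(\{|x-y|^2>s\})\,\mathbf P(\mathrm d\omega^-)$. The invariance $(\Phi^t_\omega)_\ast\mu_{\omega^-}=\mu_{P_t(\omega^-,\omega^+)}$ on $\tilde{\mathcal B}\times\mathcal B$, together with $(P_t)_\ast(\mathbf P\times\mathbf P)=\mathbf P$, gives
\begin{align*}
h(s)&=\int_{\mathcal B}\int_{\tilde{\mathcal B}}(\mu_{\omega^-}\times\mu_{\omega^-})\bigl(\{(x,y):|\Phi^t_\omega(x)-\Phi^t_\omega(y)|^2>s\}\bigr)\,\mathbf P(\mathrm d\omega^-)\,\mathbf P(\mathrm d\omega^+)\\
&\le h\bigl(e^{2C_2^Ft}(s-C_1^F/C_2^F)\bigr).
\end{align*}
For $s>C_1^F/C_2^F$ the right-hand side tends to $0$ as $t\to\infty$, simply because $|x-y|<\infty$ for every pair of points.

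This is essentially the paper's proof. It writes the same invariance identity for $\chi_{\Gamma_{\overline C}}$ and concludes by dominated convergence from the pointwise bound $|\Phi^t_\omega(x)-\Phi^t_\omega(y)|^2\le e^{-2C_2^Ft}|x-y|^2+C_1^F/C_2^F$, never using atomicity.

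Under the extra hypotheses of Corollary~\ref{ATRTASs}, which is the only place the lemma is used, your route through the stationary law of $D$ is a valid alternative. It gives item 3) of that corollary directly. The unused contradiction set-up with $\kappa$ and the remark on the monotonicity of $|\Delta_t|$ can be dropped.
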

\proof
Let \( x, y \in \mathbb{R}^d \) and define
\[
\kappa^{t}_{(\omega^-,\omega^+)}(x,y) := \Phi^t_{(\omega^-, \omega^+)}(x) - \Phi^t_{(\omega^-, \omega^+)}(y).
\]
Then, we have
\begin{align*}
	\frac{\mathrm{d}}{\mathrm{d}t} \left| \kappa^{t}_{(\omega^-,\omega^+)}(x,y) \right|^2 
	&= 2 \left\langle F\left( \Phi^t_{(\omega^-, \omega^+)}(x) \right) - F\left( \Phi^t_{(\omega^-, \omega^+)}(y) \right), \kappa^{t}_{(\omega^-,\omega^+)}(x,y) \right\rangle \\
	&\leq 2C_{1}^{F} - 2C_{2}^{F} \left| \kappa^{t}_{(\omega^-,\omega^+)}(x,y) \right|^2.
\end{align*}
Thus, by Gr\"onwall's inequality,
\begin{align}\label{ASoapwsQ}
	\begin{split}  
		\left| \Phi^t_{(\omega^-, \omega^+)}(x) - \Phi^t_{(\omega^-, \omega^+)}(y) \right|^2 
		&\leq \exp\left( -2C_{2}^{F} t \right) |x - y|^2 
		+ 2C^F_1 \int_0^t \exp\left( -2C_{2}^{F} (t-\tau) \right) \, \mathrm{d}\tau\\&\leq  \exp\left( -2C_{2}^{F} t \right) |x - y|^2 +\frac{C_{1}^{F}}{C_{2}^{F}}.
	\end{split}
\end{align}
Note that for every \( \Gamma \in \sigma(\mathbb{R}^d) \otimes \sigma(\mathbb{R}^d) \), analogously to \eqref{TGB2} and \eqref{GABSs}, we have
\begin{align}\label{aisdfa}
	\begin{split}
		&\int_{\mathcal{B}} \int_{\mathbb{R}^d \times \mathbb{R}^d} \scalebox{1.2}{$\chi_{\Gamma}$}(x, y) \, \mu_{\omega^-}(\mathrm{d}x) \mu_{\omega^-}(\mathrm{d}y) \, \mathbf{P}(\mathrm{d}\omega^-) 
		\\&= \int_{\mathcal{B}} \int_{\tilde{\mathcal{B}}} \int_{\mathbb{R}^d \times \mathbb{R}^d} \scalebox{1.2}{$\chi_{\Gamma}$}\left(\Phi^{t}_{(\omega^-, \omega^+)}(x), \Phi^{t}_{(\omega^-, \omega^+)}(y)\right) \mu_{\omega^-}(\mathrm{d}x) \mu_{\omega^-}(\mathrm{d}y) \, \mathbf{P}(\mathrm{d}\omega^-) \mathbf{P}(\mathrm{d}\omega^+).
	\end{split}
\end{align}
For $\epsilon>0$, setting 
\[
\overline{C}:= \sqrt{\frac{C_{1}^F}{C_{2}^F}}+\epsilon,
\]
it follows from \eqref{ASoapwsQ} and the dominated convergence theorem that
\[
\lim_{t \rightarrow \infty} \int_{\mathcal{B}} \int_{\tilde{\mathcal{B}}} \int_{\mathbb{R}^d \times \mathbb{R}^d} 
\scalebox{1.2}{$\chi_{\Gamma_{\overline{C}}}$}\left(\Phi^{t}_{(\omega^-, \omega^+)}(x), \Phi^{t}_{(\omega^-, \omega^+)}(y)\right) 
\mu_{\omega^-}(\mathrm{d}x) \mu_{\omega^-}(\mathrm{d}y) \, \mathbf{P}(\mathrm{d}\omega^-) \mathbf{P}(\mathrm{d}\omega^+)=0.
\]

This, together with \eqref{aisdfa}, yields that
\begin{align}\label{spd65a}
	\begin{split}
		&\int_{\mathcal{B}} \int_{\mathbb{R}^d \times \mathbb{R}^d} \scalebox{1.2}{$\chi_{\Gamma_{\overline{C}}}$}(x, y) \, \mu_{\omega^-}(\mathrm{d}x) \mu_{\omega^-}(\mathrm{d}y) \, \mathbf{P}(\mathrm{d}\omega^-)\\&\quad=	\int_{\tilde{\mathcal{B}}} \left(\mu_{\omega^-} \times \mu_{\omega^-}\right)(\Gamma_{\overline{C}}) \, \mathbf{P}(\mathrm{d}\omega^-)=0.
	\end{split}
\end{align}
\qed
Now, we can state the following result.
\begin{corollary}\label{ATRTASs}
	Assume the hypotheses of the second statement of 
	Theorem~\ref{ASASiid} together with Proposition~\ref{POINT} hold. Then there exists an integer \( p \) and a collection of random variables \( \{ b_i \}_{1 \leq i \leq p} \), where each \( b_i : \tilde{\mathcal{B}} \rightarrow \mathbb{R}^d \), such that: 
	\begin{itemize}
		\item[1)] The family \( \overline{\mathcal{A}} = \{ \overline{\mathcal{A}}(\omega^-) \}_{\omega^- \in \mathcal{B}} \) is given by
		\[
		\overline{\mathcal{A}}(\omega^-) =
		\begin{cases}
			\left\{ b_i(\omega^-)\right\}_{1 \leq i \leq p}, & \text{if } \omega^- \in \tilde{\mathcal{B}}, \\
			\{0\}, & \text{otherwise}.
		\end{cases}
		\]
		Moreover, for every $\omega^-\in \tilde{\mathcal{B}}$
		\[
		\mu_{\omega^-} = \frac{1}{p} \sum_{i=1}^{p} \delta_{\{ b_i(\omega^-) \}}.
		\]    
		\item[2)] For every \( \epsilon > 0 \) and every \( x_0 \in \mathbb{R}^d \),
		\begin{align*}
			\lim_{t \rightarrow \infty} \frac{1}{t} \int_0^t \mathbf{P}\left( d\left( \varphi^s_{\omega^-}(x_0), \overline{\mathcal{A}}(\omega^-) \right) \geq \epsilon \right) \, \mathrm{d}s = 0.
		\end{align*}
		
		\item[3)] {For the constant $C$ given in Lemma \ref{JHJHJH}, we have } 
		\[
		\max_{1\leq i \neq j\leq p} \left| b_i(\omega^-) - b_j(\omega^-) \right| \leq C, \quad \mathbf{P}\text{-a.s.}
		\]
	\end{itemize}
\end{corollary}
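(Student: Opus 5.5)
The corollary essentially assembles three earlier results, so I will prove its three items in turn.

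For item 1), I invoke Proposition~\ref{descrte}. It yields $p\in\mathbb{N}$ and random variables $b_1,\dots,b_p:\tilde{\mathcal{B}}\to\mathbb{R}^d$ with $\mu_{\omega^-}=\frac1p\sum_{i=1}^p\delta_{\{b_i(\omega^-)\}}$. This identity holds on a set of full measure. I shrink $\tilde{\mathcal{B}}$ to that set if needed; the invariance property \eqref{eq:invariance} survives this, since it only requires $P_t(\omega^-,\omega^+)$ to stay in the set, and I can enforce that by intersecting over a countable dense set of times and using continuity, or simply state the identity $\mathbf{P}$-a.s.

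From the proof of Case I, the support of $\mu_{\omega^-}$ has exactly $p$ points and each carries mass $1/p$, so the $b_i(\omega^-)$ are pairwise distinct. The support of a finite sum of Dirac masses is exactly the set of atoms. Hence $\mathrm{supp}(\mu_{\omega^-})=\{b_i(\omega^-)\}_{1\le i\le p}$, and the formula for $\overline{\mathcal{A}}$ follows directly from its definition in Proposition~\ref{POINT}.

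For measurability of the $b_i$ as functions, I would order the atoms, for instance lexicographically. Each ordered atom is then measurable, because sets of the form $\{\omega^-:\mu_{\omega^-}(B)>0\}$ are measurable for rational boxes $B$, by the argument of Lemma~\ref{ILAS9sdzw}. Alternatively, I would apply the measurable selection result of Lemma~\ref{MESUA} iteratively.

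Item 2) is exactly Proposition~\ref{POINT} restated with the explicit form of $\overline{\mathcal{A}}$, so there is nothing new to prove.

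For item 3), I use Lemma~\ref{JHJHJH}. It gives $\int_{\tilde{\mathcal{B}}}(\mu_{\omega^-}\times\mu_{\omega^-})(\Gamma_{\overline C})\,\mathbf{P}(\mathrm{d}\omega^-)=0$, so $(\mu_{\omega^-}\times\mu_{\omega^-})(\Gamma_{\overline C})=0$ for $\mathbf{P}$-a.e.\ $\omega^-$. Now suppose $|b_i(\omega^-)-b_j(\omega^-)|>\overline C$ for some $i\neq j$. Then the pair $(b_i(\omega^-),b_j(\omega^-))$ is an atom of the product measure lying in $\Gamma_{\overline C}$, with mass $1/p^2>0$. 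This is a contradiction, so $\max_{i\neq j}|b_i-b_j|\le\overline C$ a.s., with $C=\overline C=\sqrt{C_1^F/C_2^F}+\epsilon$.

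The only mildly delicate step is the bookkeeping of null sets and the measurability of the labelling of atoms. Everything else is immediate from the cited results.
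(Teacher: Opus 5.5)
Your proposal is correct and follows the paper's proof. Items 1) and 2) are read off from Propositions~\ref{descrte} and~\ref{POINT}, and item 3) from Lemma~\ref{JHJHJH}; your pointwise atom argument (the pair $(b_i(\omega^-),b_j(\omega^-))$ carrying mass $1/p^2$ inside $\Gamma_{\overline C}$) is just the unintegrated form of the paper's identity $\sum_{i\neq j}p^{-2}\,\mathbf{P}\bigl(|b_i-b_j|>C\bigr)=0$. One small caveat: Lemma~\ref{MESUA} needs a compact set $A\subseteq\mathbb{R}^d\times\mathcal{Z}$, so it does not apply directly to the graph of $\omega^-\mapsto\mathrm{supp}(\mu_{\omega^-})$; your lexicographic labelling, or Kuratowski--Ryll-Nardzewski applied to this closed-valued, weakly measurable map (cf.\ Lemma~\ref{ILAS9sdzw}), is the right way to obtain measurable $b_i$.
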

\proof
The first two claims follow from Propositions~\ref{descrte} and \ref{POINT}. Note that the existence of the random variables \( \{ b_i \}_{1 \leq i \leq p} \) follows from any measurable selection theorem applied to the family \( \{ \mathrm{supp}(\mu_{\omega^-}) \}_{\omega^- \in \tilde{\mathcal{B}}} \). Given this collection of random variables, it follows from \eqref{spd65a} that
\begin{align*}
	&\int_{\tilde{\mathcal{B}}} \int_{\mathbb{R}^d \times \mathbb{R}^d} \scalebox{1.2}{$\chi_{\Gamma_C}$}(x, y) \, \mu_{\omega^-}(\mathrm{d}x) \mu_{\omega^-}(\mathrm{d}y) \, \mathbf{P}(\mathrm{d}\omega^-) 
	\\&\quad= \sum_{\substack{1 \leq i, j \leq p \\ i \neq j}} \frac{1}{p^2} \int_{\tilde{\mathcal{B}}}  \scalebox{1.2}{$\chi_{\Gamma_C}$}\big(b_i(\omega^-), b_j(\omega^-)\big) \, \mathbf{P}(\mathrm{d}\omega^-) \\ &\qquad = \sum_{\substack{1 \leq i, j \leq p \\ i \neq j}} \frac{1}{p^2} \mathbf{P}\left(\left| b_i(\omega^-) - b_j(\omega^-) \right| > C\right) = 0,
\end{align*}
which proves the last claim.  
\qed
\subsection{Weak synchronization}
In Subsection~\ref{SUPPAS}, we investigated the relationship between random measures and weak point attractors. In particular, we observed in Propositions~\ref{POINT} and~\ref{AUIASLc} that one can interpret the support of the random measures as minimal \(\mathrm{P}\)-weak point attractors in an averaged sense. 
Moreover, when the top Lyapunov exponent is negative, the support of these random measures is discrete. The question which arises is whether this support consists of a single point, as investigated in 
~\cite{FGS16a} for white-noise RDS.~To address this issue in the non-Markovian case, we first introduce an additional assumption on the drift.
\begin{assumptions}\label{AASQw98a}
	In addition to Assumption~\ref{Drift}, we assume that the drift term \(F\) is eventually strictly monotone. This means that there exist constants \(R > 0\) and \(C_4^F > 0\) such that for all \(x, y \in \mathbb{R}^d\) with \(|x|, |y| \geq R\),
	\begin{align}\label{monotone st11}
		\langle F(x) - F(y), x - y \rangle \leq -C_4^F |x - y|^2.
	\end{align}
\end{assumptions}
We further impose the following conditions:
\begin{assumptions}\label{AASQw98a1}
	\begin{itemize}
		\item[1)] There exists an integer \(p\) and a collection of random variables \(\{ b_i : \tilde{\mathcal{B}} \rightarrow \mathbb{R}^d \}_{1 \leq i \leq p}\) such that for every \(\omega^- \in \tilde{\mathcal{B}}\), the support of \(\mu_{\omega^-}\) is given by
		\[
		\mathrm{supp}(\mu_{\omega^-}) = \left\{ b_i(\omega^-) \mid 1 \leq i \leq p \right\}.
		\]
		\item[2)] There exists a deterministic constant \(C > 0\) and a set 
		\(\tilde{\Omega} \subset \Omega \), \(\tilde{\Omega} := \tilde{\mathcal{B}} \times \mathcal{B}\) of full measure such that for every 
		\(\omega = (\omega^-, \omega^+) \in \tilde{\Omega}\), we have
		\begin{align}\label{YAsjass}
			\mathcal{R}(\omega^-) := \max_{\substack{1 \leq i, j \leq p \\ i \neq j}} \left| b_i(\omega^-) - b_j(\omega^-) \right| \leq C .
		\end{align}
		Furthermore, the set $\tilde{\Omega}$ is invariant under $\theta_{t}$ meaning that  
		\[
		\theta_{t}(\tilde{\Omega}) \subseteq \tilde{\Omega} \text{  for all  } t\geq 0.
		\]
	\end{itemize}  
\end{assumptions}
\begin{remark} 
	The eventually strict monotonicity condition on the drift \(F\) is necessary to show that the top Lyapunov exponent of the SDE~\eqref{MAIN} is negative for increasing \(\sigma\), as established in \cite{BNGH26}. This implies condition~1) of the previous assumption, as shown in Proposition \ref{descrte}. 
\end{remark}
\begin{remark}\label{IAKSOded}
	Condition~2) of Assumption~\ref{AASQw98a1} is implied by the assumptions of Corollary~\ref{ATRTASs}. More precisely, Corollary~\ref{ATRTASs} yields the existence of a subset 
	\(\overline{\mathcal{B}} \subseteq \tilde{\mathcal{B}}\) such that 
	\(\mathbf{P}(\overline{\mathcal{B}}) = 1\) and
	\[
	\max_{i \neq j} \lvert b_i(\omega^-) - b_j(\omega^-) \rvert \leq C,
	\qquad \forall \, \omega^- \in \overline{\mathcal{B}}.
	\]
	By considering
	\[
	\overline{\Omega} := \bigcap_{\substack{k \in \mathbb{Z},\\ j \in \mathbb{N}}} 
	\theta_{\tfrac{k}{2^j}}(\overline{\mathcal{B}} \times \mathcal{B}),
	\]
	we obtain a full measure set that is $\theta_{\tfrac{k}{2^j}}$-invariant for every $j \in \mathbb{N}$ and $k\in\Z$.   
	Now, define 
	\begin{align}\label{tilde:omega}
		\tilde{\Omega} := \bigl\{ \omega \in \overline{\mathcal{B}} \times \mathcal{B} : 
		\forall t \in \mathbb{R}, \; \mathcal{R}(P_t(\omega^-,\omega^+)) \leq C \bigr\}.
	\end{align}
	By definition, $\tilde{\Omega}$ is $\theta_t$-invariant for every $t \in \mathbb{R}$ and $\tilde{\Omega}\subseteq \overline{\Omega}$.  
	For every $\omega \in \tilde{\mathcal{B}} \times \mathcal{B}$, we have
	\[
	\Phi^{t}_{\omega} \left\{ b_i(\omega^-) \right\}_{1 \leq i \leq p} 
	= \left\{ b_i(P_t(\omega^-,\omega^+)) \right\}_{1 \leq i \leq p}.
	\]
	Thus, the map 
	\begin{align}\label{RRAS}
		t \mapsto \mathcal{R}\!\left(P_{t}(\omega^-,\omega^+)\right)
	\end{align}
	is continuous.  
	Moreover, by the definition of $\overline{\Omega}$ on the dense set 
	\[
	\{t_i : i \in \mathbb{N}\} 
	= \bigl\{ \tfrac{k}{2^j} : k \in \mathbb{Z}, \, j \in \mathbb{N} \bigr\} \subseteq \mathbb{R},
	\]
	we have $\mathcal{R}(P_{t_i}(\omega^-,\omega^+)) \leq C $.
	Together with the continuity of the map \eqref{RRAS}, this implies that 
	\(\overline{\Omega} \subseteq \tilde{\Omega} \) 
	yielding that $\tilde{\Omega} $ has full measure.  
	
\end{remark}
We now define a weak type of synchronization in our non-Markovian setting. 
\begin{definition}\label{DEF:WEAK}
	We say that {weak synchronization} holds if there exists a random point \(b: \tilde{\mathcal{B}} \to \mathbb{R}^d\) such that for \(\mathbf{P}\)-almost every \(\omega^- \in \tilde{\mathcal{B}}\),
	\begin{align*}
		\mathrm{supp}(\mu_{\omega^-}) = \{ b(\omega^-) \}.
	\end{align*}
\end{definition}
\begin{remark}\label{KIIASSe}
	In \cite[Definition~2.16]{FGS16a}, weak synchronization means that the minimal point attractor is a singleton. As already stated in Remark~\ref{ARATSasd}, for strongly mixing white-noise RDS, the random measures $\mu_{\omega^-}$ are minimal point attractors.~In our case, we only know that $\text{supp}(\mu_{\omega^-})\subseteq \overline{\cA}(\omega^-)$, which motivates our definition.~Consequently, when weak synchronization in the sense of \cite[Definition~2.16]{FGS16a} holds, i.e.~$\overline{\cA}(\omega^-)=b(\omega^-)$, this implies that each random measure is a Dirac measure sitting on the equilibrium, i.e. $\mu_{\omega^-}=\delta_{b(\omega^-)}$. 
\end{remark} 
The following result provides a necessary and sufficient condition to verify weak synchronization. 
\begin{lemma}\label{SYNCH}
	Let \( \eta > 0 \) and suppose that 
	\begin{align}\label{ASA85sd}
		\lim_{n \to \infty} \chi_{B(0,\eta)}\!\left(\mathcal{R}\!\left(P_n(\omega^-, \omega^+)\right)\right) = 1, 
		\quad \mathbb{P}\text{-a.s.}.
	\end{align}
	Then
	\[
	\mathbf{P}\big(\mathcal{R}(\omega^-) < \eta\big) = 1.
	\]
	In particular, weak synchronization follows if \eqref{ASA85sd} holds for every \( \eta > 0 \).
\end{lemma}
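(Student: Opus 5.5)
The plan is to combine the invariance of $\mathbb{P}$ under the shift $\theta$ with dominated convergence. The key observation is that for every $n\in\mathbb{N}$ the first component of $\theta_n(\omega^-,\omega^+)$ is $P_n(\omega^-,\omega^+)$, by \eqref{semigr}. Since $\theta_n$ preserves $\mathbb{P}=\mathbf{P}\times\mathbf{P}$ (Theorem~\ref{RDS_SDS}), the random variable $\omega\mapsto \chi_{B(0,\eta)}\big(\mathcal{R}(P_n(\omega^-,\omega^+))\big)$ has the same law as $\omega\mapsto\chi_{B(0,\eta)}\big(\mathcal{R}(\omega^-)\big)$. Here $\mathcal{R}$ is defined on $\tilde{\mathcal{B}}$, and $P_n$ maps $\tilde{\mathcal{B}}\times\mathcal{B}$ into $\tilde{\mathcal{B}}$ by Lemma~\ref{RAND_MEASURE}, so everything is defined on a set of full measure. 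Measurability of $\mathcal{R}$ follows from the measurability of the $b_i$ in Assumption~\ref{AASQw98a1}.

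Concretely, I compute
\begin{align*}
\mathbf{P}\big(\mathcal{R}(\omega^-)<\eta\big)
=\int_{\Omega}\chi_{B(0,\eta)}\big(\mathcal{R}(\omega^-)\big)\,\mathbb{P}(\mathrm{d}\omega)
=\int_{\Omega}\chi_{B(0,\eta)}\big(\mathcal{R}(P_n(\omega^-,\omega^+))\big)\,\mathbb{P}(\mathrm{d}\omega).
\end{align*}
The first equality holds because the integrand depends only on $\omega^-$ and $\mathbb{P}$ has marginal $\mathbf{P}$. The second uses Lemma~\ref{pushforward} with $T=\theta_n$ and $(\theta_n)_\ast\mathbb{P}=\mathbb{P}$. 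The left-hand side does not depend on $n$. I then let $n\to\infty$ on the right: the integrands are bounded by $1$ and, by \eqref{ASA85sd}, converge to $1$ $\mathbb{P}$-a.s. Dominated convergence therefore gives $\mathbf{P}(\mathcal{R}(\omega^-)<\eta)=1$.

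For the final assertion, suppose \eqref{ASA85sd} holds for every $\eta>0$. Taking $\eta=1/k$ and intersecting over $k\in\mathbb{N}$ gives $\mathcal{R}(\omega^-)=0$ $\mathbf{P}$-a.s. By definition of $\mathcal{R}$ in \eqref{YAsjass}, this means $b_i(\omega^-)=b_j(\omega^-)$ for all $i,j$. Since $\mathrm{supp}(\mu_{\omega^-})=\{b_i(\omega^-)\}_{1\le i\le p}$ by Assumption~\ref{AASQw98a1}, the support is the single point $b(\omega^-):=b_1(\omega^-)$, which is weak synchronization in the sense of Definition~\ref{DEF:WEAK}. (Equivalently $p=1$, since the $b_i$ were chosen as distinct support points.)

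The only delicate step is justifying the change of variables under $\theta_n$ on the full-measure set $\tilde{\Omega}=\tilde{\mathcal{B}}\times\mathcal{B}$. I will handle it using the invariance $\theta_t(\tilde{\Omega})\subseteq\tilde{\Omega}$ from Assumption~\ref{AASQw98a1} (or Lemma~\ref{RAND_MEASURE}(5)), so that the composed function is well defined $\mathbb{P}$-a.s. No other obstacle is expected.
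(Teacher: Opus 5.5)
Your proof is correct and is essentially the paper's argument: invariance of $\mathbb{P}$ under $\theta_n$ identifies $\mathbf{P}(\mathcal{R}(\omega^-)<\eta)$ with $\mathbb{P}(\mathcal{R}(P_n(\omega^-,\omega^+))<\eta)$ for every $n$, and a limit theorem then gives the value $1$. You use dominated convergence where the paper uses Fatou's lemma, which makes no difference for integrands bounded by $1$, and intersecting over $\eta=1/m$ gives $\mathcal{R}=0$ almost surely; your remark that this forces the support to be a single point (equivalently $p=1$) only makes explicit the final step the paper leaves implicit.
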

\proof
From the assumption and Fatou's lemma, we have
\begin{align*}
	1 &= \mathbb{E} \left( \liminf_{n \to \infty} \chi_{B(0,\eta)}\left(\mathcal{R}\left(P_n(\omega^-, \omega^+)\right)\right) \right)\leq \liminf_{n \rightarrow \infty} \mathbb{E} \left( \chi_{B(0,\eta)}\left(\mathcal{R}\left(P_n(\omega^-, \omega^+)\right)\right) \right) \\
	&= \mathbb{P} \left( \mathcal{R}\left(P_n(\omega^-, \omega^+)\right) < \eta \right) = \mathbf{P} \left( \mathcal{R}(\omega^-) < \eta \right),
\end{align*}
where in the last step we used the fact that the family \( (\theta_t)_{t \geq 0} \) 
is \( \mathbb{P}\)-invariant (see Theorem \ref{RDS_SDS}). 
Now let $\mathcal{B}_\eta$ denote the set of $\mathbf{P}$-full measure such that for each $\omega \in \mathcal{B}_\eta$ one has $\mathcal{R}(\omega^-) \leq \eta$. It then follows that 
\[
\mathbf{P}\left(\bigcap_{m \geq 1} \mathcal{B}_{\frac{1}{m}}\right) = 1,
\]
and for each $\omega^-$ in this intersection, we have $\mathcal{R}(\omega^-) = 0$. This yields weak synchronization.
\qed
Therefore, it suffices to establish \eqref{ASA85sd} in order to prove weak synchronization. The main idea is to show that if the total mass of the projected measure $\mu$ in $\mathbb{R}^d$ is not concentrated in a certain region of $\R^d$ and is sufficiently spread out, then the size of the attractor (which is bounded by a deterministic constant) will eventually become zero in the long run, according to  Lemma \ref{SYNCH}.

The condition that the density is not concentrated in a certain region can be achieved by tuning the noise intensity, which will be considered in  Section~\ref{sec:ex}. 
\begin{definition}
	For each $\omega=(\omega^-,\omega^+)\in \tilde{\Omega}$, $n\in\N$ and each Borel set $A\subseteq \mathbb{R}^d$, we define
	\begin{align*}
		\mathcal{U}(n,\omega,A)
		:=
		\frac{1}{p}\sum_{1\leq i\leq p}
		\int_{0}^{n}
		\delta_{\{ b_{i}(P_{t}(\omega^-,\omega^+)) \}}(A)\,\mathrm{d}t.
	\end{align*}
	
	We also define the marginal of $\mu$ on $\mathbb{R}^d$ by
	\begin{align}\label{OLKasefzs}
		\pi(A):=[(\Pi_{\R^d})_{\star}(\mu)](A).  
	\end{align}
\end{definition}

\begin{lemma}\label{BIRK}
	For each Borel set $A\subseteq \mathbb{R}^d$, it holds that
	\begin{align*}
		\lim_{n\rightarrow \infty}\frac{1}{n}\mathcal{U}(n,\omega,A)
		=
		\pi(A)
		\qquad \mathbb{P}\text{-a.s.}
	\end{align*}
\end{lemma}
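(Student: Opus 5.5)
Since $\mu_{\omega^-}=\frac1p\sum_{i}\delta_{b_i(\omega^-)}$ for $\omega^-\in\tilde{\mathcal{B}}$ by Proposition~\ref{descrte} and Assumption~\ref{AASQw98a1}, the integrand of $\mathcal{U}$ is
\[
\frac1p\sum_{i=1}^p \delta_{\{b_i(P_t(\omega^-,\omega^+))\}}(A)=\mu_{P_t(\omega^-,\omega^+)}(A)=:f_A(\theta_t\omega),\qquad f_A(\omega^-,\omega^+):=\mu_{\omega^-}(A).
\]
Here we used $\theta_t(\omega^-,\omega^+)=(P_t(\omega^-,\omega^+),\vartheta_t\omega^+)$ and $P_t(\omega^-,\omega^+)\in\tilde{\mathcal{B}}$ (Lemma~\ref{RAND_MEASURE}). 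Thus $\frac1n\mathcal{U}(n,\omega,A)=\frac1n\int_0^n f_A(\theta_t\omega)\,\mathrm{d}t$ is a continuous-time Birkhoff average of a bounded measurable observable over the metric dynamical system $(\Omega,\mathcal{F},\mathbb{P},\theta)$. Measurability of $f_A$ is item 2 of Lemma~\ref{RAND_MEASURE}. Joint measurability of $(t,\omega)\mapsto f_A(\theta_t\omega)$ follows from the measurability of the flow in Theorem~\ref{RDS_SDS}.

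The plan is to apply Birkhoff's ergodic theorem. This gives a.s. convergence to the conditional expectation of $f_A$ with respect to the invariant $\sigma$-algebra. If $\theta$ is ergodic, that limit is
\[
\mathbb{E}f_A=\int_{\mathcal{B}}\mu_{\omega^-}(A)\,\mathbf{P}(\mathrm{d}\omega^-)=\mu(A\times\mathcal{B})=\pi(A),
\]
by the disintegration in Lemma~\ref{RAND_MEASURE}, item 3. To pass from the integer-time average to the continuous one, write $\int_0^n=\sum_{k=0}^{n-1}\int_0^1 f_A(\theta_{t}\theta_k\omega)\,\mathrm{d}t$. Apply the discrete Birkhoff theorem to $g_A(\omega):=\int_0^1 f_A(\theta_t\omega)\,\mathrm{d}t$ under $\theta_1$; note that $\mathbb{E}g_A=\mathbb{E}f_A$ by $\theta$-invariance of $\mathbb{P}$ and Fubini.

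The main obstacle is ergodicity of $\theta_1$ on $\Omega=\mathcal{B}\times\mathcal{B}$. I would not try to prove this directly. Instead I would lift to the skew product $\Theta_t$ on $\mathbb{R}^d\times\Omega$ with measure $\mu\times\mathbf{P}$, which is ergodic by Theorem~\ref{RDS_SDS}, item 4. The uniqueness in Theorem~\ref{RDS_SDS_MEAU} gives ergodicity of $\mu$, and the Wiener shift $\vartheta_t$ is ergodic (indeed mixing) under $\mathbf{P}$. Then set $\tilde f_A(x,\omega^-,\omega^+):=\mu_{\omega^-}(A)$, which satisfies $\tilde f_A\circ\Theta_t=f_A\circ\theta_t$. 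Apply Birkhoff to $\Theta_1$ and $\tilde g_A(x,\omega):=\int_0^1\tilde f_A(\Theta_t(x,\omega))\,\mathrm{d}t$. This yields convergence of $\frac1n\mathcal{U}(n,\omega,A)$ to $\int\tilde f_A\,\mathrm{d}(\mu\times\mathbf{P})$ for $(\mu\times\mathbf{P})$-a.e. $(x,\omega)$.

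Since the averages do not depend on $x$, and the $\Omega$-marginal of $\mu\times\mathbf{P}$ is $\mathbb{P}$ (the $\mathcal{B}$-marginal of $\mu$ is $\mathbf{P}$), Fubini converts this into a $\mathbb{P}$-a.s. statement in $\omega$. The limit constant is
\[
\int \mu_{\omega^-}(A)\,\mu(\mathrm{d}(x,\omega^-))\,\mathbf{P}(\mathrm{d}\omega^+)=\int_{\mathcal{B}}\mu_{\omega^-}(A)\,\mathbf{P}(\mathrm{d}\omega^-)=\pi(A).
\]
Finally, restricting to $\tilde{\Omega}$, which is $\theta$-invariant and of full measure, ensures that all the identities hold pointwise along the whole orbit.
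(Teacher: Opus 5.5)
Your argument is correct and is essentially the paper's proof. Your $g_A$ is exactly $\mathcal{U}(1,\cdot,A)$, the decomposition $\mathcal{U}(n,\omega,A)=\sum_{k=0}^{n-1}g_A(\theta_k\omega)$ is the paper's additivity identity, and the limit is identified as $\pi(A)$ through the invariance of $\mathbb{P}$ and the disintegration of $\mu$. The one difference is that the paper simply asserts ergodicity of $\theta_1$ on $\Omega$, whereas you derive the almost sure convergence from the ergodicity of the skew product $\Theta_1$ in Theorem~\ref{RDS_SDS}, using $\Pi_\Omega\circ\Theta_t=\theta_t\circ\Pi_\Omega$ and $(\Pi_\Omega)_\ast(\mu\times\mathbf{P})=\mathbb{P}$. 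This makes $(\Omega,\theta_1)$ a factor of an ergodic system, so it is a valid and slightly more careful justification of that step.
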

\proof
By definition, for $n,m\in\N$,
\begin{align*}
	\mathcal{U}(n+m,\omega,A)
	=
	\mathcal{U}(m,\theta_{n}\omega,A)
	+
	\mathcal{U}(n,\omega,A).
\end{align*}

Moreover, using the invariance of $\theta_t$ under the measure $\mathbb{P}$ for each $t\geq 0$, we obtain
\begin{align*}
	\frac{1}{p}\sum_{1\leq i\leq p}
	\mathbb{P}\Bigl(
	\omega\in\tilde{\Omega}:
	b_{i}(P_{t}(\omega^-,\omega^+))\in A
	\Bigr)
	&=
	\frac{1}{p}\sum_{1\leq i\leq p}
	\mathbf{P}\Bigl(
	\omega^-\in\tilde{\mathcal{B}}:
	b_{i}(\omega^-)\in A
	\Bigr) \\
	&=
	\pi(A).
\end{align*}

Consequently, by Birkhoff's ergodic theorem and the fact that $\theta_1:\Omega\to\Omega$ is ergodic, we conclude that 
\begin{align*}
	\lim_{n\rightarrow \infty}\frac{1}{n}\mathcal{U}(n,\omega,A)
	=
	\mathbb{E}\bigl(\mathcal{U}(1,\omega,A)\bigr)
	=
	\pi(A).
\end{align*}
\qed
In the following, we aim to quantify the Lebesgue measure of the trajectories of~\eqref{MAIN} belonging to contracting / expanding regions in $\R^d$.
\begin{definition}\label{MNaseea}
	We set
	\begin{align*}
		A_{1}
		&:= \overline{B(0,R+C)}, \\
		A_{2}
		&:= \mathbb{R}^{d}\setminus \overline{B(0,R+C)}.
	\end{align*}
	
	For each $\omega \in \tilde{\Omega}$ and $n\in\N$, we define
	\begin{align*}
		\mathcal{M}(n,\omega)
		&:=
		\left\{
		t\in [0,n]:
		b_{i}(P_{t}(\omega^-,\omega^+))\in A_1
		\text{ for all } 1\leq i \leq p
		\right\},
		\\
		\mathcal{N}(n,\omega)
		&:=
		[0,n]\setminus \mathcal{M}(n,\omega)
		\\
		&=
		\left\{
		t\in [0,n]:
		\text{there exists } 1\leq i \leq p
		\text{ such that }
		b_{i}(P_{t}(\omega^-,\omega^+))\in A_2
		\right\}.
	\end{align*}
\end{definition}

The following lemma provides a bound for $\mathcal{R}\!\left(P_{n}(\omega^-,\omega^+)\right)$.

\begin{lemma}\label{ESTSTSY} 
	For each $\omega \in \tilde{\Omega}$ and $n\in\N$, we have
	\begin{align*}
		\mathcal{R}\!\left(P_{n}(\omega^-,\omega^+)\right) 
		&\leq
		\mathcal{R}\!\left(\omega^-\right)
		\exp\!\left(
		- C_{4}^{F}\,\mu_{\mathrm{Leb}}(\mathcal{N}(n,\omega))
		+ C_{3}^{F}\,\mu_{\mathrm{Leb}}(\mathcal{M}(n,\omega))
		\right).
	\end{align*}
\end{lemma}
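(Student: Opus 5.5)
We plan to track the pairwise distances between the support points along the flow. Fix $\omega\in\tilde\Omega$ and indices $i\neq j$. By Lemma~\ref{RAND_MEASURE} and injectivity of $\Phi^t_\omega$, the finite set $\{b_k(\omega^-)\}_{k}$ is mapped bijectively by $\Phi^t_\omega$ onto $\{b_k(P_t(\omega^-,\omega^+))\}_k$. The labelling may permute in time, so we do not follow the labels $b_i(P_t\cdot)$. Instead we follow the trajectories $x_i(t):=\Phi^t_\omega(b_i(\omega^-))$, which at each time $t$ enumerate exactly the support of $\mu_{P_t(\omega^-,\omega^+)}$. In particular $\mathcal R(P_t(\omega^-,\omega^+))=\max_{i\neq j}|x_i(t)-x_j(t)|$, and $\mathcal M(n,\omega),\mathcal N(n,\omega)$ can equivalently be described through the $x_i(t)$.

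Set $\kappa_{ij}(t):=x_i(t)-x_j(t)$. The additive noise cancels in the difference, so $t\mapsto\kappa_{ij}(t)$ is $C^1$ and
\[
\tfrac{\mathrm d}{\mathrm dt}|\kappa_{ij}(t)|^2=2\langle F(x_i(t))-F(x_j(t)),\kappa_{ij}(t)\rangle .
\]
We then split according to the time sets of Definition~\ref{MNaseea}.

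On $\mathcal M(n,\omega)$ we only use the one-sided Lipschitz bound from Assumption~\ref{Drift} 2), which gives $\tfrac{\mathrm d}{\mathrm dt}|\kappa_{ij}|^2\le 2C_3^F|\kappa_{ij}|^2$.

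On $\mathcal N(n,\omega)$, some point $x_k(t)$ lies in $A_2$, so $|x_k(t)|>R+C$. Since $\omega\in\tilde\Omega$ and $\theta_t\tilde\Omega\subseteq\tilde\Omega$, Assumption~\ref{AASQw98a1} 2) gives $\mathcal R(P_t(\omega^-,\omega^+))\le C$. Hence every $x_l(t)$ satisfies $|x_l(t)|\ge |x_k(t)|-C> R$. This is precisely why the radius $R+C$ appears in $A_1$. The strict monotonicity \eqref{monotone st11} from Assumption~\ref{AASQw98a} then yields $\tfrac{\mathrm d}{\mathrm dt}|\kappa_{ij}|^2\le -2C_4^F|\kappa_{ij}|^2$.

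Combining the two cases, $\tfrac{\mathrm d}{\mathrm dt}|\kappa_{ij}|^2\le 2a(t)|\kappa_{ij}|^2$ with $a(t)=C_3^F\chi_{\mathcal M}(t)-C_4^F\chi_{\mathcal N}(t)$. The function $a$ is measurable because $\mathcal M(n,\omega)$ is closed: $t\mapsto x_l(t)$ is continuous and $A_1$ is closed. Grönwall's inequality in integral form then gives
\[
|\kappa_{ij}(n)|\le|\kappa_{ij}(0)|\exp\!\Big(\int_0^n a(t)\,\mathrm dt\Big)=|\kappa_{ij}(0)|\exp\!\big(-C_4^F\mu_{\mathrm{Leb}}(\mathcal N)+C_3^F\mu_{\mathrm{Leb}}(\mathcal M)\big).
\]
Bounding $|\kappa_{ij}(0)|\le\mathcal R(\omega^-)$ and taking the maximum over $i\neq j$ at time $n$, using the bijection from the first paragraph, yields the claim.

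The main obstacle is the step on $\mathcal N(n,\omega)$. There we must guarantee that all support points, not just one, lie outside $B(0,R)$ simultaneously for every $t$, and not only for almost every $t$. This needs the uniform-in-time bound $\mathcal R(P_t(\omega^-,\omega^+))\le C$ for all $t$, which is exactly what the $\theta_t$-invariance of $\tilde\Omega$ in Assumption~\ref{AASQw98a1} (cf.\ Remark~\ref{IAKSOded}) provides. The relabelling issue, which we avoid by working with $\Phi^t_\omega(b_i(\omega^-))$, is the other point requiring care.
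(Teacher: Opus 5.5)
Your argument is correct, but it handles the time integration differently from the paper.

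\textbf{The paper's route.} The paper never writes a differential inequality with a discontinuous coefficient. It partitions $[0,n]$ and assigns the rate $-C_4^F$ to a subinterval only if the whole subinterval lies in $\mathcal{N}(n,\omega)$, and $C_3^F$ otherwise. It then applies constant-rate Gr\"onwall to $\mathcal{R}$ on each piece. Finally it passes to the limit in the step functions $\alpha^m\ge\alpha$, using pointwise convergence (which needs $A_2$ open and the trajectories continuous) and dominated convergence.

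\textbf{Your route.} You observe that $\frac{\mathrm d}{\mathrm dt}|\kappa_{ij}|^2\le 2a(t)|\kappa_{ij}|^2$ holds at \emph{every} $t\in[0,n]$, and you integrate it once. This removes the discretisation and the openness argument entirely. Working pairwise before taking the maximum is equally valid, since the same rate applies to all pairs at each time.

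\textbf{One step to tighten.} The integral-form Gr\"onwall inequality needs a nonnegative kernel, and your $a$ takes the value $-C_4^F$. What you actually need is the integrating-factor version. Set $v(t):=|\kappa_{ij}(t)|^2\exp\bigl(-2\int_0^t a(s)\,\mathrm ds\bigr)$. It is absolutely continuous on $[0,n]$, being the product of a $C^1$ function and a Lipschitz one. It satisfies $v'\le 0$ at every Lebesgue point of $a$, hence is nonincreasing, which gives your bound.

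\textbf{What your version makes explicit.} First, on $\mathcal{N}(n,\omega)$ one needs $\mathcal{R}(P_t(\omega^-,\omega^+))\le C$ for all $t$. This follows from $\theta_t\tilde{\Omega}\subseteq\tilde{\Omega}$; the paper writes $\mathcal{R}(\omega^-)\le C$ at that step. Second, Lemma~\ref{RAND_MEASURE} together with injectivity only yields the set identity $\Phi^t_\omega(\mathrm{supp}(\mu_{\omega^-}))=\mathrm{supp}(\mu_{P_t(\omega^-,\omega^+)})$, not equality of labelled tuples. So following $\Phi^t_\omega(b_i(\omega^-))$ rather than $b_i(P_t(\omega^-,\omega^+))$ is the right choice. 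It is harmless in the paper's argument only because $\mathcal{R}$, $\mathcal{M}$ and $\mathcal{N}$ depend on the set alone.
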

\proof 
First, recall from Assumption~\ref{AASQw98a1} that for each $\omega=(\omega^-,\omega^+)\in\tilde{\Omega}$ and each $t\geq 0$,
\begin{align}\label{YAsjassssdss}
	\begin{split}
		\left(\Phi^{t}_{\omega} \bigl( b_i(\omega^-) \bigr)\right)_{1 \leq i \leq p}
		&=
		\bigl( b_i(P_t(\omega^-,\omega^+)) \bigr)_{1 \leq i \leq p}, \\
		\mathcal{R}(\omega^-)
		&\leq C.
	\end{split}
\end{align}
We define
\begin{align*}
	\alpha(t,\omega)
	:=
	-C_4^F \mathbf{1}_{\mathcal{N}(n,\omega)}(t)
	+
	C_3^F  \mathbf{1}_{\mathcal{M}(n,\omega)}(t).
\end{align*}

Obviously, $\alpha(t,\cdot)$ is measurable thanks to the continuity of the trajectories
\begin{align*}
	t \mapsto \Phi^{t}_{\omega}\bigl(b_{i}(\omega^-)\bigr), \qquad i=1,\dots,p.
\end{align*}

Let $\{t_{k,m}\}_{0\leq k\leq m}$ be a family of partitions of $[0,n]$ such that
\[
t_{0,m}=0<t_{1,m}<\cdots<t_{m-1,m}<t_{m,m}=n,
\]
with mesh size converging to zero as $m\to\infty$. Then we define a sequence of {measurable} functions
\begin{align}\label{JNass}
	\alpha^{m}(t,\omega)
	:=
	\sum_{k=0}^{m-2} c_{k,m}\,\mathbf{1}_{[t_{k,m},t_{k+1,m})}(t)
	+
	c_{m-1,m}\,\mathbf{1}_{[t_{m-1,m},t_{m,m}]}(t),
\end{align}
where, for $0\leq k\leq m-2$, we set
\begin{align*}
	c_{k,m}
	:=
	\begin{cases}
		-C_4^F, & \text{if } [t_{k,m},t_{k+1,m}) \subseteq \mathcal{N}(n,\omega), \\
		C_3^F, & \text{otherwise.}
	\end{cases}
\end{align*}

For $k=m-1$, we define $c_{m-1,m}$ analogously using the interval $[t_{m-1,m},t_{m,m}]$.
We prove that
\begin{align}\label{BNAsssss}
	\forall t\in [0,n]: \quad \alpha^{m}(t,\omega)\rightarrow \alpha(t,\omega)
	\quad \text{as } m\to \infty.
\end{align}
To this end, we consider two cases:
\begin{enumerate}
	\item If $t\in \mathcal{M}(n,\omega)$, then it is clear that for every $m$
	\begin{align*}
		\alpha^{m}(t,\omega)=\alpha(t,\omega)=C_{3}^{F}.
	\end{align*}
	\item If $t\in \mathcal{N}(n,\omega)$, then by the definition of $\mathcal{N}(n,\omega)$ and \eqref{YAsjassssdss}, there exists $1\leq i\leq p$ such that
	$
	\Phi^{t}_{\omega}\bigl(b_{i}(\omega^-)\bigr)\in A_2.$
	Since $A_2$ is open and the trajectory
	$s \mapsto \Phi^{s}_{\omega}(b_{i}(\omega^-))$
	is continuous, we deduce that there exists an interval $(a,b)$ with $t\in (a,b)$ such that
	\begin{align}
		\forall s\in [a,b]: \quad \Phi^{s}_{\omega}\bigl(b_{i}(\omega^-)\bigr)\in A_2.
	\end{align}
	Hence,
	$[a,b]\subseteq \mathcal{N}(n,\omega).$
	Since the mesh size of the partition $\{t_{k,m}\}_{0\leq k\leq m}$ converges to zero, there exists $M$ such that for all $m\geq M$, if for $K_t\in \lbrace 0,1,...,m-1 \rbrace$
	where
	$t\in [t_{K_t,m}, t_{K_t+1,m})$,
	(if $K_t=m-1$, we consider $[t_{m-1,m},t_{m,m}]$), then
	$
	t \in (a,b).
	$
	Therefore, since $[a,b]\subseteq \mathcal{N}(n,\omega)$, it follows that for all $m\geq M$,
	\begin{align*}
		\alpha^{m}(t,\omega)=\alpha(t,\omega)=-C_{4}^{F}.
	\end{align*}
\end{enumerate}
Now, for each partition $\{t_{k,m}\}_{0\leq k\leq m}$  and each $0\leq k<m$, we distinguish the following two cases:
\begin{enumerate}
	\item If $[t_{k,m},t_{k+1,m})\subseteq \mathcal{N}(n,\omega)$, then for each $t\in [t_{k,m},t_{k+1,m})$, it follows that there exists $j\in\{1,\ldots,p\}$ such that
	$b_{j}(P_{t}(\omega^-,\omega^+))\in A_2$. Since $\mathcal{R}(\omega^-)\leq C$
	we get for a fixed $i\in\{1,\ldots,p\}$ that
	\begin{align*}
		|b_i(P_t(\omega^-,\omega^+))| &\geq |b_j(P_t(\omega^-,\omega^+))| - |b_i(P_t(\omega^-,\omega^+))-b_j(P_t(\omega^-,\omega^+))|\\
		& \geq R+C- \cR(\omega^-) > R.
	\end{align*}
	In conclusion, 
	\begin{align}\label{Nbaww}
		|b_{i}(P_{t}(\omega^-,\omega^+))| \geq R,
		\qquad \forall t\in [t_{k,m},t_{k+1,m}), \ \forall\, 1\leq i\leq p. 
	\end{align}
	Thanks to~\eqref{monotone st11}, for all $|x|,|y|\geq R$ we have
	\begin{align*}
		\frac{\mathrm{d}}{\mathrm{d} t}\left| \Phi^{t}_{\omega}(x) - \Phi^{t}_{\omega}(y) \right|^2 
		&= 2\left\langle F(\Phi^{t}_{\omega}(x)) - F(\Phi^{t}_{\omega}(y)), \, \Phi^{t}_{\omega}(x) - \Phi^{t}_{\omega}(y) \right\rangle \\
		&\leq -2C^F_4 \left| \Phi^{t}_{\omega}(x) - \Phi^{t}_{\omega}(y) \right|^2.
	\end{align*}
	Keeping in mind~\eqref{YAsjassssdss}, this leads to	\begin{align*}
		\mathcal{R}\!\left(P_{t_{k+1,m}}(\omega^-,\omega^+)\right) 
		&\leq
		\mathcal{R}\!\left(P_{t_{k,m}}(\omega^-,\omega^+)\right)
		\exp\!\left(- C_{4}^{F} (t_{k+1,m}-t_{k,m})\right).
	\end{align*}
	
	\item If $[t_{k,m},t_{k+1,m})\not\subseteq \mathcal{N}(n,\omega)$, then we use Assumption~\ref{Drift} and argue analogously to the previous case to obtain
	\begin{align*}
		\mathcal{R}\!\left(P_{t_{k+1,m}}(\omega^-,\omega^+)\right) 
		&\leq
		\mathcal{R}\!\left(P_{t_{k,m}}(\omega^-,\omega^+)\right)
		\exp\!\left(C_{3}^{F} (t_{k+1,m}-t_{k,m})\right).
	\end{align*}
	
\end{enumerate}
Consequently, by iterating this procedure on each interval $[t_{k,m},t_{k+1,m}]$ and using~\eqref{JNass}, we obtain
\begin{align*}
	\mathcal{R}\!\left(P_{n}(\omega^-,\omega^+)\right) 
	\leq
	\mathcal{R}\!\left(\omega^-\right)
	\exp\!\left(\int_{0}^{n} \alpha^{m}(t,\omega)\,\mathrm{d}t\right).
\end{align*}

Thus, thanks to~\eqref{BNAsssss} and using the dominated convergence theorem, we infer that
\begin{align*}
	\mathcal{R}\!\left(P_{n}(\omega^-,\omega^+)\right) 
	\leq
	\mathcal{R}\!\left(\omega^-\right)
	\exp\!\left(\int_{0}^{n} \alpha(t,\omega)\,\mathrm{d}t\right).
\end{align*}

which completes the proof.
\qed
\begin{remark}
	The argument at first glance might suggest that one can follow the same procedure to obtain a similar bound on $\|\Phi^{n}_{\omega}(x)-\Phi^{n}_{\omega}(y)\|$. However, in the proof (see~\eqref{Nbaww}), 
	we used the fact that for each $\omega^-$ one has $\mathcal{R}(\omega^-)\leq C$. Thus, even if the trajectories initially satisfy the required bound, this property cannot be guaranteed for large $n$.
\end{remark}
\begin{remark}
	Let $\Gamma\subset \mathbb{R}^d$ be a Borel set with Lebesgue measure zero.~Then, by Lemma \ref{BIRK} 
	\begin{align*}
		\lim_{n\to\infty}\frac{1}{n}\mathcal{U}(n,\omega,\Gamma)
		=\pi(\Gamma)=0,
		\qquad
		\mathbb{P}\text{-a.s.},
	\end{align*}
	where the last equality follows from~\cite[Theorem 1.1]{LPS23}.
	In particular, this implies that the trajectory
	$t \mapsto \Phi^{t}_{\omega}\bigl(b_i(\omega^-)\bigr)$, for  $1 \leq i \leq p$, spends a negligible amount of time on the boundary $\partial A_1$ in an averaged sense.
\end{remark}
The next lemma provides an average growth estimate for the Lebesgue measures of the sets
$\mathcal{M}(n,\omega)$ and
$\mathcal{N}(n,\omega)$.

\begin{lemma}\label{BNassssaq}
	For $\mathbb{P}$-almost every $\omega\in\tilde{\Omega}$, we have
	\begin{align*}
		\limsup_{n\rightarrow \infty}
		\frac{\mu_{\mathrm{Leb}}(\mathcal{M}(n,\omega))}{n}
		\leq
		\pi(A_1),
	\end{align*}
	and
	\begin{align*}
		\liminf_{n\rightarrow \infty}
		\frac{\mu_{\mathrm{Leb}}(\mathcal{N}(n,\omega))}{n}
		\geq
		\pi(A_2)=1-\pi(A_1).
	\end{align*}
\end{lemma}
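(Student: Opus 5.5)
The plan is to compare $\mathcal{M}(n,\omega)$ and $\mathcal{N}(n,\omega)$ with the occupation functional $\mathcal{U}(n,\omega,\cdot)$ and then apply Lemma~\ref{BIRK}. The key observation is a pointwise inequality between indicators. If $t\in\mathcal{M}(n,\omega)$, then $b_i(P_t(\omega^-,\omega^+))\in A_1$ for every $1\le i\le p$. Hence
\[
\mathbf{1}_{\mathcal{M}(n,\omega)}(t)\le \frac1p\sum_{i=1}^p \delta_{\{b_i(P_t(\omega^-,\omega^+))\}}(A_1),
\]
because the right-hand side equals $1$ on $\mathcal{M}(n,\omega)$ and is nonnegative elsewhere. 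Integrating over $[0,n]$ gives
\[
\mu_{\mathrm{Leb}}(\mathcal{M}(n,\omega))\le \mathcal{U}(n,\omega,A_1).
\]
Measurability of $t\mapsto \mathbf{1}_{\mathcal{M}(n,\omega)}(t)$ follows as in the proof of Lemma~\ref{ESTSTSY}. The maps $t\mapsto b_i(P_t(\omega^-,\omega^+))=\Phi^t_\omega(b_i(\omega^-))$ are continuous and $A_1$ is closed, so $\mathcal{M}(n,\omega)$ is in fact a closed subset of $[0,n]$.

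Next, divide by $n$ and let $n\to\infty$. Lemma~\ref{BIRK} applied to the Borel set $A_1$ gives $\frac1n\mathcal{U}(n,\omega,A_1)\to\pi(A_1)$ for $\mathbb{P}$-a.e. $\omega$. This yields
\[
\limsup_{n\to\infty}\frac{\mu_{\mathrm{Leb}}(\mathcal{M}(n,\omega))}{n}\le \pi(A_1)
\]
on a full-measure subset of $\tilde{\Omega}$. Since $\tilde\Omega$ has full measure by Assumption~\ref{AASQw98a1}, intersecting the two sets causes no loss.

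For the second claim, use $\mu_{\mathrm{Leb}}(\mathcal{N}(n,\omega))=n-\mu_{\mathrm{Leb}}(\mathcal{M}(n,\omega))$. Then
\[
\liminf_{n\to\infty}\frac{\mu_{\mathrm{Leb}}(\mathcal{N}(n,\omega))}{n}=1-\limsup_{n\to\infty}\frac{\mu_{\mathrm{Leb}}(\mathcal{M}(n,\omega))}{n}\ge 1-\pi(A_1).
\]
Since $\pi$ is a probability measure on $\mathbb{R}^d$ and $A_2=\mathbb{R}^d\setminus A_1$, we have $1-\pi(A_1)=\pi(A_2)$.

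I expect no serious obstacle. The only points to check carefully are the ones already covered above: that Lemma~\ref{BIRK} holds for the specific closed set $A_1$, that the exceptional null set does not depend on $n$, and the measurability of $\mathcal{M}(n,\omega)$.
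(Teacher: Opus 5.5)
Your proposal is correct and follows essentially the paper's proof: the pointwise bound $\mathbf{1}_{\mathcal{M}(n,\omega)}(t)\le \frac1p\sum_i \delta_{\{b_i(P_t(\omega^-,\omega^+))\}}(A_1)$, integration over $[0,n]$, and then Lemma~\ref{BIRK}. The only difference is cosmetic. The paper gets the second claim from the reverse bound $\frac1p\sum_i\delta_{\{b_i(P_t(\omega^-,\omega^+))\}}(A_2)\le\mathbf{1}_{\mathcal{N}(n,\omega)}(t)$ together with Lemma~\ref{BIRK} applied to $A_2$. You instead deduce it from the first claim via $\mu_{\mathrm{Leb}}(\mathcal{N}(n,\omega))=n-\mu_{\mathrm{Leb}}(\mathcal{M}(n,\omega))$, which is equally valid.
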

\proof
From the definition of $\cM$, we have for $t\in[0,n]$ that 
\[ \chi_{\mathcal{M}(n,\omega)}(t) \leq \frac{1}{p} \sum\limits_{i=1}^p \chi_{A_1}(b_i(P_t(\omega^-,\omega^+))) \]
and therefore
\begin{align*}
	\mu_{\mathrm{Leb}}(\mathcal{M}(n,\omega))
	\leq
	\frac{1}{p}\sum_{i=1}^{p}
	\int_{0}^{n}
	\delta_{\{ b_{i}(P_{t}(\omega^-,\omega^+)) \}}(A_1)\,\mathrm{d}t.
\end{align*}
Similarly
\[ \frac{1}{p} \sum\limits_{i=1}^p \chi_{A_2} (b_i(P_t(\omega^-,\omega^+))) \leq \chi_{\mathcal{N}(n,\omega)}(t) ,\]
which leads to 
\begin{align*}
	\frac{1}{p}\sum_{i=1}^{p}
	\int_{0}^{n}
	\delta_{\{ b_{i}(P_{t}(\omega^-,\omega^+)) \}}(A_2)\,\mathrm{d}t
	\leq
	\mu_{\mathrm{Leb}}(\mathcal{N}(n,\omega)).
\end{align*}
The claim now follows from Lemma~\ref{BIRK}.
\qed
We can now state the main result of this section.
\begin{theorem}\label{WESYN}
	Suppose that
	\begin{align}\label{AHSyss}
		\pi(A_1)
		<
		\frac{C_{4}^{F}}{C_{3}^{F}+C_{4}^{F}}.
	\end{align}
	Then, under Assumptions \ref{AASQw98a} and \ref{AASQw98a1}, for every $\eta > 0$,
	\[
	\mathbf{P}\!\left(\mathcal{R}(\omega^-) < \eta \right) = 1.
	\]
	This implies weak synchronization for \eqref{MAIN}.
\end{theorem}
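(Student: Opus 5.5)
The plan is to verify the hypothesis \eqref{ASA85sd} of Lemma~\ref{SYNCH} for every $\eta>0$. Lemma~\ref{SYNCH} then gives $\mathbf{P}(\mathcal{R}(\omega^-)<\eta)=1$ for every $\eta>0$, and hence weak synchronization. The strategy is to show that in fact $\mathcal{R}(P_n(\omega^-,\omega^+))\to 0$ $\mathbb{P}$-a.s., exponentially fast. This is obtained by combining the pathwise contraction estimate of Lemma~\ref{ESTSTSY} with the ergodic occupation-time bounds of Lemma~\ref{BNassssaq}.

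Fix $\omega$ in the full-measure subset of $\tilde{\Omega}$ on which Lemma~\ref{BNassssaq} holds. By Lemma~\ref{ESTSTSY} and condition~2) of Assumption~\ref{AASQw98a1}, $\mathcal{R}(\omega^-)\le C$, so
\[
\mathcal{R}(P_n(\omega^-,\omega^+))\le C\exp\Bigl(n\Bigl[-C_4^F\tfrac{\mu_{\mathrm{Leb}}(\mathcal{N}(n,\omega))}{n}+C_3^F\tfrac{\mu_{\mathrm{Leb}}(\mathcal{M}(n,\omega))}{n}\Bigr]\Bigr).
\]
Taking $\limsup$ of the bracket and applying Lemma~\ref{BNassssaq} (the $\limsup$ bound for $\mathcal{M}$ and the $\liminf$ bound for $\mathcal{N}$), we get
\[
\limsup_{n\to\infty}\Bigl[\cdots\Bigr]\le -C_4^F(1-\pi(A_1))+C_3^F\pi(A_1)=(C_3^F+C_4^F)\pi(A_1)-C_4^F=:-2\gamma .
\]
By \eqref{AHSyss}, $\gamma>0$. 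Hence for all $n\ge n_0(\omega)$ we have $\mathcal{R}(P_n(\omega^-,\omega^+))\le Ce^{-\gamma n}$, which tends to $0$.

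Consequently, for any $\eta>0$ the value $\mathcal{R}(P_n(\omega^-,\omega^+))$ eventually lies in $B(0,\eta)$, so $\chi_{B(0,\eta)}(\mathcal{R}(P_n(\omega^-,\omega^+)))\to1$ $\mathbb{P}$-a.s. This is exactly \eqref{ASA85sd}, and Lemma~\ref{SYNCH} concludes the proof, including the weak synchronization statement via the intersection over $\eta=1/m$.

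The main point needing care is measure-theoretic bookkeeping rather than analysis. Lemma~\ref{ESTSTSY} must be applied on the $\theta_t$-invariant set $\tilde{\Omega}$, and this set has to be intersected with the full-measure sets coming from Lemma~\ref{BIRK} for $A=A_1$ and $A=A_2$, which are only countably many Borel sets. We also need $\pi(A_1)+\pi(A_2)=1$, which holds trivially since $A_2$ is the complement of $A_1$. No boundary issue arises, because Lemma~\ref{BNassssaq} already gives the inequalities in the required directions.
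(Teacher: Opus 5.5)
Your proposal is correct and follows the paper's proof essentially step for step. You combine the pathwise bound of Lemma~\ref{ESTSTSY} (using $\mathcal{R}(\omega^-)\le C$) with the occupation-time bounds of Lemma~\ref{BNassssaq} to get a strictly negative exponential rate under \eqref{AHSyss}, deduce $\mathcal{R}(P_n(\omega^-,\omega^+))\to 0$ $\mathbb{P}$-a.s., and conclude via Lemma~\ref{SYNCH}; your explicit rate $Ce^{-\gamma n}$ and your remark on intersecting the full-measure sets only spell out what the paper leaves implicit.
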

\proof
By Lemma~\ref{ESTSTSY}, we have
\begin{align}\label{NBgta}
	\mathcal{R}\!\left(P_{n}(\omega^-,\omega^+)\right) 
	&\leq
	\mathcal{R}\!\left(\omega^-\right)
	\exp\!\left(
	n\left(
	- C_{4}^{F}\,\frac{\mu_{\mathrm{Leb}}(\mathcal{N}(n,\omega))}{n}
	+ C_{3}^{F}\,\frac{\mu_{\mathrm{Leb}}(\mathcal{M}(n,\omega))}{n}
	\right)\right).
\end{align}
Thus, it follows from Lemma~\ref{BNassssaq}
\begin{align}\label{BVasss}
	\limsup_{n\to\infty} \Big( -C^{F}_4 \frac{\mu(N)
	}{n} + C^{F}_3 \frac{\mu(M)}{n} \Big)\leq -C^{F}_4 (1-\pi(A_1)) + C^{F}_3 \pi(A_1)<0
\end{align}
Since \(\mathcal{R}\) is a non-negative random variable, it follows from \eqref{NBgta} and \eqref{BVasss} that
\[
\lim_{n\to\infty}
\mathcal{R}\!\left(P_{n}(\omega^-,\omega^+)\right)
=0,
\qquad \mathbb{P}\text{-a.s.}
\]
This immediately implies \eqref{ASA85sd}. Therefore, the claim follows from Lemma~\ref{SYNCH}.

\qed

\section{Applications}\label{sec:ex}
We now show how to verify weak synchronization for the SDE~\eqref{MAIN}.~As already stated, the first step in this direction is given by the negativity of the top Lyapunov exponent.~This holds true once the diffusion coefficient $\sigma$ is sufficiently large, as established in~\cite{BNGH26}. 
\begin{definition}
For $\kappa > 0$ and $\theta \geq 1$, define
\[
T_{\theta,\kappa}
:= \left\{
\sigma \in GL(d,\mathbb{R})
\;\middle|\;
\|\sigma\| \ge \kappa,\;
\|\sigma\|\,\|\sigma^{-1}\| \leq \theta
\right\},
\]
where $\|\cdot\|$ denotes the standard operator norm induced by the Euclidean norm on $\mathbb{R}^d$ and $GL(d,\mathbb{R})$ is the space of invertible $d \times d$ matrices.
\end{definition}
Obviously, for $\kappa_1 > \kappa_2$ we have the inclusion
\begin{align}\label{KLassd}
T_{\theta, \kappa_1} \subset T_{\theta, \kappa_2}.
\end{align}
\begin{proposition}\label{EXASA}
Consider the SDE~\eqref{MAIN} under Assumptions~\ref{Drift}, together with the additional requirement that condition~4) is satisfied for every $H \in (0,1)$. Moreover, suppose that Assumption~\ref{AASQw98a} holds. Then, for every $\theta \geq 1$,  there exists a constant $\kappa_\theta > 0$ such that for each $\sigma \in T_{\theta,\kappa_\theta}$, the top Lyapunov exponent $\lambda^\sigma_1$ satisfies 
\begin{align*}
	\lambda^{\sigma}_1 < 0.
\end{align*}
Furthermore, assuming that $\kappa_\theta$ is sufficiently large implies \eqref{AHSyss}.
\end{proposition}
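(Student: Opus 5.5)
The proof splits into two parts. The first claim, negativity of $\lambda^\sigma_1$ for $\sigma \in T_{\theta,\kappa_\theta}$, I will take from \cite{BNGH26}, where it is established under exactly Assumption~\ref{Drift} (with condition 4) for all $H$) together with the eventual strict monotonicity \eqref{monotone st11}. The mechanism there is to write the top exponent, by the multiplicative ergodic theorem over the metric dynamical system \eqref{MERTCIDY}, as an ergodic average
\[
\lambda_1^\sigma \le \int_{\R^d} \lambda_{\max}\bigl(\mathrm{D}_x F\bigr)_{\mathrm{sym}} \,\pi^\sigma(\mathrm{d}x).
\]
One then splits $\R^d$ into $\overline{B(0,R)}$ and its complement. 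On the complement, \eqref{monotone st11} gives $\lambda_{\max}((\mathrm{D}_xF)_{\mathrm{sym}}) \le -C_4^F$. On the ball, Assumption~\ref{Drift} 2) gives the bound $C_3^F$. Hence
\[
\lambda_1^\sigma \le C_3^F\pi^\sigma(\overline{B(0,R)}) - C_4^F\bigl(1-\pi^\sigma(\overline{B(0,R)})\bigr).
\]
It remains to show that $\pi^\sigma(\overline{B(0,R)})\to 0$ uniformly over $\sigma\in T_{\theta,\kappa}$ as $\kappa\to\infty$.

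The second claim follows the same pattern with $R$ replaced by $R+C$. The key observation is that the constant $C$ in Assumption~\ref{AASQw98a1} can be taken as $\overline{C}=\sqrt{C_1^F/C_2^F}+\epsilon$ from Lemma~\ref{JHJHJH}. This constant does not depend on $\sigma$, because the estimate \eqref{ASoapwsQ} uses only the drift (the additive noise cancels in differences). So $A_1=\overline{B(0,R+C)}$ is a fixed compact set independent of $\sigma$, and \eqref{AHSyss} follows once
\[
\sup_{\sigma\in T_{\theta,\kappa}}\pi^\sigma(A_1)\xrightarrow{\kappa\to\infty}0.
\]
By \eqref{KLassd}, enlarging $\kappa_\theta$ preserves the first claim, so one $\kappa_\theta$ serves for both statements.

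The main obstacle is this uniform smallness of the stationary mass on a fixed ball. I would use the Gaussian-type density bounds for the stationary marginal from \cite{LPS23}, tracking their dependence on $\sigma$. The upper bound should read
\[
\pi^\sigma(\mathrm{d}x)\le c\,\det(\sigma\sigma^T)^{-1/2}\, g\bigl(\sigma^{-1}x\bigr)\,\mathrm{d}x
\]
with $g$ bounded and $c$ depending only on $F$, $H$, $d$ and $\theta$. This gives $\pi^\sigma(A_1)\lesssim |A_1|\,\|\sigma^{-1}\|^d \le |A_1|\,(\theta/\|\sigma\|)^d \le |A_1|(\theta/\kappa)^d$. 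If tracking the constants in \cite{LPS23} proves delicate, my fallback is a scaling argument. Setting $Y=\sigma X$, the process $X$ solves $\mathrm{d}X=\sigma^{-1}F(\sigma X)\,\mathrm{d}t+\mathrm{d}B^H$, and the bounded condition number $\theta$ keeps the rescaled drift uniformly dissipative. I would then combine a uniform moment or anti-concentration estimate for $X$ near the origin with the fact that $\sigma^{-1}A_1$ shrinks to a ball of radius at most $(R+C)\theta/\|\sigma\|$. Either route yields the required $\kappa_\theta$.
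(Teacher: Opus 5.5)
Your outline matches the paper's. Negativity of $\lambda_1^\sigma$ is imported from \cite{BNGH26}; the paper simply cites \cite[Theorem~5.19]{BNGH26}, and your ergodic-average sketch is a reasonable account of why it holds. The condition \eqref{AHSyss} is then reduced to $\sup_{\sigma\in T_{\theta,\kappa}}\pi^\sigma(\overline{B(0,R+C)})\to 0$ via a $\sigma$-uniform bound on the stationary density at scale $\|\sigma\|$, and a single $\kappa_\theta$ is obtained from \eqref{KLassd}. Your observation that $C=\sqrt{C_1^F/C_2^F}+\epsilon$ from Lemma~\ref{JHJHJH} does not depend on $\sigma$ is correct. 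The paper uses it tacitly when it lets its density constants depend on $R+C$.

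The weak point is how you justify that uniform density bound. Your main route only asserts its form (``should read''). Your fallback, the matrix rescaling $X=\sigma^{-1}Y$, rests on a false claim: bounded condition number does not keep $\sigma^{-1}F(\sigma\,\cdot)$ dissipative in the Euclidean inner product. Indeed, $\langle\sigma^{-1}(F(y_2)-F(y_1)),\sigma^{-1}(y_2-y_1)\rangle=\langle F(y_2)-F(y_1),(\sigma\sigma^T)^{-1}(y_2-y_1)\rangle$, which the monotonicity of $F$ does not control unless $\sigma$ is a multiple of an orthogonal matrix. Concretely, $F(y)=Ay$ with $A=\begin{pmatrix}-1&K\\-K&-1\end{pmatrix}$ satisfies Assumptions~\ref{Drift} and~\ref{AASQw98a}. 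Yet for $\sigma=\mathrm{diag}(\lambda,\lambda\theta)\in T_{\theta,\lambda\theta}$, the symmetric part of $\sigma^{-1}A\sigma$ has top eigenvalue $-1+K(\theta-\theta^{-1})/2$, which is positive for large $K$. Also, a uniform moment bound would not help even if available: you need small-ball (anti-concentration) control near the origin, i.e.\ a density bound.

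The paper avoids this by rescaling with the scalar $\|\sigma\|$. The process $Z=\|\sigma\|^{-1}Y$ solves the equation with drift $F^\sigma(\xi)=\|\sigma\|^{-1}F(\|\sigma\|\xi)$ and noise matrix $\|\sigma\|^{-1}\sigma$. For $\|\sigma\|\ge1$ all drift constants stay uniform: $\|\sigma\|^{-2}C_1^F\le C_1^F$, the constants $C_2^F,C_3^F,C_4^F$ are unchanged, $\|DF^\sigma\|\le\sup\|DF\|$, and eventual monotonicity holds outside $B(0,R/\|\sigma\|)$. The noise matrix has norm $1$ and condition number at most $\theta$. Then $\pi^\sigma(B(0,R+C))=\tilde\pi^\sigma(\|\sigma\|^{-1}B(0,R+C))$ by \cite[Lemma~5.8]{BNGH26}. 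The uniform Gaussian bound $\sup_{\sigma\in T_{\theta,\kappa}}\tilde p^\sigma_\infty(y)\le C_1e^{-C_2|y|^2}$ from \cite[Propositions~5.17 and~5.18]{BNGH26} then gives $\pi^\sigma(A_1)\lesssim (R+C)^d\|\sigma\|^{-d}$, which is exactly the estimate you were after. With this replacement your argument goes through.
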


\proof

For the negativity of the top Lyapunov exponent, we refer to \cite[Theorem~5.19]{BNGH26}.~Here we only verify \eqref{AHSyss} which follows by similar arguments as \cite[Theorem~5.19]{BNGH26}. For the convenience of the reader, we sketch the main ideas.\\

The goal is to show that, for a fixed $\theta$ and $\sigma \in T_{\theta,\kappa}$, where $\kappa$ is sufficiently large, the total mass of $\pi=\pi^{\sigma}$ (recall~\eqref{OLKasefzs}) on $B(0,R+C)$ becomes arbitrarily small. To this end, we rescale the SDE~\eqref{MAIN} as follows
\begin{align}\label{RESCA}
\begin{cases}
	\txtd Z_t^\sigma = \|\sigma\|^{-1} F(\|\sigma\| Z_t^\sigma) \, \txtd t + \|\sigma\|^{-1} \sigma \, \txtd B_t^H, \\
	Z_0^\sigma=x \in \mathbb{R}^d.
\end{cases}
\end{align}
Let
\( (Z_{t,x}^\sigma)_{t \geq 0} \) denote its solution. Then, for every \( t \geq 0 \) and \( x \in \mathbb{R}^d \), it is easy to see that
\begin{align*}
Z_{t,x}^{\sigma} = \|\sigma\|^{-1} Y_{t,\|\sigma\| x}^{\sigma},
\end{align*}
where $Y^\sigma_{t,x}$ denotes the solution of~\eqref{MAIN} starting from $x\in\R^d$.
Let
\begin{align}\label{IK63asd}
F^{\sigma}(\xi) := \|\sigma\|^{-1} F(\|\sigma\|\xi).
\end{align}
Then for every \(\xi_1,\xi_2 \in \mathbb{R}^d\) and for all \(\|\sigma\|\ge 1\), we have
\begin{align}
\begin{split}
	\langle F^{\sigma}(\xi_2) - F^{\sigma}(\xi_1), \xi_2-\xi_1 \rangle
	&= \|\sigma\|^{-2}
	\left\langle
	F(\|\sigma\|\xi_2)-F(\|\sigma\|\xi_1),
	\|\sigma\|(\xi_2-\xi_1)
	\right\rangle \\
	&\leq \min \left\{
	\|\sigma\|^{-2} C_1^F - C_2^F |\xi_2-\xi_1|^2,\;
	C_3^F |\xi_2-\xi_1|^2
	\right\} \\
	&\leq \min \left\{
	C_1^F - C_2^F |\xi_2-\xi_1|^2,\;
	C_3^F |\xi_2-\xi_1|^2
	\right\}.
\end{split}
\end{align}

Since \(DF\) is globally bounded, there exists \(\overline{C}_F>0\) such that for all \(\|\sigma\|\ge 1\),
\begin{align}
|F^{\sigma}(\xi)| \leq \overline{C}_F (1 + |\xi|),
\qquad
\|D_\xi F^{\sigma}(\xi)\| \leq \overline{C}_F,
\quad \text{for all } \xi \in \mathbb{R}^d.
\end{align}
Finally, for \(|\xi_1|,|\xi_2| \ge \frac{R}{\|\sigma\|}\),
\begin{align}
\langle F^{\sigma}(\xi_2) - F^{\sigma}(\xi_1), \xi_2-\xi_1 \rangle
\leq -C_4^F |\xi_2-\xi_1|^2.
\end{align}
Thus, thanks to Theorem~\ref{RDS_SDS_MEAU}, the SDE~\eqref{RESCA} admits a unique invariant measure $\tilde{\mu}^{\sigma}$. Let $\tilde{\pi}^{\sigma}$ be the projected measure associated with $\tilde{\mu}^{\sigma}$ as defined in~\eqref{OLKasefzs}. By \cite[Lemma~5.8]{BNGH26}, we have
\begin{align}\label{UAISd96a}
\pi^{\sigma}\bigl(B(0,R+C)\bigr)
=
\tilde{\pi}^{\sigma}\bigl(\|\sigma\|^{-1}B(0,R+C)\bigr).
\end{align}
Furthermore, by \cite[Proposition~5.17]{BNGH26}, the measure $\tilde{\pi}^{\sigma}$ admits a density $\tilde{p}^{\sigma}_{\infty}$ with respect to the Lebesgue measure. Hence,
\begin{align}\label{OLLAs63a}
\pi^{\sigma}\bigl(B(0, R+C)\bigr)
=
\int_{\|\sigma\|^{-1} B(0,R+C)} \tilde{p}^{\sigma}_{\infty}(y)\,\mathrm{d}y.
\end{align}
Moreover, by \cite[Proposition~5.18]{BNGH26}, there exist constants \(C_1,C_2>0\), depending only on \(\theta\), \(H\), \(\kappa\), and \(R+C\), such that
\begin{align}
\sup_{\sigma \in T_{\theta,\kappa}}
\tilde{p}^{\sigma}_{\infty}(y)
\leq
C_1 \exp(-C_2 |y|^2),
\qquad \text{for all } y \in B(0,R+C).
\end{align}
Note that this bound is uniform in \(\sigma \in T_{\theta,\kappa}\). Keeping \eqref{KLassd} in mind, choosing \(\kappa\) sufficiently large, the term in~\eqref{OLLAs63a}
can be made arbitrarily small for each \(\sigma \in T_{\theta,\kappa}\). Therefore, the condition~\eqref{AHSyss} is satisfied in this regime.
\qed
In particular, this yields the following result.
\begin{corollary}\label{KMNas}
Let $\lambda^{\sigma}_{1} < 0$, and suppose that \eqref{AHSyss} holds. Moreover, assume that the hypotheses of Theorem~\ref{ASASiid} are satisfied. 
Then weak synchronization holds for~\eqref{MAIN}.
\end{corollary}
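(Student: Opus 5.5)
The plan is to reduce Corollary \ref{KMNas} to Theorem \ref{WESYN}. This means checking Assumption \ref{AASQw98a1} from the earlier results, and then invoking Lemma \ref{SYNCH}. Assumption \ref{AASQw98a} is part of the standing hypotheses of this regime (cf. Proposition \ref{EXASA}). Condition \eqref{AHSyss} is assumed. So only the two items of Assumption \ref{AASQw98a1} remain.

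For item 1), I will use the hypotheses of the second statement of Theorem \ref{ASASiid}: $\lambda_1=\lambda_1^\sigma<0$ and the Hölder condition \eqref{DEEE}. Proposition \ref{descrte} then gives $p\in\N$ and measurable $b_i:\tilde{\mathcal B}\to\R^d$ with $\mu_{\omega^-}=\frac1p\sum_{i}\delta_{b_i(\omega^-)}$, so $\mathrm{supp}(\mu_{\omega^-})=\{b_i(\omega^-)\}_{1\le i\le p}$. The points may be taken distinct, since the weights are equal to $1/p=\mathcal H$.

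For item 2), I will apply Lemma \ref{JHJHJH}, which rests only on the dissipativity in Assumption \ref{Drift} 2). It yields a deterministic $\overline C$ such that $\mu_{\omega^-}\times\mu_{\omega^-}$ charges $\{|x-y|>\overline C\}$ with zero $\mathbf P$-average. Corollary \ref{ATRTASs} 3) turns this into $\mathcal R(\omega^-)\le C$ on a full-measure set $\overline{\mathcal B}\subseteq\tilde{\mathcal B}$. The $\theta_t$-invariant full-measure set $\tilde\Omega$ is then obtained exactly as in Remark \ref{IAKSOded}:
\begin{itemize}
\item first intersect the images under $\theta_{k/2^j}$ over dyadic times, which is a countable intersection of full-measure sets by $\mathbb P$-invariance of $\theta$;
\item then extend to all $t$ using continuity of $t\mapsto\mathcal R(P_t(\omega^-,\omega^+))$, which follows from the equivariance $\Phi^t_\omega\{b_i(\omega^-)\}=\{b_i(P_t(\omega^-,\omega^+))\}$ given by Lemma \ref{RAND_MEASURE} 5) and continuity of the cocycle.
\end{itemize}
One subtle point here is that $\Phi^t_\omega$ is injective. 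This guarantees that the image still has exactly $p$ atoms, so $\mathcal R$ is computed consistently along orbits.

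Having verified all hypotheses, Theorem \ref{WESYN} gives $\mathbf P(\mathcal R(\omega^-)<\eta)=1$ for every $\eta>0$. Intersecting over $\eta=1/m$ gives $\mathcal R=0$ almost surely, as in the end of the proof of Lemma \ref{SYNCH}. Since the $b_i$ are distinct atoms, $p=1$, and $\mathrm{supp}(\mu_{\omega^-})=\{b(\omega^-)\}$ with $b:=b_1$. This is Definition \ref{DEF:WEAK}.

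I expect the main obstacle to be item 2). Specifically, I need the set on which $\mathcal R\le C$ to be genuinely $\theta_t$-invariant for all real $t\ge0$ and not merely almost surely for each fixed $t$. This is what the continuity argument of Remark \ref{IAKSOded} is for, and I would write it out carefully there, in particular the continuity of $t\mapsto\mathcal R(P_t\omega)$ via the ordering-free maximum over pairs. A secondary check is that the constant $C$ entering $A_1,A_2$ in \eqref{AHSyss} is the same constant as the one produced by Lemma \ref{JHJHJH}. Proposition \ref{EXASA} uses it only through $B(0,R+C)$, so this is consistent.
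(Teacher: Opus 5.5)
Your proposal is correct and follows the paper's route. You verify Assumption~\ref{AASQw98a1} from Proposition~\ref{descrte} and Lemma~\ref{JHJHJH} (packaged as Corollary~\ref{ATRTASs}) together with the invariance construction of Remark~\ref{IAKSOded}, then conclude with Theorem~\ref{WESYN}; your extra remarks make explicit details the paper leaves tacit, namely that injectivity of $\Phi^t_\omega$ keeps exactly $p$ equally weighted atoms along orbits and that Assumption~\ref{AASQw98a} is implicit in \eqref{AHSyss}.
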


\proof	Thanks to Corollary~\ref{ATRTASs} and Remark~\ref{IAKSOded}, 
one can verify Assumption~\ref{AASQw98a1}. The claim is derived from Theorem~\ref{WESYN}.
\qed

\begin{example}
We provide an example of a drift term $F$ that satisfies the conditions of Proposition \ref{EXASA} and Corollary \ref{KMNas}. To this aim we consider a $C^{2}$-function $F$ defined for some $0 < R_{1} < R_{2}$ as
\begin{align*}
	F(x)=\begin{cases} &
		x - x|x|^{2}, ~~~~~  |x| < R_{1}, \\
		& H(x) - x, ~~~~~ |x| > R_{2},  
	\end{cases}
\end{align*}
where $H : \mathbb{R}^{d} \to \mathbb{R}^{d}$ is a $C^{2}$ function with bounded derivatives satisfying $\|DH\| < 1$. 
The function $F$ can then be defined on the annulus $R_{1} \le |x| \le R_{2}$ in such a way that $F$ is globally of class $C^{2}$. 
\end{example}
\section{Conclusion and Outlook}\label{outlook} 
In this work we showed weak synchronization for SDEs driven by additive fractional Brownian motion, where the drift is eventually strictly monotone and the noise intensity is large.~We conclude by highlighting several open problems that we intend to address in forthcoming works.

\begin{itemize}
\item (Relaxing Assumption \ref{AHSyss}) The property
\begin{align}\label{cond}
	\lim_{n\to\infty}
	\mathcal{R}\!\left(P_{n}(\omega^-,\omega^+)\right)
	=0,
	\qquad \mathbb{P}\text{-a.s.}
\end{align}
is expected to hold without assuming~\eqref{AHSyss}. To this end, we briefly outline how one can prove that
\begin{align}\label{BNassss}
	\liminf_{n\to\infty}
	\mathcal{R}\!\left(P_{n}(\omega^-,\omega^+)\right)
	=0,
	\qquad \mathbb{P}\text{-a.s.}
\end{align} 
The main idea is to relate the dynamics of~\eqref{MAIN} to the dynamics of the ODE
\[
\mathrm{d}Y_t = F(Y_t)\,\mathrm{d}t + \sigma\,\mathrm{d}h^v(t), \qquad Y_0 = x \in \mathbb{R}^d,
\]
where 
$h^v(t) = v t \sigma^{-1} e_1
$
is a deterministic control. 
For \(v\) sufficiently large, the control dominates the drift \(F\), and the solution spends most of the time outside the ball \(\overline{B(0,R+C)}\). In this region, the monotonicity condition implies exponential contraction of trajectories.~By the support theorem (e.g.~\cite[Prop.~5.8]{HO07}), there exists a set of noise realizations of positive probability such that the SDE~\eqref{MAIN} follows the  dynamics of the controlled ODE, so that solutions remain mostly outside \(\overline{B(0,R+C)}\) and hence become closer.
Finally, using the invariance relation
\[
\Phi^{t}_{\omega}\{b_i(\omega^-)\}_{i=1}^p
=
\{b_i(P_t(\omega^-,\omega^+))\}_{i=1}^p,
\]
together with Poincar\'e's recurrence, such favorable regimes occur infinitely often along typical trajectories which yields \eqref{BNassss}.
However, verifying~\eqref{cond} without assuming~\eqref{AHSyss} would require additional probabilistic and ergodic arguments.
\item (Small noise regime) From Proposition \ref{EXASA} and Corollary \ref{KMNas}, it follows that both the drift and diffusion coefficients contribute to synchronization. 
Furthermore, based on an interplay between drift and diffusion, 
there might exist positive Lyapunov exponents, which would imply a chaotic behavior of the underlying system.~This situation will be analyzed in a small noise regime, in contrast to the situation explored here.
\item (Negativity of top Lyapunov exponent) For the SDE \eqref{MAIN} in dimension one, weak synchronization in the sense of \cite{FGS17} holds, meaning that the weak point attractor $\cA(\omega^-)$ is a singleton. We expect that the top Lyapunov exponent is negative in this case.
\item (Speed of convergence towards the singleton) For small values of $H$, the noise exhibits more rapid fluctuations.~Intuitively, these fluctuations may facilitate transitions of trajectories from the expansive region $B(0,R)$ to the contracting region $\mathbb{R}^d \setminus \overline{B(0,R)}$. As a result, trajectories observed over a sufficiently long time interval $[0,T]$ should synchronize faster than for large values of the Hurst parameter. In this case, we expect that trajectories spend more time in the expansive region.~Moreover, the drift itself tends to drive trajectories from the contracting region back into the expansive region.~Hence, the trajectories that enter the expansive region are likely to remain there for longer periods. Consequently, the synchronization rate is expected to be slower.
\begin{figure}[h]
	\centering
	\begin{tikzpicture}[scale=.85]
		\tikzset{
			region/.style={gray!15},
			traj_small/.style={red, very thick},
			traj_large/.style={red, very thick},
			arrowmark/.style={postaction={decorate},
				decoration={markings,
					mark=at position 0.15 with {\arrow{>}},
					mark=at position 0.30 with {\arrow{>}},
					mark=at position 0.45 with {\arrow{>}},
					mark=at position 0.60 with {\arrow{>}},
					mark=at position 0.75 with {\arrow{>}},
					mark=at position 0.90 with {\arrow{>}}
				}
			}
		}
		
		
		\node[font=\large] at (-5,3) {Small $H$};
		
		\fill[region] (-5,0) circle (2);
		\draw[thick] (-5,0) circle (2);
		
		\node at (-5,-2.8) {Contracting region};
		
		\draw[traj_small, arrowmark]
		plot[smooth,tension=0.9] coordinates {
			(-7.2,1)
			(-3,2.9)
			(-6.8,-2.5)
			(-5.2,2.5)
			(-4.2,-2.3)
			(-2,2.6)
			(-5.0,-2.3)
			(-4,1.5)
			(-7.0,-1.0)
		};
		
		
		\node[font=\large] at (5,3) {Large $H$};
		
		\fill[region] (5,0) circle (2);
		\draw[thick] (5,0) circle (2);
		
		\node at (5,-2.8) {Contracting region};
		
		\draw[traj_large, arrowmark]
		plot[smooth,tension=1.2] coordinates {
			(1.8,1.6)
			(2.8,1.2)
			(3.8,1.8)
			(4.8,2.1)
			(6.0,1.4)
			(6.8,0.3)
			(6.1,-0.9)
			(5.0,-1.8)
			(3.8,-1.3)
			(2.8,-0.3)
		};
		
		\fill[red] (1.8,1.6) circle (1.5pt);
		\fill[red] (2.8,-0.3) circle (1.5pt);
		
	\end{tikzpicture}
	\caption{The trajectories of \eqref{MAIN} for small and large values of the Hurst parameter.} 
\end{figure}
\end{itemize}

\appendix
\section{A measure theoretical result}\label{appendix}
The following result allows us to construct a measurable selection map which is required for the proof of Corollary \ref{STABSS}.
\begin{lemma}\label{MESUA}
Assume that \( \mathcal{Z} \) is a Polish space endowed with its Borel \( \sigma \)-algebra, and let \( A \subseteq \mathbb{R}^d \times \mathcal{Z} \) be a compact set. For each \( z \in \mathcal{Z} \), define the fiber
\[
A(z) := \{ y \in \mathbb{R}^d \mid (y, z) \in A \}.
\]
Then, there exists a Borel-measurable function
\[
K : \Pi_{\mathcal{Z}}(A) \longrightarrow \mathbb{R}^d,
\]
such that \( K(z) \in A(z) \) for every \( z \in \Pi_{\mathcal{Z}}(A) \). Moreover, this map admits a Borel-measurable extension \( \tilde{K} : \mathcal{Z} \to \mathbb{R}^d \).
\end{lemma}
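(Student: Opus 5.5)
The claim is a measurable selection for a compact set $A\subseteq\mathbb{R}^d\times\mathcal{Z}$, so I will build $K$ by hand as a lexicographic minimum. First, $\Pi_{\mathcal{Z}}(A)$ is compact, being the continuous image of a compact set, and hence Borel. For each $z\in\Pi_{\mathcal{Z}}(A)$ the fiber $A(z)$ is non-empty, and it is compact because it is homeomorphic to the closed set $A\cap(\mathbb{R}^d\times\{z\})$ of the compact space $A$. I define $K(z)$ coordinate by coordinate. Set
\[
K_1(z):=\min\{y_1: y\in A(z)\},
\]
and then inductively
\[
K_k(z):=\min\{y_k: y\in A(z),\ y_i=K_i(z) \text{ for } i<k\}.
\]
Each of these sets is a non-empty compact slice of $A(z)$, so every minimum is attained. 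The resulting point $K(z)$ lies in $A(z)$: it is the lexicographically smallest element of the fiber.

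The main step is Borel measurability of $K$. I will show inductively that each $K_k$ is lower semicontinuous on $\Pi_{\mathcal{Z}}(A)$ once it is restricted to the set where the earlier coordinates have been fixed; a cleaner way is to work with the sets $A_k$ directly. Let $A_0:=A$ and
\[
A_k:=\{(y,z)\in A_{k-1}: y_k=K_k(z)\}.
\]
For $k=1$, the map $z\mapsto K_1(z)$ is lower semicontinuous on the compact set $\Pi_{\mathcal{Z}}(A)$. Indeed, take $z_n\to z$ with $K_1(z_n)\to c$, and pick $y^n\in A(z_n)$ with $y^n_1=K_1(z_n)$. By compactness of $A$ a subsequence of $(y^n,z_n)$ converges to some $(y,z)\in A$, so $K_1(z)\le y_1=c$. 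Hence $K_1$ is Borel.

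For the later coordinates the sets $A_k$ need not be closed, so compactness is lost. Instead I will use an Effros-type argument: a Borel set whose sections are all compact and non-empty admits a Borel minimum selector, and the map $z\mapsto\min\{y_k:(y,z)\in B\}$ is Borel. To prove the latter, write
\[
\{z: \min\{y_k:(y,z)\in B\}<q\}=\Pi_{\mathcal{Z}}\bigl(B\cap\{y_k<q\}\bigr).
\]
Projections of Borel sets with $\sigma$-compact sections are Borel by the Arsenin–Kunugui theorem. I expect this step to be the main obstacle. The alternative, avoiding heavy descriptive set theory, is to cite the Kuratowski–Ryll-Nardzewski selection theorem applied to the compact-valued multifunction $z\mapsto A(z)$. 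This multifunction is upper semicontinuous, since its graph is compact, and therefore weakly measurable: for open $U$,
\[
\{z: A(z)\cap U\ne\emptyset\}=\bigcup_n\Pi_{\mathcal{Z}}\bigl(A\cap(\overline{U_n}\times\mathcal{Z})\bigr),
\]
a countable union of compact sets, where the $U_n$ are compact sets exhausting $U$. Kuratowski–Ryll-Nardzewski then gives the Borel selector $K$ directly, and I would present this as the actual proof, keeping the lexicographic construction only as motivation.

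Finally, the extension $\tilde K$ is obtained by setting $\tilde K(z):=K(z)$ on $\Pi_{\mathcal{Z}}(A)$ and $\tilde K(z):=0$ on its complement. This is Borel because $\Pi_{\mathcal{Z}}(A)$ is a closed, and in particular Borel, subset of $\mathcal{Z}$.
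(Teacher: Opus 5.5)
Your final argument is correct and is essentially the paper's proof. Both arguments establish weak measurability of $z\mapsto A(z)$ by writing an open $U$ as a countable union of compact sets, so that $\{z: A(z)\cap U\neq\emptyset\}$ becomes a countable union of compact (hence closed) projections; both then invoke Kuratowski--Ryll-Nardzewski and extend by $0$ off the closed set $\Pi_{\mathcal{Z}}(A)$, which makes the lexicographic construction and the Arsenin--Kunugui detour unnecessary.
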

\proof
Note that since \( A \) is a compact subset of the Polish space \( \mathbb{R}^d \times \mathcal{Z} \), it is sequentially compact, so  \( \Pi_{\mathcal{Z}}(A)\) is closed. Also, for each \( z \in \mathcal{Z} \), the fiber
\[
A(z) := \{ y \in \mathbb{R}^d \mid (y, z) \in A \}
\]
is a compact subset of \( \mathbb{R}^d \). We claim that the family of compact sets \( \{ A(z) \}_{z \in \mathcal{Z}} \) is weakly measurable, meaning that for every open set \( U \subseteq \mathbb{R}^d \), the set
\[
\{ z \in \mathcal{Z} \mid A(z) \cap U \neq \emptyset \} = \Pi_{\mathcal{Z}} \big( A \cap ( U \times\mathcal{Z} ) \big)
\]
belongs to \( \sigma(\mathcal{Z}) \). 
To verify this, observe that every open set \( U \subseteq \mathbb{R}^d \) can be written as a countable union \( U = \bigcup_{n \geq 1} K_n \), where each \( K_n \) is a compact subset of \( \mathbb{R}^d \). Then,
\[
\Pi_{\mathcal{Z}} \big( A \cap (U \times\mathcal{Z}) \big) = \bigcup_{n \geq 1} \Pi_{\mathcal{Z}} \big( A \cap ( K_n\times\mathcal{Z}) \big).
\]
Each set \( \Pi_{\mathcal{Z}} \big( A \cap ( K_n\times\mathcal{Z}) \big) \) is closed in \( \mathcal{Z} \), which can be shown using a standard sequential compactness argument: a convergent sequence of points from this projection has a limit that must also lie in this set, due to the compactness of both \( A \) and \( K_n \). 
Thus, the weak measurability of the family \( \{A(z)\}_{z \in \mathcal{Z}} \) is established. Moreover, since \( \Pi_{\mathcal{Z}}(A)\) is closed, we have
\[
\sigma\left(\Pi_{\mathcal{Z}}(A)\right) = \sigma(\mathcal{Z}) \cap \Pi_{\mathcal{Z}}(A),
\]
implying that \( \sigma\left(\Pi_{\mathcal{Z}}(A)\right) \subseteq \sigma(\mathcal{Z}) \).
By the Kuratowski-Ryll-Nardzewski measurable selection theorem~\cite[Theorem 18.13]{BC06}, we can therefore construct a Borel-measurable function
\[
\mathcal{K} : \Pi_{\mathcal{Z}}(A) \longrightarrow \mathbb{R}^d
\]
such that \( \mathcal{K}(z) \in A(z) \) for every \( z \in \Pi_{\mathcal{Z}}(A) \). Furthermore, we may extend this function to a Borel-measurable map \( \tilde{\mathcal{K}} : \mathcal{Z} \to \mathbb{R}^d \) by defining
\[
\tilde{\mathcal{K}}(z) := 
\begin{cases}
\mathcal{K}(z), & \text{if } z \in \Pi_{\mathcal{Z}}(A), \\
0, & \text{otherwise}.
\end{cases}
\]
The closedness of \( \Pi_{\mathcal{Z}}(A) \) ensures that \( \tilde{\mathcal{K}} \) is Borel-measurable.
\qed
\bibliographystyle{alpha}
\bibliography{refs}

\end{document}